\numberwithin{equation}{section}
\newtheorem{theorem}{Theorem}[section]
\newtheorem{corollary}[theorem]{Corollary}
\newtheorem{lemma}[theorem]{Lemma}
\newtheorem{remark}[theorem]{Remark}
\def \inv{^{-1}}
\def  \supp{\mbox{supp}}
\def  \inj{\mbox{inj}}
\def \fc {\mathfrak{c}}
\def \half{\frac{1}{2}}
\def \p{\partial}
\def  \wkr {W^{k,2,\alpha}_{r,u_{(r)}} }
\def  \cwk {\mathcal{W}^{k,2,\alpha}_{u} }
\def  \wka {W^{k,2,\alpha}_{u} }
\def  \lka {L_{u}^{k-1,2,\alpha} }
\def \lkar {L^{k-1,2,\alpha}_{r,u_{(r)}} }
\def \ker {\mbox{ker}}
\def \E{{\mathcal E}}
\def \v {\vskip 0.1in}
\def \n {\noindent}
\def \im {\mbox{im}}
\begin{document}

  \begin{center}
 {\LARGE \bf The Exponential Decay of Gluing Maps \\ for $J$-Holomorphic map Moduli Spaces}
\v
  {\large An-Min Li and Li Sheng}\footnote{partially  supported by a NSFC grant}
   \footnote{anminliscu@126.com, lshengscu@gmail.com}

{Department of Mathematics, Sichuan University
        Chengdu, PRC}
        \end{center}

\v\v
\begin{abstract}
We prove the exponential decay of the derivative of the gluing maps with respect to the gluing parameter.
\end{abstract}

\section{\bf Introduction and Preliminary}\label{introduction}

The gluing analysis plays an important role in Gromov-Witten Invariants theory. In this paper we study the gluing estimates, in particular the estimates of the derivatives of the gluing maps with respect to the gluing parameter $r$. We describe now the problem and state our main result. We only consider the case of gluing one nodal, for general cases the estimates are the same.
\v
\subsection{\bf $J$-holomorphic maps from Riemann surface with one nodal point}\label{s_intro_1}
Let $(M,\omega,J)$ be a closed $C^{\infty}$ symplectic manifold of dimension $2m$ with $\omega$-tame almost complex structure $J$, where $\omega$ is a symplectic form. Then there is a Riemannian metric
\begin{equation}\label{definition_of_metrics}
G_J(v,w):=<v,w>_J:=\frac{1}{2}\left(\omega(v,Jw)+\omega(w,Jv)\right)
\end{equation}
for any $v, w\in TM$.  Following \cite{MS} we choose the complex linear connection
 $$
 \widetilde {\nabla}_{X}Y=\nabla_{X}Y-\tfrac{1}{2}J\left(\nabla_{X}J\right)Y $$
 induced by the Levi-Civita connection $\nabla$ of the metric $G_{J}.$ Let $(\Sigma,j)$ be a smooth Riemann surface. A map $u:\Sigma\longrightarrow M$ is called a $(j,J)$-holomorphic map if $du\circ j=J\circ du$. Alternatively
\begin{equation}\label{holo}
\bar\p_{j,J}(u):=\half\left(du + J(u) du\circ j\right)=0.
\end{equation}
Let $(\Sigma, j,{\bf y}, q)$ be a marked nodal Riemann surface of genus $g$ with $n$ marked points ${\bf y}=(y_1,...,y_n)$ and one nodal point $q$.  We write the marked nodal Riemann surface as
\begin{equation}\label{rie}
\left[\Sigma=\Sigma_{1}\wedge\Sigma_{2},j=(j_{1},j_{2}),{\bf y}=({\bf y}_{1},{\bf y}_{2}), q=(p_1,p_2)\right],\end{equation}
 where $(\Sigma_{i},j_{i},{\bf y}_{i}, q_i)$, $i=1,2$, are smooth Riemann surfaces of genus $g_i$ with $n_i$ marked points $\mathbf y_{i}$ and puncture $q_i$. We say that $q_1,q_2$ are paired to form $q$.  We assume that $(\Sigma_{i},j_{i},{\bf y}_{i},q_{i})$ is stable, i.e., $n_{i}+1>2-2g_{i}$, $i=1,2$. Let $z_i$ be a local complex coordinate around $q_i$, $z_i(q_i)=0$, $i=1,2$.
 Let
\begin{equation}\label{cylind coord}
z_1=e^{-s_1 - 2\pi \sqrt{-1} t_1},\;\;\;z_2=e^{s_2 +2\pi\sqrt{-1}  t_2}.
\end{equation}
$(s_i, t_i)$ are called the holomorphic cylindrical coordinates near $q_{i}$.  In terms of the holomorphic cylindrical coordinates we write
\begin{align}\label{rie cylind coord}
&\stackrel{\circ}{\Sigma}_{1}:=\Sigma_1\setminus\{q_1\}\cong\Sigma_{10}\cup\{[0,\infty)\times S^1\},\\
&\stackrel{\circ}{\Sigma}_{2}:=\Sigma_2\setminus\{q_2\}\cong\Sigma_{20}\cup\{(-\infty,0]\times S^1\}.\nonumber
\end{align}
Here $\Sigma_{i0}\subset \Sigma_i$, $i=1,2$, are compact surfaces with boundary. Put $\stackrel{\circ}{\Sigma}= \Sigma\setminus\{q_1,q_2\} =\stackrel{\circ}{\Sigma}_{1}\cup\stackrel{\circ}{\Sigma}_{2}$. We introduce the notations
\begin{equation}\label{D(R)}
\Sigma_i(R_0)=\Sigma_{i0}\cup \{(s_i,t_i)|\;|s_i| \leq R_0\},\;\; \;\;\;\;\;\Sigma(R_0)=\Sigma_1(R_0)\cup \Sigma_2(R_0).
\end{equation}
We choose a local coordinate system $(a_{1},a_{2})\in \mathbf{A}_1\times \mathbf{A}_2$ for complex structure on $\stackrel{\circ}{\Sigma}_{1}\cup\stackrel{\circ}{\Sigma}_{2}$ , where $\mathbf{A}_i\subset \mathbb R^{6g_{i}-6+2n_{i}}$ is an open set, and
$a_i=(j_{i},{\bf y}_{i}).$
\v
For any gluing parameter $(r,\tau)$ with $r\geq R_{0}$ and $\tau\in S^1$ we construct a surface
$\Sigma_{(r)} =\Sigma_1 \#_{(r)} \Sigma_2 $ as follows, where and later we
use $(r)$ to denote gluing parameters. We cut off the
part of $\Sigma_i$ with cylindrical
coordinate $|s_i|>\frac{3r}{2}$ and glue the remainders along
the collars of length $r$ of the cylinders with the gluing formulas:
\begin{equation}\label{gluing rie}
s_1=s_2 + 2r,\;\;\;t_1=t_2 + \tau.
\end{equation}
$\Sigma(R_{0})$ can naturally equate to the subset of $\Sigma_{(r)}$.
Then $((a_1,a_2),r, \tau)$ is a local coordinate system near $(\Sigma, j,{\bf y}, q)$ in the Teichm\"uller space $\mathbf{T}_{g,n}$.
  For  any $a=(a_{i},a_{i})\in \mathbf{A}_{1}\times \mathbf{A}_{2}$ with $a_{i}=(j_{i},\mathbf{y}_{i}),$  let $j_{(r),a}$ be the complex structure on $\Sigma_{(r)}$ satisfying
$$
j_{(r),a}|_{\Sigma_{i}(R_{0})}=j_{i},\;\;\;\;\;j_{(r),a}\left.\frac{\p}{\p s_{i}}\right|_{\Sigma_{(r)}\setminus \Sigma(R_{0})}=\left.\frac{\p }{\p t_{i}}\right|_{\Sigma_{(r)}\setminus \Sigma(R_{0})}
$$
where $z=e^{-r-2\pi \sqrt{-1}\tau}$. If no danger of confusion we  denote $j_{(r),a}$ by $j_{a}.$
\v
We may choose a family  of metrics $\mathbf{g}_i$ on $ \stackrel{\circ}{\Sigma}_i $ in the given conformal class $j_i$, depending on $a_i\in \mathbf{A}_i$ smoothly, such that, restricting to $\Sigma\setminus \Sigma(R_{0})$,
$$\mathbf{g}_i=(ds_i)^2+(dt_i)^2,$$ the standard cylinder metric. Then we define a metric $\mathbf{g}$ on $\stackrel{\circ}{\Sigma}$ as $\mathbf{g}=\mathbf{g}_1\oplus \mathbf{g}_2.$
\v
Let $u=(u_1,u_2)$, where $u_i:\Sigma_i\to M$ is $(j_i,J)$-holomorphic map such that $u_1(q_1)=u_2(q_2)$.
We choose the local normal coordinates $(x^{1},\cdots,x^{2m})$ in a neighborhood  $O_{u(q)}$ of $u(q)$ such that
$$
(x^{1},\cdots,x^{2m})(u(q))=0,\;\;\;\;\;\left.J(0)\frac{\p}{\p x^{i}}\right|_{0}=\left.\frac{\p}{\p x^{m+i}}\right|_{0},\;\;\;\;\;\;\;
\left.J(0)\frac{\p}{\p x^{m+i}} \right|_{0}=\left.-\frac{\p}{\p x^{i}}\right|_{0},\;\;\;\;\;\;i\leq m.
$$
In terms of the holomorphic cylindrical coordinates $(s_i,t_i)$ over each tube the linearized operator
$D_{u_i}$ takes the following form ( see Appendix
\ref{Linearized operator})
\begin{equation}
D_{u_i}=\frac{\partial}{\partial
s_i}+J_0\frac{\partial} {\partial t_i}+F_{u_i}^{1}+F_{u_i}^{2}\frac{\partial} {\partial t_i},
\end{equation}
where  $\sum\limits_{p+q=d} \left|\frac{\p^{d}F^{l}_{u_i}}{\partial s_{i}^ p \partial  t_{i}^q}\right|\rightarrow 0,$ for $l,i=1,2,\forall d\geq0$, exponentially and
uniformly in $t_i$ as $|s_i|\rightarrow \infty $. Here $J_{0}$ is the standard complex structure in $O_{u(q)}$ such that
$ J_{0}\frac{\p}{\p x^{i}} = \frac{\p}{\p x^{m+i}},\;\;
 J_{0}\frac{\p}{\p x^{m+i}} =-\frac{\p}{\p x^{i}}.$
 Therefore, the operator $H_{s_i}=J_0\frac{d}{dt_i}+ F_{u_i}^{1}+F_{u_i}^{2}\frac{\partial} {\partial t_i}
$ converges
to $H_{\infty}=J_0\frac{d}{dt_i}$.  Obviously, the operator $D_{u_i}$ is not Fredholm operator because over nodal end the
operator $H_{\infty}=J_0\frac{d}{dt_i}$ has zero eigenvalue. The $\ker
H_{\infty}$ consists of constant vectors in $T_{u(q)}M$.
To recover a Fredholm theory we use weighted spaces $W^{k,2,\alpha}(u^*TM)$. In this paper we take $k> 2.$
Fix a positive function $W$ on $\Sigma$ which has order equal
to $e^{\alpha |s|} $ on each end, where $\alpha$ is a small
constant such that $0<\alpha  <1$ and over each end
$H_{\infty}- \alpha = J_0\frac{d}{dt}- \alpha $ is invertible. We
will write the weight function simply as $e^{\alpha |s|}.$ For any
section $h\in C_{c}^{\infty}(\stackrel{\circ}{\Sigma};u^{\ast}TM)$ and section $\eta \in
C_{c}^\infty(\stackrel{\circ}{\Sigma}, u^{*}TM\otimes \wedge^{0,1}_jT^{*}\stackrel{\circ}{\Sigma})$ we define the norms
\begin{eqnarray}\label{norm_1_p_alpha}
&&\|h\|_{k,2,\alpha}=
 \left(\int_{\Sigma}e^{2\alpha|s|} \sum_{i=0}^{k} |\nabla^i h|^2
dvol_{\Sigma}\right)^{1/2},\\
\label{norm_p_alpha}
&&\|\eta\|_{k-1,2,\alpha}= \left(\int_{\Sigma}e^{2\alpha|s|}\sum_{i=0}^{k-1} |\nabla^i \eta|^2
dvol_{\Sigma}\right)^{1/2}.
\end{eqnarray}
Denote by $W^{k,2,\alpha}(\stackrel{\circ}{\Sigma};u^{\ast}TM)$ and
$W^{k-1,2,\alpha}(\stackrel{\circ}{\Sigma}, u^{*}TM\otimes \wedge^{0,1}_jT^{*}\stackrel{\circ}{\Sigma})$ the complete spaces with respect to the norms
\eqref{norm_1_p_alpha} and \eqref{norm_p_alpha}
respectively. The operator $$D_u:   W^{k,2,\alpha}(\stackrel{\circ}{\Sigma};u^{\ast}TM)\rightarrow W^{k-1,2,\alpha}(\stackrel{\circ}{\Sigma}, u^{*}TM\otimes \wedge^{0,1}_jT^{*}\stackrel{\circ}{\Sigma})$$
is a Fredholm operator as long as $\alpha$ does not lie in the spectrum
of the operator $H_{\infty}$.
\vskip 0.1in
\noindent
\v

We choose $R_0$ so large that  $u(\{|s_i|\geq \frac{r}{2}\})$ lie in $O_{u(q)}$ for any $r>R_0$. In this coordinate system we identify $T_xM$ with $T_{u(q)}M$ for all $x\in O_{u(q)}$. Any $h_0\in T_{u(q)}M$ may be considered as a vector field in the coordinate neighborhood.
We fix a smooth cutoff function $\varrho$:
\[
\varrho(s)=\left\{
\begin{array}{ll}
1, &\mbox{ if }\ |s|\geq \bar{d}, \\
0, &\mbox{ if }\ |s|\leq \frac{\bar{d}}{2}
\end{array}
\right.
\]
where $\bar{d}$ is a large positive number. Put
$$\hat{h}_0=\varrho h_0.$$
Then for $\bar{d}$ large enough $\hat{h}_0$ is a section in $C^{\infty}(\stackrel{\circ}{\Sigma}; u^{\ast}TM)$
supported in the tube $\{(s,t)||s|\geq \frac{\bar{d}}{2}, t \in {S} ^1\}$.
Denote
$${\mathcal W}^{k,2,\alpha}(\stackrel{\circ}{\Sigma};u^{\ast}TM)=\{h+\hat{h}_0 | h \in
W^{k,2,\alpha}(\stackrel{\circ}{\Sigma};u^{\ast}TM),h_0 \in \ker H_{\infty}\}.$$
 We define weighted Sobolev  norm  on ${\mathcal W}^{k,2,\alpha}$ by $$\| h+\hat{h}_{0}\|_{\mathcal{W},k,2,\alpha}=
 \|h\|_{k,2,\alpha} + |h_{0}| ,$$
 where $|h_{0}|=[G_{J}(h_{0},h_{0})_{u(q)}]^{\frac{1}{2}}$.
 If no danger of confusion, later we will denote
$$\mathcal{W}^{k,2,\alpha}_{u}=\mathcal{W}^{k,2,\alpha}(\stackrel{\circ}{\Sigma},u^*TM),\;\;\; W^{k,2,\alpha}_{u}= W^{k,2,\alpha}(\stackrel{\circ}{\Sigma},u^*TM),\;\;\;L_{u}^{k-1,2,\alpha}=W^{k-1,2,\alpha}(\stackrel{\circ}{\Sigma}, u^{*}TM\otimes \wedge^{0,1}_{j_o}T^{*}\stackrel{\circ}{\Sigma}).
$$
Let
$$\mathcal{B}_i=\{ v_{i}=\exp_{u_{i}}(h_{i})|\;h_{i}\in \mathcal{W}^{k,2,\alpha}_{u_i}\},$$
$$\mathcal{B}_{1}\times_{q}\mathcal B_{2}:=\left\{v=(v_1,v_2)\in \mathcal{B}_{1}\times \mathcal B_{2} \;|\;v_1(q_1)=v_2(q_2) \right\}.$$
For any $\rho>0$ set
\begin{equation}\label{eqn_g10}
O_{b_{o}}(\rho ):=\{v\in \mathcal{B}_{1}\times_{q} \mathcal{B}_{2} \;|\; v=\exp_{u}(h+\hat h_{0}),\;\|h+\hat h_{0}\|_{\mathcal W,k,2,\alpha}<\rho  \}.
\end{equation}
Let $\mathcal E_i$ be the infinite dimensional Bananch bundle over $\mathbf{A}_i\times\mathcal{B}_i$, whose fiber at $(a_i,v_i)$ is $W^{k-1,2,\alpha}(\stackrel{\circ}{\Sigma}, v_{i}^{*}TM\otimes \wedge^{0,1}_{j_{a_{i}}}T^{*}\stackrel{\circ}{\Sigma}).$.
We have a
Fredholm system $(\mathbf{A}_i\times\mathcal{B}_i, \mathcal{E}_i, \overline{\p}).$
We will fix a complex structure $a_o=(a_{o1}, a_{o2})$.

\subsection{\bf Pregluing}\label{s_intro_2}
Let $b_{oi}=(a_{oi}, u_{i}),i=1,2,$ where  $a_{oi}\in \mathbf{}_{i}$, $u_{i}:\Sigma_i\rightarrow M$ are $(j_{oi},J)$-holomorphic maps with $u_{1}(q_1)=u_{2}(q_2)$.  Denote $b_o=(b_{o1},b_{o2})=(a_o,u)=(j_o, {\bf y}_o, u)$. Let $r>4R_0.$ We glue the map $(u_1,u_2)$ to get a pregluing maps $u_{(r)}$, a family of approximate $(j_o,J)$-holomorphic maps as follows.
Set\\
\[
u_{(r)}=\left\{
\begin{array}{ll}
u_1 \;\;\;\;\; on \;\;\Sigma_{10}\bigcup\{(s_1,t_1)|0\leq s_1 \leq
\frac{r}{2}, t_1 \in S^1 \}    \\  \\
u_1(q)=u_2(q) \;\;\;\;\;
on \;\;\{(s_1,t_1)| \frac{3r}{4}\leq s_1 \leq
\frac{5r}{4}, t_1 \in S^1 \}  \\   \\
u_2 \;\;\;\;\; on \;\;\Sigma_{20}\bigcup\{(s_2,t_2)|0\geq s_2
\geq - \frac{r}{2}, t_2 \in S^1 \}     \\
\end{array}.
\right.
\]
To define the map $u_{(r)}$ in the remaining part we fix a smooth cutoff
function $\beta : {\mathbb{R}}\rightarrow [0,1]$ such that
\begin{equation}\label{def_beta}
\beta (s)=\left\{
\begin{array}{ll}
1 & if\;\; s \geq 1 \\
0 & if\;\; s \leq 0
\end{array}
\right.
\end{equation}
 and $\sqrt{1-\beta^2}$ is a smooth function,  $0\leq \beta^{\prime}(s)\leq 4$ and $\beta^2(\frac{1}{2})=\frac{1}{2}.$
 We define\\
$$u_{(r)}= u_1(q)+ \beta\left(3-\frac{4s_1}{r}\right)(u_1(s_1,t_1)-u_1(q)) +
\beta\left(\frac{4s_1}{r}-5\right)(u_2(s_1-2r,t_1-\tau)- u_2(q)).$$
\vskip 0.1in
\noindent
We introduce a notation $\beta_{i;R}(s_i)$. For any $R>0$ denote \begin{equation}\label{beta}
\beta_{1;R}(s_1)=\beta\left(\frac{1}{2}+\frac{r-s_1}{R}\right),\;\;\;\;\beta_{2;R}(s_{2})=\sqrt{1-\beta^2\left(\frac{1}{2}-\frac{s_{2}+r}{R}\right)} , \end{equation}
 where $\beta$ is the cut-off function defined in \eqref{def_beta}.
 Then we have
\begin{equation}\label{beta_rel.}\beta_{2;R}^2(s_1- 2r)=\left(1-\beta^2(\frac{1}{2}-\frac{s_{1}-r}{R})\right)=1-\beta_{1;R}^2(s_1).
\end{equation}
For any $\eta \in
C^{\infty}(\Sigma_{(r)};u_{(r)}^{\ast}TM\otimes \wedge_{j_{o}}^{0,1}T\Sigma_{(r)})$,
let
\begin{equation}\label{eqn_s.n.}
\eta_{i}(p) =\left\{
\begin{array}{ll}
\eta  & if\;\; p\in \Sigma_{i}(R_{0})\\
\beta_{i;2}(s_{i})\eta(s_{i},t_{i}) & if\;\; p\in \Sigma_{i}(r+1)\setminus \Sigma_{i}(R_{0}) \\
0 & if\;\; p\in\Sigma_{i}\setminus \Sigma_{i}(r+1)
\end{array}
\right..
\end{equation}
Then $\eta_{i}$  can be considered as
a section over $\Sigma_i$, i.e., $\eta_{i}\in C^{\infty}(\Sigma_{i};u_{i}^{\ast}TM\otimes \wedge_{j_{oi}}^{0,1}T\Sigma_{i}).$ If no danger of confusion  we will denotes \eqref{eqn_s.n.} by $\eta_{i}=\beta_{i;2}\eta.$
 Define
\begin{equation}\|\eta\|_{k-1,2,\alpha,r}=\|\eta_{1} \|_{\Sigma_1,k-1,2,\alpha} +
\|\eta_{2} \|_{\Sigma_2,k-1,2,\alpha}.
\end{equation}
We now define a norm $\|\cdot\|_{k,2,\alpha,r}$ on
$C^{\infty}(\Sigma_{(r)};u_{(r)}^{\ast}TM).$ For any section
$h\in C^{\infty}(\Sigma_{(r)};u_{(r)}^{\ast}TM)$ denote
\begin{equation}\label{norm h}
  h_0=\int_{ {S}^1}h(r,t)dt,\end{equation}
\begin{equation}\label{norm h-1}
h_1(s_1,t_1) = (h-\hat h_{0})(s_1,t_1)\beta_{1;2}(s_1),\;\;\;h_2(s_2,t_2)= (h-\hat h_{0})(s_2,t_2) \beta_{2;2}(s_{2}).
\end{equation}
 We define
 \begin{equation}\label{norm-h-2}
 \| h\|_{k,2,\alpha,r}=\|h_1\|_{\Sigma_1,k,2,\alpha} +
\|h_2\|_{\Sigma_2,k,2,\alpha}+|h_{0}|.
\end{equation}
Denote the resulting completed spaces by $$W_{r}^{k-1,2,\alpha}(\Sigma_{(r)};u_{(r)}^{\ast}TM\otimes \wedge_{j_{o}}^{0,1}T\Sigma_{(r)})\;\; \mbox{ and }\;\; W_{r}^{k,2,\alpha}(\Sigma_{(r)};u_{(r)}^{\ast}TM) $$  respectively. To simplify notations we will  denote
$$
 W^{k,2,\alpha}_{r,u_{(r)}} =W^{k,2,\alpha}(\Sigma_{(r)},u_{(r)}^*TM), \;\;\;\;L^{k-1,2,\alpha}_{r,u_{(r)}}=W_r^{k-1,2,\alpha}(\Sigma_{(r)}, u_{(r)}^{*}TM\otimes \wedge^{0,1}_{j_{o}}T^{*}\Sigma_{(r)}).
$$
 Set $\mathbb{D}=\{z=e^{-2r-\sqrt{-1}2\pi\tau} |R_0< r\leq \infty,\;0\leq \tau
\leq 1\}$ and for $(r)\in \mathbb{D}$ denote
$$\mathcal{B}_r =\left\{ v_{r}:\Sigma_{(r)}\to M\;|\;v_{r}=\exp_{u_{(r)}}h_{r},\;h_{r}\in W^{k,2,\alpha}_{r,u_{(r)}} \right\}.$$
For any $R>R_0,$ $\rho>0$ denote
$$
O_{b_{o}}(\mathrm{R},\rho ):= \bigcup_{r\geq R,\tau\in S^1} \left\{\left(e^{-2r-2\pi \sqrt{-1}\tau},v_{r}\right)|\;v_{r}=\exp_{u_{(r)}}h_{r}\in \mathcal B_{r}, \| h_{r}\|_{k,2,\alpha,r}<\rho   \right\}.
$$

\v
\subsection{\bf Local regularization}\label{s_intro_3}
We want to use the implicit function theorem to get $(j,J)$-holomorphic maps from $\Sigma_{(r)}$ to $M$.
When the transversality fails we need to take the "regularization". We explain this now. Fix $a_o=(a_{o1},a_{o2})$, where $a_{oi}=(j_{oi}, {\bf y}_{oi})$.

\v
Let $\bar{\mathcal E}$ be the infinite dimensional Bananch bundle over
$(\mathcal{B}_{1}\times_{q}\mathcal B_{2})\mid_{O_{b_o}(\rho)}$ whose fiber at $b\in O_{b_o}(\rho)$ is
$$\bar{\mathcal E}_{b}:= \left\{\beta(R_{0}-s_{i})\eta(s_{i},t_{i})|\eta \in \mathcal E_1\times \mathcal E_2\right\}.$$
$\bar{\mathcal E}$ can be viewed as a infinite dimensional Bananch bundle over $\mathcal{B}_r\mid_{O_{b_{o}}(\mathrm{R},\rho )}$ for $r>\mathrm{R}>R_0$. Denote by $\inj_{M}$ the injective radius of $(M,G_{J})$. Given $\xi\in W^{k,2,\alpha}_{r,u_{(r)}}$ with $\|\xi\|_{L^{\infty}}<\inj_M$, let
\begin{equation}\label{eqn_Phi}
\Phi_{u_{(r)}}(\xi):u_{(r)}^*TM\rightarrow (\exp_{u_{(r)}}\xi)^*TM \end{equation}
denote the complex bundle isomorphism, given by parallel transport along the
geodesics $s\rightarrow \exp_{u_{(r)}}(s\xi)$ with  respect to the connection $\tilde{\nabla}$.
There is a neighborhood $O_{b_o}(\mathrm{R},\rho)$, over which
$\bar{\E}$ is trivialized. Since $u_{(r)}|_{\Sigma(R_{0})}=u |_{\Sigma(R_{0})},$   there is a isomorphism $ \bar{\Phi}_{b_{o}, b}: \bar{\E}_{b_{o}}\rightarrow  \bar{\E}_{b}$ for any $b\in O_{b_o}(\mathrm{R},\rho)$, where  $\bar{\Phi}_{b_{o}, b}$ is induced by ${\Phi} $. We can choose a finite dimensional subspace $K_{b_{o}}=(K_{b_{o1}}, K_{b_{o2}}) \subset \E|_{b_{o}}=(\E_{b_{o1}},\E_{b_{o2}})$ such that
$$K_{b_{o1}} + \im D_{u_{1}} = \E|_{b_{o1}},\;\;\;K_{b_{o2}} + \im D_{u_{2}} = \E|_{b_{o2}}.$$
By a small perturbation we may assume that
every member of $K_{b_{oi}}$ is $C^{\infty}$ and supports in the compact subset $\Sigma_i(R_0)$ for some large number $R_0$,
Then $K_{b_{o}}$ can be considered a subspace of $L^{k-1,2,\alpha}_{r,u_{(r)}}$ in a natural way.
We define a thickned Fredholm system
$(K_{b_o}\times O_{b_{o}}(R,\rho ), K_{b_o}\times \E|_{O_{b_{o}}(R,\rho )}, \mathcal{S})$ with
\begin{equation}\label{local regu}
\mathcal{S}(\kappa,b) = \bar{\partial}_{j_o,J}v + \mathfrak{i}(\kappa,b), \;\;\;\;\;\;\;\;\kappa\in K_{b_o},
\end{equation}
where $ \mathfrak{i}(\kappa,b) =\bar{\Phi}_{b_o,b}\kappa$ and $b=(a_{o},v)$.
Denote by $D{\mathcal S}_{(\kappa,b)}$ the linearized operator of $\mathcal{S}$ at $(\kappa,b)$. Then
$$D{\mathcal S}_{(0,b_o)}|_{K_{b_o}\times  {W}^{k,2,\alpha}}:K_{b_o}\times
W^{k,2,\alpha}_{u}\to L_{u}^{k-1,2,\alpha}$$ is surjective.
Let $Q_{(0,b_o)}$ be a right inverse of $D{\mathcal S}_{(0,b_o )}.$
We call $(\kappa, b)$ a perturbed $(j_o,J)$-holomorphic map, if $(\kappa, b)$ satisfies $\mathcal{S}(\kappa,b)=0$. If no confusion, we denote $K_{b_{o}}$ by $K.$ Let $(\kappa_o,b_o)$ be a perturbed $(j_o,J)$-holomorphic map. Denote by $D{\mathcal S}_{(\kappa_o,b_o)}$ the linearized operator of $\mathcal{S}$ at $(\kappa_o,b_o)$, by $Q_{(\kappa_{o},b_o)}$ a right inverse of $D{\mathcal S}_{(\kappa_{o},b_o )}.$ Denote by $D{\mathcal S}_{(\kappa_o,b_{(r)})}$ the linearized operator of $\mathcal{S}$ at $(\kappa_o,b_{(r)})$.

\subsection{Some operators}
\v
Given
$\eta\in L^{k-1,2,\alpha}_{r,u_{(r)}}$ denote
\begin{equation}\label{def. of Q'-1}
\left(\eta_1(s_1,t_1),\eta_2(s_2,t_2)\right) = \left(\beta_{1;2}(s_1)\eta(s_1,t_1), \beta_{2;2}(s_2)\eta(s_2,t_2)\right),\end{equation}
\begin{equation}\label{def. of Q'-2}
Q_{(\kappa_{o},b_o)}(\eta_1,\eta_2)=(\kappa,h)=(\kappa,(h_1,h_2)),\;\;h_{i}\in W^{k,2,\alpha}(\Sigma_{i};u_{i}^{\ast}TM ),\end{equation}
where $\eta(s_i,t_i)$ denote the expression of $\eta$ in terms of the coordinates $(s_i,t_i)$.
We define  $h_{(r)}\in W^{k,2,\alpha}_{r,u_{(r)}}$ by
\begin{equation}\label{def_h}
h_{(r)}=\beta_{1;r}(s_1)h_{1}(s_1,t_1)+\beta_{2;r}(s_{1}-2r)h_{2}(s_{1}-2r,t_1-\tau).
\end{equation}
Note that, in the part $\{|s_i|\geq \frac{r}{2}\}$, $h_1$ and $h_2$ are identified as vectors in $T_{u(q)}M$, so the meaning of definition \eqref{def_h} is clear. Since $\mathfrak{i}(\kappa, b)$ supports in $\Sigma(R_0)$ for all $\kappa\in K$ and  $$u_{(r)}|_{\{ s_1 \leq \frac{r}{2}\}}=u_1|_{\{ s_1 \leq \frac{r}{2}\}},\;\;\;\;\;u_{(r)}|_{\{ |s_2|\leq \frac{r}{2}\}}=u_2|_{\{|s_2|\leq \frac{r}{2 }\}},$$
we have $ \mathfrak{i}(\kappa,b_{(r)}) = \mathfrak{i}(\kappa ,b )$ along $u_{(r)}$. Then we define an approximate right inverse
\begin{equation}
\label{def_approximate_right_inverse}
Q_{(\kappa_{o},b_{(r)})}^{\prime}\eta =(\kappa ,h_{(r)}).\end{equation}
It is easy to show that
$D{\mathcal S}_{(\kappa_{o}, b_{(r)})}\circ  Q'_{(\kappa_{o},b_{(r)})}$ is invertible when $r$ big enough ( cf. the proof of Lemma \ref{aright_inverse_after_gluing} ). Then a right inverse $ Q_{(\kappa_{o},b_{(r)})}$  of $D{\mathcal S}_{(\kappa_{o}, b_{(r)})}$ is given by
 \begin{equation}
 \label{express_right_inverse}
 Q_{(\kappa_{o},b_{(r)})}= Q'_{(\kappa_{o},b_{(r)})}( D{\mathcal S}_{(\kappa_{o}, b_{(r)})}\circ  Q'_{(\kappa_{o},b_{(r)})})\inv.
 \end{equation}
 For a fixed gluing parameter $(r)=(r,\tau)$
we define a map
$$I_r: \ker D \mathcal{S}_{( \kappa_{o}, b_{o})}\longrightarrow \ker D \mathcal{S}_{( \kappa_{o},b_{(r)})}$$ as follows. For any $(\kappa,h+\hat h_{0})\in \ker D \mathcal {S}_{(\kappa_o,b_o)},$
 where $h=(h_1 ,h_2  )\in W^{k,2,\alpha}_u,$ we set
\begin{equation}
\label{definition_h_ker}
 h_{(r)}=\beta_{1;r}(s_1)h_{1}(s_1,t_1)+\beta_{2;r}(s_{1}-2r)h_{2}(s_{1}-2r,t_{1}-\tau)+\hat h_{0},
\end{equation}
and define
\begin{align}\label{definition_I}
&I_{r}(\kappa,h+\hat h_{0})=(\kappa,h_{(r)})- Q_{(\kappa_{o},b_{(r)})}\circ D\mathcal{S}_{(\kappa_o,b_{(r)})}(\kappa,h_{(r)}).
\end{align}
It is easy to see that $I_{r}(\kappa,h+\hat h_{0})\in \ker D \mathcal{S}_{(\kappa_o,b_{(r)})}$. We can prove that when $r$ large enough  $I_r$ is an isomorphism ( cf \S\ref{isomorphism-1} ).
 \v
It is important to estimate the derivative of the gluing map with respect to $r$. To this end we need to take the derivative $\frac{\p}{\p r}$ for $ Q_{(\kappa_{o},b_{(r)})}$ and other operators. Note that both $Q_{(\kappa_{o},b_{(r)})}$ and $f_{(r)}$ are global operators, so we need a global estimate. On the other hand,
since the domain $\Sigma_{(r)}$ depends on $r$, in order to make the meaning of the derivative $\frac{\p}{\p r}$ for these operators clear we need transfer all operators defined over $\Sigma_{(r)}$ into a family of operators  defined over $\stackrel{\circ}{\Sigma}_1\cup \stackrel{\circ}{\Sigma}_2$, depending on $(r)$.
We first define three maps
$$H_{r}:L^{k-1,2,\alpha}_{r,u_{(r)}}\to L^{k-1,2,\alpha}_u,\;\;\;\;P_{r}:L^{k-1,2,\alpha}_u\to L^{k-1,2,\alpha}_{r,u_{(r)}},\;\;\;\phi_{r}:\mathcal W^{k,2,\alpha}_u\to W^{k,2,\alpha}_{r,u_{(r)}}$$
as following. Given $\eta\in L^{k-1,2,\alpha}_{r,u_{(r)}}$ define
$$H_{r}\eta=(\beta_{1;2}(s_1)\eta(s_1,t_1),\;\beta_{2;2}(s_2)\eta(s_2,t_2)),$$
where $\eta(s_{i},t_i)$ is the expression of $\eta$ in terms the coordinates $(s_{i},t_{i}).$
Given
$(\eta_1,\eta_2)\in L^{k-1,2,\alpha}_u$ define
\begin{equation} \label{eqn_s.n.1}
P_{r}(\eta_{1},\eta_{2})=\left\{
\begin{array}{ll}
\eta_{i} & if\;\; p\in  \Sigma(r/2) \\
\beta_{1;2}(s_1)\eta_{1}(s_1,t_1)+\beta_{2;2}(s_{1}-2r) \eta_{2}(s_{1}-2r,t_1-\tau) & if\;\; p\in  \Sigma_{(r)}\setminus \Sigma(r/2)
\end{array}
\right..
\end{equation}
  If no danger of confusion  we will denotes \eqref{eqn_s.n.1} by $P_{r}(\eta_{1},\eta_{2})=\sum\beta_{i;2}\eta_{i}.$
\v
Given
$(h_1+\hat h_0,h_2+\hat h_0)\in \mathcal  W^{k,2,\alpha}_u$ with $\supp\; h_{i}\subset \Sigma(3r/2)$, define
\begin{align*}\left.\phi_{r}\left(h_1+\hat h_0,h_{2}+\hat h_0\right)\right|_{\Sigma_i(r/2)}&=\left.\left(h_{i}+\hat h_0\right)(s_i,t_i)\right|_{\Sigma_i(r/2)},\\
\left.\phi_r\left(h_1+\hat h_0,h_2+\hat h_0\right)\right|_{ \frac{r}{2} \leq s_{1} \leq \frac{3r}{2}}&=\left.\left(h_1(s_1,t_1)+ h_2(s_1- 2r,t_1-\tau)+\hat h_0\right)\right|_{ \frac{r}{2} \leq s_{1} \leq \frac{3r}{2}}.
\end{align*}
By \eqref{beta_rel.} one can check that
\begin{equation}\label{equ.PH}
P_{r}  H_{r} =Id,\;\;\;\;\;\; H_{r}  P_{r} (\eta_{1},\eta_{2})=(\tilde \xi_{1},\tilde \xi_{2}).
\end{equation}
where
$$\tilde \xi_{1}=\beta_{1;2}(\beta_{1;2}\eta_1(s_1,t_1)+\beta_{2;2}\eta_{2}(s_1-2r, t_1-\tau)),\;\;\;\tilde \xi_{2}=\beta_{2;2}(\beta_{1;2}\eta_1(s_2+2r,t_2+\tau)+\beta_{2;2}\eta_{2}(s_2, t_2)).
$$
In particular, $H_{r}$ is injective and $P_{r}$ is surjective.
\v
Next we introduce the following three operators
$$(Q^{\prime})^{*}_{(\kappa_{o},b_{(r)})}:L^{k-1,2,\alpha}_{r,u_{(r)}}\to W^{k,2,\alpha}_u,\;\;\;\;Q^{*}_{(\kappa_{o},b_{(r)})}:L^{k-1,2,\alpha}_{r,u_{(r)}}\to W^{k,2,\alpha}_u,\;\;\;I^{*}_{r}:\ker D \mathcal {S}_{(\kappa_o,b_o)}\to  K_{b_{o}}\times \mathcal W^{k,2,\alpha}_u .$$
Given
$ \eta \in \lkar$, denote
\begin{equation} (\kappa ,(h_1,h_2))=Q_{( \kappa_{o}, b_{o})}H_r\eta
.\end{equation}
Set
\begin{equation}
h_{r}^{*}=(\beta_{1;r}(s_1)h_{1}(s_1,t_1),\;\beta_{2;r}(s_2)h_{2}(s_2,t_2))\in \wka.
\end{equation}
Define
\begin{align}\label{eqn_def_Q*}
&(Q^{\prime})^{*}_{(\kappa_{o},b_{(r)})}\eta =  (\kappa ,
h_r^*),\;\;\;\; Q^*_{(\kappa_{o},b_{(r)})}=(Q^{\prime})^{*}_{(\kappa_{o},b_{(r)})}  (D\mathcal S_{(\kappa_o,b_{(r)})}  Q^{\prime}_{(\kappa_{o},b_{(r)})})^{-1}.
\end{align}
Then we have maps
$$(Q^{\prime})^{*}_{(\kappa_{o},b_{(r)})}  P_r:L^{k-1,2,\alpha}_u\to W^{k,2,\alpha}_u,\;\;\;\;\;
Q^{*}_{(\kappa_{o},b_{(r)})}  P_r: L^{k-1,2,\alpha}_u\to W^{k,2,\alpha}_u.$$

\v\n
For any $(\kappa,\zeta+\hat \zeta_{0})\in \ker D \mathcal {S}_{(\kappa_o,b_o)},$
 where $\zeta=(\zeta_1 ,\zeta_2  )\in W^{k,2,\alpha}_u,$   we set
\begin{equation}
\label{definition_h_ker}
\zeta_{r}^{*}= (\zeta_{1}\beta_{1;r}+\hat{\zeta}_{0},\; \zeta_{2}\beta_{2;r}+\hat{\zeta}_{0} ).
\end{equation} Define
\begin{align}\label{def_I*}
&I^{*}_{r}(\kappa,\zeta+\zeta_{0})=(\kappa,\zeta_{r}^*)-Q^{*}_{(\kappa_{o},b_{(r)})}\circ D\mathcal{S}_{(\kappa_o,b_{(r)})}(Id,\phi_{r})(\kappa,\zeta^*_{r}).
\end{align}
By the definition we have
$$
I_{r}=(Id,\phi_{r})\circ I_{r}^{*},\;\;\;\;\;\;Q_{( \kappa_{o},b_{(r)})}=(Id,\phi_{r})\circ Q_{( \kappa_{o},b_{(r)})}^{*}.
$$
where  $Id$ identifies $\bar{\mathcal E}_{b_{o}}$ with $\bar{\mathcal E}_{b_{(r)}}.$
 \v For fixed $(r)$ we consider the family of maps:
\begin{align*}& \mathcal{F}_{(r)}: K  \times W^{k,2,\alpha}_{r,u_{(r)}}
\rightarrow L^{k-1,2,\alpha}_{r,u_{(r)}},\;\;\;\mathcal{F}_{(r)}(\kappa,h)=P_{b, b_{(r)}}\left(\bar{\partial}_{j_{o},J}v
+ \mathfrak{i}(\kappa,b)\right),
\end{align*}
where $b=(r,\tau,a_{o}, v),\;\;v=\exp_{u_{(r)}}h$  and
\begin{equation*}
P_{b, b_{(r)}}=\Phi_{u_{(r)}}(h)^{-1}:W_{r}^{k-1,2,\alpha}(\Sigma_{(r)}, v^{*}TM\otimes \wedge^{0,1}_{j_{o}}T^{*}\Sigma_{(r)})\to \lkar.
\end{equation*}
By implicit function theorem, there exists a small neighborhood $O_{r}$ of $0 \in \ker\;D{\mathcal S}_{(\kappa_{o},b_{(r)})}$ and a unique smooth map
$$f_{(r)}: O_{r}\rightarrow L^{k-1,2,\alpha}_{r,u_{(r)}}$$ such that for any $(\kappa,\zeta)\in O_{r},$
$$ dv+J\cdot dv\cdot j_{a_{o}}+ \mathfrak{i}\left(\kappa_{o}+\kappa_{r},b\right)=0,$$
where $b=(r,\tau,a_{o},v),$ $v=\exp_{u_{(r)}}(\xi_{(r)})$ and
$(\kappa_{r},\xi_{(r)})=(\kappa,\zeta) + Q_{( \kappa_{o},b_{(r)})}\circ f_{(r)}(\kappa,\zeta).$
Then combining with the isomorphism $I_r: \ker D \mathcal{S}_{( \kappa_{o}, b_{o})}\longrightarrow \ker D \mathcal{S}_{( \kappa_{o},b_{(r)})}$ we
get a gluing map
$$I_{r}+Q_{( \kappa_{o},b_{(r)})}\circ f_{(r)} \circ I_{r}$$
from $O$ into the  space of  perturbed $(j_{a_o},J)$-holomorphic maps, where $O$ is a neighborhood of $0$ in $\ker D \mathcal{S}_{(\kappa_o,b_o)}$.
We consider the operator
$$I^*_r +Q^*_{( \kappa_{o},b_{(r)})}f_{(r)}I_r: \ker D \mathcal{S}_{( \kappa_{o}, b_{o})}\to
K\times  {\mathcal{W}}^{k,2,\alpha}_u.$$
It is easy to see that, restricting to $\Sigma(R_0)$, we have $$I^*_r(\kappa,\zeta)+Q^*_{( \kappa_{o},b_{(r)})}f_{(r)}(I_r(\kappa,\zeta))=I_r(\kappa,\zeta)+Q_{( \kappa_{o},b_{(r)})}f_{(r)}(I_r(\kappa,\zeta)).$$

\v
\subsection{\bf Main result}\label{s_main_thm}

 The main result  of this paper is the following estimate.
\begin{theorem}\label{lem_est_rI}
Let $\Sigma$ be a marked nodal Riemann surface with one nodal point $q$. Let $(\kappa_o, b_o)=(\kappa_o, a_o, u)\in K\times O_{b_o}(\rho)$ be a perturbed $(j_o,J)$-holomorphic map from $\Sigma$ to $M$, where $u=(u_{1},u_{2}):\Sigma_1\cup \Sigma_2\to M$ with $u_{1}(q)=u_{2}(q)$ and
$$D{\mathcal S}_{(\kappa_o,b_o)}|_{K\times  \wka} :K\times  \wka \rightarrow \lka $$ is surjective.
Denote by $Q_{(\kappa_{o},b_o)}:\lka \rightarrow K \times \wka$ a right inverse of $D{\mathcal S}_{(\kappa_o,b_o)} .$ Let $ Q_{(\kappa_{o},b_{(r)})}$ be the right inverse of D${\mathcal S}_{(\kappa_o,b_{(r)})}$ defined in \eqref{express_right_inverse}. Then the following hold.
\v
Let $\fc\in (0,1)$ be a fixed constant. For any $0<\alpha<\frac{1}{100\fc}$,  there exists two positive  constants  $\mathsf C_{1}, \mathsf{d}$ such that for any $(\kappa,\zeta)\in \ker D \mathcal{S}_{(\kappa_{o},b_{o})}$ satisfying $\|(\kappa,\zeta)\|_{\mathcal W,k,2,\alpha}\leq \mathsf{d}$, we have the following estimate
 $$
\left \|\tfrac{\partial }{\partial r}\left(I^{*}_{r}(\kappa,\zeta)+Q^{*}_{( \kappa_{o},b_{(r)})}\circ f_{(r)} \circ I_{r}(\kappa,\zeta) \right) \right \|_{k-1,2,\alpha} \leq  \mathsf C_{1}e^{-(\fc-5\alpha)\tfrac{r}{4} }(\mathsf{d}+1),
$$
when $r$ large enough, where $\|\cdot\|_{ k,2,\alpha}$ is defined in \eqref{norm}.
\end{theorem}
\vskip
0.1in \noindent
As a consequence we have
\begin{corollary}\label{coordinate_decay}
 Let $l\in \mathbb Z^+$ be a fixed integer. There exists positive  constants  $\mathsf C_{o,l}, \mathsf{d}, R_{0}$ such that for any $(\kappa,\zeta)\in \ker D \mathcal{S}_{(\kappa_{o},b_{o})}$ with $\|(\kappa,\zeta)\|_{\mathcal W,k,2,\alpha}\leq \mathsf{d}$,
restricting to the compact set $\Sigma(R_0)$, the following estimate hold
 $$
\left \|\tfrac{\partial }{\partial r}\left(I_{r}(\kappa,\zeta)+Q_{( \kappa_{o},b_{(r)})}\circ f_{(r)} \circ I_{r}(\kappa,\zeta) \right) \right \|_{C^{l}(\Sigma(R_0))} \leq  \mathsf C_{o,l}e^{-(\fc-5\alpha)\tfrac{r}{4} }(\mathsf{d}+1).
$$
\end{corollary}
\vskip
0.1in \noindent

\v
In order to apply our result to the study of $J$-holomorphic map moduli spaces we should let $a=(a_1,a_2)$ vary in $\mathbf{A}_1\times \mathbf{A}_2$ and need to take a sum of several $K_{b_i}$. In the section \S\ref{Extension} we extend the Theorem \ref{lem_est_rI} to more general setting and to a neighborhood of $a_o$.
In our next paper \cite{LS-1} we use the Theorem to show that the Gromov-Witten invariants can be defined as an integral over top strata of virtual neighborhood. Furthermore, we prove that such invariants
satisfy all the Gromov-Witten axioms of Kontsevich and Manin.
\v

\v

\section{\bf Some important estimates}\label{s_est}

In this section we give some important estimates which will be used in this paper.
\subsection{\bf Exponential decay theorem for $J$-holomorphic maps}\label{s_est_1}

Denote $$B_{r}(0)=\{z\in \mathbb C|\; |z|\leq r\},\;\;\;\;\;\;\;\;A(r,R):=\{z\in \mathbb C|\; r\leq |z|\leq R\}.$$
Take the standard Euclidean metric on $B_{r}(0)$ and $A(r,R).$
\v
The following two theorems in \cite{MS} (see Lemma 4.3.1 and Lemma 4.7.3) are  fundamental results:
\begin{theorem}\label{theorem_2.1} Let $(M,J)$ be a compact almost complex manifold. Suppose that $M$ is equipped with any
Riemannian metric. Then there exists a constant $\hbar> 0$ such that the following
holds.
If $r > 0$ and $u : B_{r}(0) \to M$ is a $J$-holomorphic curve then
$$\int_{B_{r}(0)} |du|^2<\hbar\;\;\;\Rightarrow \;\;\;\;|du(0)|^2 < \frac{8}{\pi r^2}\int_{B_{r}(0)} |du|^2.$$
\end{theorem}
\v

\begin{theorem}\label{theorem_2.2}
Let $(M,\omega)$ be a compact symplectic manifold and $J$ be an  $\omega$-tame almost complex structure.
Then, for every $\fc<1$, there exist positive constants $\hbar = \hbar(M, \omega, J, \nu)$ and $\mathsf C= \mathsf C(\fc)$
such that every $J$-holomorphic curve $u : A(r, R) \to M$ with
$$
 E(u,A(r,R))<\hbar,
$$
satisfies for any $\log 2\leq T\leq \log \sqrt{R/r}$,
$$
E(u,A(e^{T}r,e^{-T}R))\leq \mathsf C e^{-2\fc T} E(u,A(r,R)).
$$
\end{theorem}
\v

\v
Now we use the cylinder coordinates $(s,t)$. Fix a constant $R_{0}\in (0,r/4)$. Take the standard complex structure $j$ and a smooth metric $\mathbf g$ on $(R_{0},2r-R_{0})\times S^1$ such that
 $$
 \mathbf g= ds^2+dt^2,\;\;\;\;\;\mbox{ in }\;R_{0}\leq s\leq 2r-R_{0}.
 $$
We can restate Theorems \ref{theorem_2.1} and \ref{theorem_2.2} as following.
\v

\begin{theorem}\label{tube_exponential_decay}
Fix a constant  $\fc\in (0,1)$. Let $(M,\omega)$ be a compact symplectic manifold and $J$ be an  $\omega$-tame almost complex structure.
Then  there exist positive constants $\hbar = \hbar(M, \omega, J, \nu)$ and $\mathsf C_1= \mathsf C_1(\fc)$
such that every    $(j,J)$-holomorphic map  $u:[R_{0},2r-R_{0}]\times S^1\rightarrow  M$ with
\begin{equation}
 E (u,R_{0}\leq s\leq 2r-R_{0}):=\int_{[R_{0},2r-R_{0}]\times S^1} |du|^2 <\hbar,
\end{equation}
 and any $   R_{0}+\log 2 \leq R\leq r,$ we have
 \begin{align*}
&E(u,R\leq s\leq 2r-R)\leq \mathsf{C}_{1}E(u,R_{0}\leq s\leq 2r-R_{0})e^{-2\fc(R-R_{0})},\;\; \\
&\left|\frac{\p u}{\p s} (s,t)\right|+\left|\frac{\p u}{\p t} (s,t)\right|\leq \mathsf{C}_{1}\sqrt{E(u,R_{0}\leq s\leq 2r-R_{0}})e^{-\fc(R-R_{0})},\;\;  \forall\; R+\frac{1}{2}\leq s\leq 2r-R-\frac{1}{2}.
\end{align*}
 \end{theorem}
\v
If we take $\mathbb R^{2m}$ instead of $M$, a similar result also hold. Let $\omega_{o}$ be the standard symplectic form in $\mathbb R^{2m}$.
Let  $u:[R_{0},2r-R_{0}]\times S^1\to \mathbb R^{2m}$ be a map  satisfying
$du+J_{0}\cdot du\cdot j= 0$. Denote $$e(R_{0}):=E\left(u,R_{0}\leq s\leq 2r-R_{0}\right):=\int_{[R_{0},2r-R_{0}]\times S^1} u^{*}\omega_{o}=\int_{[R_{0},2r-R_{0}]\times S^1} |\nabla u|^2ds dt$$ where  $|\nabla u|^2=\sum_{i=1}^{2m} \left(\frac{\p u^{i}}{\p s}\right)^2+\sum_{i=1}^{2m} \left(\frac{\p u^{i}}{\p t}\right)^2.$ We have
\begin{lemma}\label{decay_kernel_tube}
There is a constant
$\mathsf C$ depending only on    $\fc$ such that
for all  solution $u:[R_{0},2r-R_{0}]\times S^1\rightarrow \mathbb R^{2m}$ of the equations $du+J_{0}\cdot du\cdot j= 0$, and any $R_{0}+\log 2 \leq R\leq r,$ we have,
\begin{align}\label{Ee_decay}
 e(R)&\leq e^{-2\fc(R-R_{0})} e(R_{0}),\;\;\;\\ \label{Eu_decay}
\left|\frac{\p u}{\p s} (s ,t )\right|+\left|\frac{\p u}{\p t} (s ,t )\right|&\leq \mathsf{C}\sqrt{e(R_{0})}e^{-\fc(R-R_{0})},\;\;  \forall\; R+\frac{1}{2}\leq s \leq 2r-R-\frac{1}{2}.
\end{align}
\end{lemma}
\begin{proof} We give a sketch of the proof. For any loop
$\gamma: S^1\rightarrow \mathbb R^{2m}$ and any smooth map  $W: D_{1}(0)\rightarrow \mathbb R^{2m}$
satisfying $W(\p D_{1}(0) )=\gamma$,
we define an action functional by
\begin{equation*}
{\mathcal A}( \gamma)=-\int_{D_{1}(0)}W^* \omega_{o}.
\end{equation*}
Since $\omega_{o}$ is exact in $\mathbb R^{2m},$ the action functional  ${\mathcal A}( \gamma)$ is well defined. Denote by $L(\gamma)$ the length of $\gamma.$
Then isoperimetric inequality can be written as (see Page 85 of \cite{MS})
\begin{equation}\label{eqn_isop_ineq}
|\mathcal A(\gamma)|\leq \frac{1}{4\pi } L(\gamma)^2.
\end{equation}
Denote $u_{s}:S^{1}\to \mathbb R^{2m}$ given by $u_{s}(t)=u(s,t).$
 Then
 $$e(R)=|\mathcal{A}(u_{R})-\mathcal{A}(u_{2r-R})|.$$
By the same argument of \cite{MS} (see Page 105 of \cite{MS}), using inequalities \eqref{eqn_isop_ineq}    one can prove \eqref{Ee_decay}.
It is well known  that $|\nabla u|^2$ is subharmonic function. Then
\begin{equation}\label{eqn_subh}
|\nabla u|^2(p)\leq \frac{4}{\pi}\int_{D_{\frac{1}{2}}(p)} |\nabla u|^2,\;\;\;\; \forall \; p \in [R_{0}+1,2r-R_{0}-1]\times S^1 .
\end{equation}
Then \eqref{Eu_decay} follows from  \eqref{eqn_subh}.
\end{proof}

\begin{remark}
Theorem \ref{tube_exponential_decay} and Lemma \ref{decay_kernel_tube} also hold for $r=\infty.$
\end{remark}

\v

\subsection{Estimates for the equation $\bar{\p}_{j,J_{0}}\zeta=\chi$}\label{s_est_2}

Fix $\alpha \in (0,\frac{\fc}{100}).$   The multiplication by $e^{\alpha s}$ gives an isomorphism from $ W^{k,2,\alpha} (\mathbb R\times S^1;\mathbb R^{2m})$ to $W^{k,2}(\mathbb R\times S^1;\mathbb R^{2m} ) $
and
\begin{equation}
\label{eqn_norm_rel}
 \mathsf C\inv\|e^{\alpha s}f\|_{W^{k,2} (\mathbb R\times S^1;\mathbb R^{2m})}\leq \|f\|_{W^{k,2,\alpha} (\mathbb R\times S^1;\mathbb R^{2m})}\leq \mathsf C\|e^{\alpha s}f\|_{W^{k,2} (\mathbb R\times S^1;\mathbb R^{2m})},
\end{equation}
for some constant $ \mathsf C>0$ depending only on $k$ and $ \alpha .$
It is easy to check that
\begin{equation}\label{eqn_sol_rel}
 \bar{\p }_{j,J_{0}} h=\eta, \mbox{ if and only if }(\bar{\p }_{j,J_{0}}-\alpha) (e^{\alpha s} h)=e^{\alpha s}\eta.
\end{equation}
Obviously, $L:=J_{0}\frac{\partial }{\partial t} -\alpha$ is an invertible elliptic operator on $H^{1}(S^1)$. It is well known that  the operator
  $\bar{\p}_{j,J_{0}}-\alpha:W^{k,2 }(\mathbb R\times S^1,\mathbb R^{2m})\rightarrow W^{k-1,2 }(\mathbb R\times S^1,\mathbb R^{2m})$  has a   right inverse $$Q_{\alpha}:W^{k-1,2 }(\mathbb R\times S^1,\mathbb R^{2m})\to W^{k,2 }(\mathbb R\times S^1,\mathbb R^{2m})$$ with
 $$\|Q_{\alpha}\rho\|_{k,2}\leq \mathsf C(k,\alpha)\|\rho \|_{k-1,2},\;\;\;\forall \rho\in W^{k-1,2 }(\mathbb R \times S^1,\mathbb R^{2m})$$
 (see Proposition 3.4 in \cite{D}).
  Denote $$\mathbb R^{+}=\{ x\in \mathbb R| x\geq 0\},\;\;\;\mathbb R^{-}=\{ x\in \mathbb R| x\leq 0\}.$$
Denote the completed spaces of $C^{k-1,2}_{c}(\mathbb R^+ \times S^1,\mathbb R^{2m})$ with respect to the norm $\|\cdot\|_{k-1,2,\alpha}$ by $W^{k-1,2,\alpha}_{c}(\mathbb R^+ \times S^1,\mathbb R^{2m}).$
For any $\eta\in W^{k-1,2,\alpha}_{c}(\mathbb R^{+} \times S^1,\mathbb R^{2m})$, $e^{\alpha s} \eta$ can been seen as in $W^{k-1,2 }(\mathbb R \times S^1,\mathbb R^{2m})$ by extending  $e^{\alpha s} \eta|_{\mathbb R^{-} \times S^1}=0.$ Set $$Q_{J_{0},\alpha}\eta=\left.e^{-\alpha s}Q_{\alpha} \left(e^{\alpha s}\eta\right)\right|_{\mathbb R^{+} \times S^1}.$$
Then by \eqref{eqn_norm_rel} and  \eqref{eqn_sol_rel} we conclude that     $$\bar{\p}_{j,J_{0}} \left(Q_{J_{0},\alpha}\eta\right)=\eta,$$ and $Q_{J_{0},\alpha}\eta\in W^{k,2,\alpha}(\mathbb R^+ \times S^1,\mathbb R^{2m})$ with
\begin{equation}\label{eqn_2.9}\|Q_{J_{0},\alpha}\eta\|_{k,2,\alpha}\leq \mathsf C(k, \alpha )\|\eta \|_{k-1,2,\alpha},\;\;\;\forall \eta \in W_{c}^{k-1,2,\alpha}(\mathbb R^{+}  \times S^1,\mathbb R^{2m}).
\end{equation}
Then we have the following lemma.
\begin{lemma}\label{tube_ker_c-1}
Let $\eta\in \lka$ and $h +\hat h_{0}\in \cwk$ be a solution of $D_{u}(h+\hat h_{0})=\eta$ over $\Sigma\setminus \Sigma(R_{0})$.
Suppose that, for any $p,q\geq 0,$
\begin{equation}\label{exp_decay_Ss}
 \left|\frac{\p^{p+q} F^{l}_{u} }{\partial s_{1}^ p \partial t_{1} ^q}\right|\leq
C_{p,q}e^{-\fc |s_i|} ,\;\;\;\;\;\forall \; |s_i|\geq R_0,\;\; l=1,2
\end{equation} for some constant $C_{p,q}>0.$
Then  for any $0<\alpha< \frac{\fc}{2} $, there exists a constant $\mathsf{C} >0$ such that for any $R>\max\{R_{0},\bar d\}$ and  $R'>2+R$
 \begin{align}\label{t_ker_1}
  \left\|h \mid_{s_{1}\geq R' }\right\|_{k,2,\alpha} \leq \mathsf{C} \left( (e^{-(\fc-\alpha)(R'-  R)}  +  e^{-( \fc -\alpha) R} ) \left\|h+\hat h_{0}  \right\|_{\mathcal W,k,2,\alpha } + \left\|\eta\mid_{s_{1}\geq R }  \right\|_{k-1,2,\alpha }  \right)
  \end{align}
 In particular, if  $D_{u}$ has a bounded right inverse $Q_{u}:\lka\to \wka$ let $h=Q_{u}\eta$ be a solution of $D_{u}h = \eta$ over $(R_0,\infty)\times S^1$.   Then there exists a constant $\mathsf{C}' >0$ independent of $r$ such that
 	\begin{equation} \label{f_tube_estimate_curve}
	\left\|h\mid_{s_{1}\geq R'}\right\|_{k,2,\alpha} \leq \mathsf{C}' \left[\left({e^{-(\fc-\alpha)(R'-  R)}}+e^{-(\fc-\alpha) R }\right) \|\eta \|_{k-1,2,\alpha}+ \left\|\eta\mid_{  s_{1} \geq  R }  \right\|_{k-1,2,\alpha}\right].
	\end{equation}
\end{lemma}
\begin{proof} First we construct $\zeta$ such that, restricting to $  s  \geq  R+1$, \begin{equation}\label{eqn_a_zeta}
\bar{\p}_{j,J_{0}}(h -\zeta)=0.\end{equation}
   Denote  $\chi=\beta(s_{1}-R)\left(\eta-F_{u}^{1}(h+\hat h_{0})-F_{u}^{2}\frac{\p h}{\p t}\right) .$ Obviously, $\supp\;\chi \subset \{ s\geq R \} $ and $$\chi=\eta-F_{u}^{1}(h+\hat h_{0})-F_{u}^{2}\frac{\p h}{\p t},\;\;\;\;\;\mbox{ for any }   s\geq R+1.$$ Let $\zeta=  Q_{ J_{0},\alpha}(\chi).$ Then $\zeta$ satisfies \eqref{eqn_a_zeta} in $s\geq R+1.$ By \eqref{eqn_2.9} and using the the exponential decay of $F^{i}_{u},i=1,2$  we get
  \begin{equation}\label{eqn_zeta_o}
  \|\zeta  \|_{k,2,\alpha} \leq  C\|\chi\|_{k-1,2,\alpha}\leq C\left(e^{-(\fc-\alpha)R}  \left\|h +\hat h_{0} \right\|_{k,2,\alpha}+\|\eta|_{  s\geq  R}\|_{k-1,2,\alpha}\right).
  \end{equation}
     Since for any nonnegative integers $p,q$,
   \begin{equation}
   \bar{\p}_{j,J_{0}}\left(\frac{\p^{p+q}}{\p^{p} s\p^{q}t }(h -\zeta)\right)=0,\;\;\;\mbox{ in  }s\geq R+1
   \end{equation}
   using  Lemma \ref{decay_kernel_tube} with $r=\infty$   we  conclude that for any $l\leq k$
   $$
   |\p^{l}(h-\zeta)|(s,t)\leq C e^{-\fc(s-R)}  \|(h-\zeta)\|_{l,2} \leq C e^{-\fc(s-R)} \|h-\zeta\|_{k,2,\alpha},\;\;\;\;\forall s\geq R+2.
   $$
   By $\lim\limits_{s\to \infty} (h-\zeta)=0$ and  the integration  with respect to s, we have
  \begin{equation}\label{eqn_zeta_ao}
 \left\|(h-\zeta)\mid_{s\geq R'}\right\|_{k,2,\alpha}  \leq C e^{-(\fc-\alpha)(R'-R)} \|h-\zeta\|_{k,2,\alpha}.
\end{equation}
Then by \eqref{eqn_zeta_o} and \eqref{eqn_zeta_ao} we get
 \begin{align*}
&\left\|h \mid_{  s\ge R'}\right\|_{k,2,\alpha} \leq \left\|(h-\zeta)\mid_{s\geq R'}\right\|_{k,2,\alpha}+ \|\zeta |_{s\geq R'}\|_{k,2,\alpha} \\
& \leq C {e^{-(\fc-\alpha)(R'-  R)}}\left\|h-\zeta|_{s\geq R}  \right\|_{2,\alpha}+ C \left(e^{-( \fc -\alpha) R}  \left\|h+\hat h_{0}  \right\|_{k,2,\alpha}  +\|\eta|_{s\geq R}\|_{k-1,2,\alpha} \right) \\
& \leq C\left( {e^{-(\fc-\alpha)(R'-  R)}} +  e^{-( \fc -\alpha) R} \right)  \left\|h +\hat h_{0}   \right\|_{k,2,\alpha}+C\|\eta|_{s\geq R}\|_{k-1,2,\alpha}.
 \end{align*}
 We obtain the estimate \eqref{t_ker_1}.
\end{proof}

\section{\bf Gluing estimates}\label{gluing_analysis}\label{s_glu}
\v

Let $(\kappa_o, b_o)=(\kappa_o, a_o, u)=(\kappa_o, j_o, {\bf y}_o, u)$ be a perturbed $(j_o,J)$-holomorphic map from $\Sigma$ to $M$.
For any $(\kappa,h+\hat h_{0})\in K\times \mathcal{W}^{k,2,\alpha}_u,$
 where $h=(h_{1},h_{2})\in W^{k,2,\alpha}_u$, we define
\begin{equation} \label{norm}
\|(\kappa,h)\|_{k,2,\alpha}=|\kappa|+\|h_1\|_{k,2,\alpha} + \|h_2\|_{k,2,\alpha},\;\;\;\;
\|(\kappa,h+\hat h_{0})\|_{\mathcal{W},k,2,\alpha}=\|(\kappa,h)\|_{k,2,\alpha}+|h_{0}|.\end{equation}
 For any $(\kappa,h_{(r)})\in K\times W_{r,u_{(r)}}^{k,2,\alpha}$
we define
\begin{equation} \label{norm-1}
\|(\kappa,h_{(r)})\|_{k,2,\alpha,r}=|\kappa|+\|h_{(r)}\|_{k,2,\alpha,r}.
\end{equation}
In this section and the next section we derive gluing estimates. To simplify notations we let C denote a generic constant whose value may change from line-to-line, but is independent of $(r)$.

\v
\subsection{Estimates of right inverse}

First we recall the definition of $Q'_{( \kappa_{o}, b_{(r)})}.$ Given
$\eta\in L^{k-1,2,\alpha}_{r,u_{(r)}}$ denote
\begin{equation*}
\left(\eta_1(s_1,t_1),\eta_2(s_2,t_2)\right) = \left(\beta_{1;2}(s_1)\eta(s_1,t_1), \beta_{2;2}(s_2)\eta(s_2,t_2)\right),\end{equation*}
\begin{equation*}
Q_{(\kappa_{o},b_o)}(\eta_1,\eta_2)=(\kappa,h)=(\kappa,(h_1,h_2)),\;\;h_{i}\in W^{k,2,\alpha}(\Sigma_{i};u_{i}^{\ast}TM ).\end{equation*}
Then
\begin{equation*}
Q'_{( \kappa_{o}, b_{(r)})}\eta:=(\kappa, h_{(r)})=(\kappa,\beta_{1;r}(s_1)h_{1}(s_1,t_1)+\beta_{2;r}(s_{1}-2r)h_{2}(s_{1}-2r,t_1-\tau)).
\end{equation*}

\begin{lemma}\label{DQ'} For any $\eta\in \lkar ,$ we have
\begin{equation}\label{eqn_DQ'}
D \mathcal{S}_{(\kappa_{o}, b_{(r)})}\circ  Q'_{(\kappa_{o},b_{(r)})}\eta- \eta=\sum (\bar{\partial}\beta_{i;r}) h_{i}+\sum  \beta_{i;r}(F^{1}_{u_{(r)}}-F^{1}_{u_{i}})h_{i}\end{equation}
$$+ \sum \beta_{i;r}(F^{2}_{u_{(r)}}-F^{2}_{u_{i}})\p_{t}h_{i}+(\sum \beta_{i;r}\beta_{i;2}-1)\eta
.$$\end{lemma}
\n{\bf Proof:}
Since $D \mathcal{S}_{(\kappa_{o}, b_{o})}(\kappa, h)=d\mathfrak{i}_{(\kappa_o,b_o)}(\kappa,h)+D_{u}h=(\eta_1,\eta_{2})$ we have
\begin{equation}\label{app_DS_right_inverse}
 D{\mathcal S}_{(\kappa_{o}, b_{(r)})}\circ  Q'_{(\kappa_{o},b_{(r)})}\eta=\eta\;\;\;\;\;\;for \;\;|s_{i}|\leq \tfrac{r}{2}.\end{equation}
It suffices to calculate the left hand side in the annulus $\{\frac{r}{2}\leq |s_i|\leq \frac{3r}{2}\}.$ By choosing $r$ large enough we may assume that $\{\frac{r}{2}\leq |s_i|\leq \frac{3r}{2}\}\subset \Sigma\setminus \Sigma(R_0)$.
 Note that in this annulus
 $$d\mathfrak{i}_{(\kappa_o,b_o)}=0,\;\;\;D\mathcal S_{(\kappa_o,b_{o})}h_i= D_{u_i}h_i=\eta_{i},$$
 $$\beta_{1;r} D_{u_{1}}h_{1}+\beta_{2;r} D_{u_{2}}h_{2}= \sum_{i=1}^{2} \beta_{i;r}\beta_{i;2} \eta,\;\;
 D_{u_{i}}=\bar{\partial}_{j_{o},J_{0}}+F^{1}_{u_{i}}+ F^{2}_{u_{i}}\frac{\partial} {\partial t}. $$
We have, in this annulus,
\begin{align}\label{approximate_difference}
&D \mathcal{S}_{(\kappa_{o}, b_{(r)})}\circ  Q'_{(\kappa_{o},b_{(r)})}\eta- \eta =  \bar{\partial}_{j_{o},J_0}h_{(r)}+ F^{1}_{u_{(r)}}h_{(r)}+ F^{2}_{u_{(r)}}\p_{t}h_{(r)}- \eta \\
&=\sum \beta_{i;r}\bar{\partial}_{j_{o},J_0}h_{i} +\sum \beta_{i;r} F^{1}_{u_i}h_{i}
+\sum (\bar{\partial}\beta_{i;r}) h_{i}+  \sum \beta_{i;r}(F^{1}_{u_{(r)}}-F^{1}_{u_{i}})h_{i}-\eta \nonumber \\
   &\;\;\;+\sum \beta_{i;r}(F^{2}_{u_{(r)}}-F^{2}_{u_{i}})\p_{t}h_{i}+\sum \beta_{i;r} F^{2}_{u_{i}} \p_{t}h_{i}\nonumber\\
&= \sum (\bar{\partial}\beta_{i;r}) h_{i}+  \sum \beta_{i;r}(F^{1}_{u_{(r)}}-F^{1}_{u_{i}})h_{i}+ \sum \beta_{i;r}(F^{2}_{u_{(r)}}-F^{2}_{u_{i}})\p_{t}h_{i}+(\sum \beta_{i;r}\beta_{i;2}-1)\eta.\nonumber
\end{align}
$\Box$
\begin{lemma}\label{aright_inverse_after_gluing}
 Suppose that   $D{\mathcal S}_{(\kappa_o,b_o)}|_{K\times \wka}:K\times   \wka \rightarrow \lka $ is surjective.
Denote by $Q_{(\kappa_{o},b_o)}:\lka \rightarrow K\times \wka$ a bounded right inverse of $D{\mathcal
S}_{(\kappa_o,b_o )}.$ Then $D{\mathcal
S}_{(\kappa_{o}, b_{(r)})}$ is surjective for $r$ large enough. Moreover, there is a right inverse $ Q_{(\kappa_{o},b_{(r)})}$
such that
\begin{equation}
\label{right_estimate}
\| Q_{(\kappa_{o},b_{(r)})}\|\leq  \mathsf{C}
\end{equation}
 for some constant $ \mathsf{C}>0$ independent of $ r $.
\end{lemma}
\n{\bf Proof:} We first show that
\begin{eqnarray}
\label{approximate_right_inverse_estimate_1}
\|Q'_{( \kappa_{o}, b_{(r)})}\|\leq C \\
\label{approximate_right_inverse_estimate_2}
\|D{\mathcal S}_{( \kappa_{o}, b_{(r)})}\circ Q'_{( \kappa_{o}, b_{(r)})}-Id\|\leq \frac{2}{3}
\end{eqnarray}
for some constant $C>0$ independent of $ r $.  By the definition \eqref{norm h} and $0\leq \beta_{i;r}\leq1 $ we have
   \begin{align}\label{h_r_o}
   |(h_{(r)})_{0}| \leq &  e^{-\alpha r}\max_{t\in S^1} | e^{\alpha r}h_{(r)}(r,t)| \leq  e^{-\alpha r}\max_{t_i\in S^1} \sum | e^{\alpha r}h_{i}(r,t_i)|   \\ \nonumber
 \leq  & C e^{-\alpha r} \sum_{i=1,2}\|e^{\alpha |s_{i}|}h_{i}(s_{i},t_{i})|_{r-1\leq  s_{1}\leq r+1}\|_{k,2}    \leq C e^{-\alpha r} \sum \|h_{i}\|_{k,2,\alpha}
   \end{align}
where we used  the  Sobolev embedding theorem in the second inequality.
By $\|Q_{(\kappa_{o},b_o)}\|\leq C$  and the definition of $\|\cdot\|_{k,2,\alpha,r}$ we have
\begin{align*}
\|(\kappa,h_{(r)})\|_{k,2,\alpha,r} &= |\kappa|+ \sum\|\beta_{i;2} (h_{(r)}-(\hat{ h}_{(r)})_{0})\|_{k,2,\alpha}+|(h_{(r)})_{0}| \\
&\leq |\kappa|+ \sum\|\beta_{i;2} h_{(r)}\|_{k,2,\alpha}+C\sum \|h_{i}\|_{k,2,\alpha}   \\
 &\leq
2(C+1)  \|(\kappa,h_{1},h_{2})\|_{k,2,\alpha} \leq C \|(\eta_1,\eta_{2})\|_{k-1,2,\alpha}\leq C\|\eta\|_{k-1,2,\alpha,r},
\end{align*}
where we used \eqref{h_r_o} in the second inequality. Then \eqref{approximate_right_inverse_estimate_1} follows.
\v
We prove \eqref{approximate_right_inverse_estimate_2}. It follows from \eqref{eqn_DQ'} that
\begin{align}
\left\|D \mathcal{S}_{(\kappa_{o}, b_{(r)})}\circ  Q'_{(\kappa_{o},b_{(r)})}\eta-\eta\right\|_{k-1,2,\alpha,r}
&\leq \frac{1}{2}\|\eta\|_{k-1,2,\alpha,r}+\frac{C}{r}\sum \|h_i\|_{k,2,\alpha}  \\
&  \leq   \left(\frac{C}{r}+\frac{1}{2}\right)\|\eta\|_{k-1,2,\alpha,r}. \nonumber
\end{align}
where we used $ \frac{1}{2} \leq \sum \beta_{i;r}\beta_{i;2}\leq \sqrt{2},\;\;\; \sum_{i=1}^{2}|F^{i}_{u_{(r)}}|\leq Ce^{-\fc\frac{r}{2}}\mbox{ in }\;\left\{\tfrac{r}{2}\leq s_1\leq \tfrac{3r}{2 }\right\}$ in the first inequality,
and used $\|Q_{(\kappa_{o},b_o)}\|\leq C$
in the last inequality. Then \eqref{approximate_right_inverse_estimate_2} follows when  $r $ large enough. \v
The estimate \eqref{approximate_right_inverse_estimate_2} implies that
 $D{\mathcal S}_{(\kappa_{o}, b_{(r)})}\circ  Q'_{(\kappa_{o},b_{(r)})}$ is invertible, and   a right inverse $ Q_{(\kappa_{o},b_{(r)})}$  of $D{\mathcal S}_{(\kappa_{o}, b_{(r)})}$ is given by
 \begin{equation}
 \label{express_right_inverse-1}
 Q_{(\kappa_{o},b_{(r)})}= Q'_{(\kappa_{o},b_{(r)})}( D{\mathcal S}_{(\kappa_{o}, b_{(r)})}\circ  Q'_{(\kappa_{o},b_{(r)})})\inv.
 \end{equation}
Then the Lemma follows.  $\Box$

\v

\subsection{Isomorphism between $\ker D \mathcal{S}_{( \kappa_{o}, b_{o})}$ and $\ker D \mathcal{S}_{( \kappa_{o},b_{(r)})}$}\label{isomorphism-1}
\v
For any $(\kappa,h+\hat h_{0})\in \ker D \mathcal {S}_{(\kappa_o,b_o)},$
 where $h=(h_1 ,h_2  )\in W^{k,2,\alpha}_u,$ we set
\begin{equation}
 h_{(r)}=\beta_{1;r}(s_1)h_{1}(s_1,t_1)+\beta_{2;r}(s_{1}-2r)h_{2}(s_{1}-2r,t_{1}-\tau)+\hat h_{0},
\end{equation}
Recall that $I_{r}:\ker D \mathcal{S}_{( \kappa_{o}, b_{o})}\to \ker D \mathcal{S}_{( \kappa_{o},b_{(r)})}$ defined by
 $$
I_{r}(\kappa,h+\hat h_{0})=(\kappa,h_{(r)})- Q_{(\kappa_{o},b_{(r)})}\circ D\mathcal{S}_{(\kappa_o,b_{(r)})}(\kappa,h_{(r)}).
$$

\begin{lemma}\label{lem_est_I_r}
$I_{r}$ is an isomorphisms for $r$ large enough, and
$$\|I_{r}\|\leq \mathsf C ,$$
for some constant $\mathsf C>0$ independent of $r$.
\end{lemma}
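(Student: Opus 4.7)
The plan has three parts. First, I verify that $I_r$ tautologically lands in $\ker D\mathcal{S}_{b_{(r)}}$: applying $D\mathcal{S}_{b_{(r)}}$ to \eqref{definition_I} and using $D\mathcal{S}_{b_{(r)}} \circ Q_{b_{(r)}} = \mathrm{Id}$ from Lemma \ref{right_inverse_after_gluing} gives zero, so $I_r$ is a well-defined linear map into $\ker D\mathcal{S}_{b_{(r)}}$.

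Second, for the uniform norm bound, the triangle inequality gives
\[
\|I_r(\kappa, h, h_0)\| \le \|(\kappa, h_{(r)})\| + \mathcal{C}_2\,\|D\mathcal{S}_{b_{(r)}}(\kappa, h_{(r)})\|_{p,\alpha,r}.
\]
The first summand is bounded by $\|(\kappa, h, h_0)\|$ directly from \eqref{norm}--\eqref{norm-1}, using $|\beta_i| \le 1$ together with the fact that $\hat h_0$ contributes via $|h_0|$ only. For the second, note that $(\kappa, h, h_0) \in \ker D\mathcal{S}_b$ forces $\kappa + D_{u_i}(h_i + \hat h_0) = 0$ on each side; hence on the complement of the neck $\{\tfrac{lr}{2}\le |s_i|\le \tfrac{3lr}{2}\}$ the glued section satisfies $D\mathcal{S}_{b_{(r)}}(\kappa, h_{(r)}) = 0$, while on the neck the calculation mirrors \eqref{approximate_difference}, producing the error
\[
(\bar\partial\beta_1)(h_1 - \hat h_0) + (\bar\partial\beta_2)(h_2 - \hat h_0) + \sum_i \beta_i(S_{u_{(r)}} - S_{u_i})\,h_{(r)}.
\]
The cutoff terms are $O(r^{-1})$ in amplitude and act on the exponentially-small pieces $h_i - \hat h_0$ over the long neck, while the $S$-difference terms inherit the exponential decay \eqref{expontent_decay_of_s}. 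Both contributions are bounded by $O(r^{-1})\|(\kappa, h, h_0)\|$, which, combined with the bound $\|Q_{b_{(r)}}\| \le \mathcal{C}_2$, yields the uniform bound $\|I_r\| \le \mathcal{C}_5$.

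Third, for bijectivity: invariance of the Fredholm index under the gluing deformation (with the $\bh = T_q M$ summand in the domain accounting for the asymptotic matching condition along the glued neck) ensures $\dim \ker D\mathcal{S}_b = \dim \ker D\mathcal{S}_{b_{(r)}}$, reducing matters to injectivity. Rewriting the second-step estimate as $\|I_r(\kappa, h, h_0) - (\kappa, h_{(r)})\| = O(r^{-1})\|(\kappa, h, h_0)\|$, and noting that the cutoff assignment $(\kappa, h, h_0) \mapsto (\kappa, h_{(r)})$ admits a uniformly bounded left inverse (recover $h_0$ via an $L^2$-average of $h_{(r)}$ on the central circle $\{s_1 = lr\}$, and then $h_i$ from $h_{(r)} - \hat h_0$ on the regions $|s_i| \le \tfrac{lr}{2}$, where $\beta_i \equiv 1$ and $u_{(r)} = u_i$), we obtain
\[
\|I_r(\kappa, h, h_0)\| \ge (c - O(r^{-1}))\,\|(\kappa, h, h_0)\|
\]
for a constant $c > 0$ independent of $r$, so injectivity (hence bijectivity) holds once $r$ is large. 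The main obstacle in this last step is establishing the uniform lower bound $|h_0| \le C\,\|h_{(r)}\|_{1,p,\alpha,r}$ via trace-type estimates on the cylinder adapted to the weighted norm, together with confirming index-matching in the perturbed $K_b$-thickened setting; once these are in place the rest assembles from the calculations already carried out in Lemma \ref{right_inverse_after_gluing}.
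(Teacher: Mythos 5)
Your strategy diverges from the paper's in a significant way. The paper does \emph{not} invoke any index-invariance-under-gluing result. Instead it constructs an explicit map $I'_r: \ker D\mathcal S_{b_{(r)}} \to \ker D\mathcal S_b$ in the reverse direction (by cutting $h$ with a cutoff $\beta[h]$ that interpolates to $\hat h_0$ and then projecting via $\mathrm{Id} - Q_b \circ D\mathcal S_b$), and then shows that \emph{both} $I_r$ and $I'_r$ are injective. Two injective linear maps between finite-dimensional spaces in opposite directions automatically force the dimensions to agree, so no index formula is needed; the isomorphism statement falls out for free. Your reduction to injectivity via ``invariance of the Fredholm index under the gluing deformation'' is plausible, but in the thickened $K_b$-setting with weighted Sobolev spaces and the $\mathcal W^{1,p,\alpha}$ domain containing the $\hat h_0$ summand, this is itself a nontrivial claim that you would have to prove rather than cite. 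The paper's two-sided construction is more economical precisely because it sidesteps this.

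The more serious gap is in your injectivity lower bound. You propose recovering $(\kappa, h, h_0)$ from $(\kappa, h_{(r)})$ by reading off $h_0$ on the middle circle and $h_i$ from the regions $|s_i| \le \tfrac{lr}{2}$. But the pregluing map $(\kappa,h,h_0) \mapsto (\kappa, h_{(r)})$ is \emph{not} injective: it discards all of $h_i$ on $|s_i| \ge \tfrac{3lr}{2}$, and modifies $h_i$ on the neck $\tfrac{lr}{2} \le |s_i| \le \tfrac{3lr}{2}$, so there is no left inverse, uniform or otherwise. The ingredient that rescues the argument --- and that your proposal lacks --- is finite-dimensionality of $\pi_2(\ker D\mathcal S_b)$. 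The paper fixes an orthonormal basis $f_1,\dots,f_d$ of this kernel and observes that $F = \sum f_i^2 e^{2\alpha|s|}$ is integrable; hence for $R_0$ large enough every kernel element has a uniformly small tail, $\|h|_{|s_i|\ge R_0}\|_{2,\alpha} \le \epsilon' \|h\|_{2,\alpha}$, giving the lower bound $\|h_{(r)}\|_{2,\alpha,r} \ge (1-\epsilon')\|h\|_{2,\alpha} + |h_0|$ in \eqref{lower_bound_h_r}. This is what makes the pregluing ``approximately injective'' on the kernel even though it is far from injective on the ambient function space. Your flagged ``trace-type estimate for $|h_0|$'' is, by contrast, not actually an obstacle: $h_0 = \int_{S^1} h_{(r)}(lr,t)\,dt$ appears directly in the definition of the norm $\|\cdot\|_{1,p,\alpha,r}$, so $|h_0| \le \|h_{(r)}\|_{1,p,\alpha,r}$ is built in. The real missing step is the tail control, and for that the finite-dimensional concentration argument is the mechanism you need.

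A smaller remark: your norm bound (second step) is fine and parallels the paper's estimate \eqref{delta_I}, though you should note that the estimate on the neck involves both the $\bar\partial\beta_i$ terms of size $O(1/r)$ acting on $h_i - \hat h_0$ \emph{and} the $(S_{u_{(r)}} - S_{u_i})$ terms decaying like $e^{-\mathfrak c\, lr/2}$; the paper's Step 1 also records, in \eqref{delta_I'}, that the constant in front of $\|\beta h\|$ is $4e^4|\alpha| + C_3 e^{-\mathfrak c\, lr/2}$, so the restriction $\alpha \le \tfrac{1}{16e^4}$ appearing in the main Theorem originates here --- that threshold is not visible in your sketch.
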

\begin{proof} The proof is basically a similar gluing
argument as in \cite{D}. The proof is
divided  into 2 steps. \vskip 0.1in \noindent {\bf Step 1}.  We define a map $I'_{r}:\ker D\mathcal{S}_{( \kappa_{o},b_{(r)})}\longrightarrow \ker D\mathcal{S}_{( \kappa_{o}, b_{o})} $
 and show that $I'_{r}$ is injective for $r$ large enough. For any $(\kappa,h)\in \ker D\mathcal{S}_{( \kappa_{o},b_{(r)})}$ we denote by $h_{i}$ the restriction of $h$
 to the part $|s_{i}|\leq  r +1,$ we get a pair $(h_{1},h_{2}).$  Set
 \begin{equation}
 h_{0}=\int_{S^1}h\left(r,t\right)dt.
 \end{equation}
  We denote
  $$\tilde{h} =\left((h_{1}- \hat h_{0})\beta_{1;2} + \hat h_{0},\;\;(h_{2}- \hat h_{0})\beta_{2;2} + \hat h_{0}\right)$$
and define
$I'_{r}:\ker D\mathcal{S}_{( \kappa_{o},b_{(r)})}\longrightarrow  \ker D\mathcal{S}_{( \kappa_{o}, b_{o})}$ by
\begin{equation}
\label{definition_I'}
I'_{r}(\kappa,h)= (\kappa,\tilde{h})-Q_{( \kappa_{o}, b_{o})}\circ D\mathcal{S}_{( \kappa_{o}, b_{o})}(\kappa,\tilde{h}),
\end{equation}
 where  $Q_{( \kappa_{o}, b_{o})}$ denotes the right inverse of $D\mathcal{S}_{( \kappa_{o}, b_{o})}|_{K\times \wka}: K \times \wka\rightarrow \lka.$  Since
$D\mathcal{S}_{( \kappa_{o}, b_{o})}\circ Q_{( \kappa_{o}, b_{o})}=I,$ we have
$I'_{r}(\ker D\mathcal{S}_{( \kappa_{o},b_{(r)})})\subset  \ker D\mathcal{S}_{( \kappa_{o}, b_{o})}.$
 \v
Let $(\kappa,h)\in \ker D\mathcal{S}_{( \kappa_{o},b_{(r)})}$ such that $I'_{r}(\kappa,h)=0.$ First we prove $h_{0}=0.$ Since $d\mathfrak{i}_{(\kappa_{o},b_{o})}(\kappa,h)$ and $D_{u_{i}} (\beta_{i;2}(h_{i}-\hat{h}_{0}))$ have compact support and $F^l_{u}\in \wka,l=1,2$, we have
$D\mathcal{S}_{( \kappa_{o}, b_{o})}(\kappa,\tilde{h}) \in \lka.$ Then $Q_{( \kappa_{o}, b_{o})}\circ D\mathcal{S}_{( \kappa_{o}, b_{o})}(\kappa,\tilde{h})\in K\times \wka.$ By \eqref{definition_I'} we have $\hat h_{0}=0.$

Next we estimate  $\| (\kappa, \tilde{h}) \|_{k,2,\alpha}.$
From \eqref{definition_I'}, by $I'_{r}(\kappa,h)=0,$ we have
$$\| (\kappa, \tilde{h}) \|_{k,2,\alpha}
 \leq C  \|d\mathfrak{i}_{( \kappa_{o}, b_{o})}(\kappa ,\tilde{h})+ D_u(\tilde{h})\|_{k-1,2,\alpha}  $$
$$
=\left\| d\mathfrak{i}_{( \kappa_{o}, b_{o})}(\kappa ,\tilde{h})+  D_u(\tilde{h})-\left(  \beta_{1;2}  \left(
    D_{u_{(r)}} h+d\mathfrak{i}_{( \kappa_{o},b_{(r)})}(\kappa ,h)\right) ,  \beta_{2;2}  \left(
    D_{u_{(r)}} h+d\mathfrak{i}_{( \kappa_{o},b_{(r)})}(\kappa ,h)\right)  \right)  \right\|_{k-1,2,\alpha}
$$
for some constant $C>0,$  where we used $(\kappa,h)\in \ker D\mathcal {S}_{b_{(r)}}$ in the last inequality.
 We choose $r>4R_{0}$.
As $$d\mathfrak{i}_{( \kappa_{o},b_{(r)})}(\kappa ,h) = d\mathfrak{i}_{( \kappa_{o}, b_{o})}(\kappa ,\tilde{h}),\;\;\;d\mathfrak{i}_{( \kappa_{o},b_{(r)})}(\kappa ,h)|_{|s_i|\geq R_{0}}= d\mathfrak{i}_{( \kappa_{o}, b_{o})}(\kappa ,\tilde{h})|_{|s_i|\geq R_{0}}=0$$ and $\beta_{i;2} |_{|s_{i}|\leq r-1}=1$ we have $(\beta_{1;2} d\mathfrak{i}_{( \kappa_{o},b_{(r)})}(\kappa ,h),\beta_{2;2} d\mathfrak{i}_{( \kappa_{o},b_{(r)})}(\kappa ,h))= d\mathfrak{i}_{( \kappa_{o}, b_{o})}(\kappa ,\tilde{h})$.
Set
$(\bar{\partial}\beta) h=(\bar{\p} \beta_{1;2} h_{1}, \bar{\p} \beta_{2;2} h_{2}).$ Therefore
\begin{eqnarray*}
 \| (\kappa, \tilde{h}) \|_{k,2,\alpha}
 &\leq &   C  \|(\bar{\partial}\beta)  h \|_{k-1,2,\alpha} +C \sum_{i=1}^{2}\|
\beta_{i;2} (F^{1}_{u_{i}} - F^{1}_{u_{(r)}}) h  \|_{k-1,2,\alpha} \\
&& +C \sum_{i=1}^{2}\|
\beta_{i;2}(F^{2}_{u_{i}} - F^{1}_{u_{(r)}}) \p_{t}h  \|_{k-1,2,\alpha} .
\end{eqnarray*}
Note that
 $F^l_{u} = F^l_{u_{(r)}},l=1,2$  in $\{|s_{i}|\leq \; \tfrac{r}{2}\} . $
By exponential decay of $F^l_{u},F^l_{u_{(r)}},$ $l=1,2,$ in $\{\frac{ r}{2}\leq s_{1}\leq \frac{3r}{2}\}$, there exists a constant  $C>0$ such that
$$\sum_{i=1}^{2}\|
\beta_{i;2} (F^{1}_{u_{i}} - F^{1}_{u_{(r)}}) h  \|_{k-1,2,\alpha}  +\sum_{i=1}^{2}\|
\beta_{i;2}(F^{2}_{u_{i}} - F^{1}_{u_{(r)}}) \p_{t}h  \|_{k-1,2,\alpha}  \leq Ce^{-\mathfrak{c}\tfrac{r}{2}}\|\beta h\|_{k,2,\alpha}.$$
 Since $(\bar{\partial}\beta_{1;2}) h_{1}$ supports in
$r-1\leq s_1 \leq r+ 1$, and over this part
$$|\bar{\partial}\beta_{1;2}|\leq 4,\;\;\;\;r-  1\leq |s_2| \leq r+1,\;\;\;e^{2\alpha|s_1|}\leq e^{2\alpha}e^{2\alpha|s_2|},\;\;\;\beta_{1;2}+\beta_{2;2}\geq 1,\;\;\;h_{1}=h_{2},$$
we obtain
\begin{align*}
&\|(\bar{\partial}\beta_{1;2})h_{1}\|_{k-1,2,\alpha}\leq  C \|h_{1}|_{r-1\leq s_{1}\leq r+1}\|_{\Sigma_{1},k-1,2,\alpha} \leq  C \|\sum\beta_{i}h_{i}|_{r-1\leq s_{1}\leq r+1}\|_{\Sigma_{1},k-1,2,\alpha}\\
&\leq   C\sum \|\beta_{i;2}h_{i}|_{r-1\leq |s_{i}|\leq r+1}\|_{k-1,2,\alpha}=   C\|h|_{r-1\leq s\leq r+1}\|_{k-1,2,\alpha,r}\leq Ce^{-(\fc -\alpha)\frac{r}{2}}  \|h\|_{k-1,2,\alpha,r}
\end{align*}
where we have used Corollary \ref{tube_ker_c-1} with $R'=r-1,R=\frac{r-1}{2}$ and $\eta=0$ in the last inequality.
Similar inequality for $(\bar{\partial}\beta_{2;2})h_{2}$
also holds. So we have
$$\|(\bar{\partial}\beta)h\|_{k-1,2,\alpha} \leq Ce^{-(\fc -\alpha)\frac{r-1}{2}}  \|h\|_{k-1,2,\alpha,r}=Ce^{-(\fc -\alpha)\frac{r-1}{2}} \|\tilde{h}\|_{k,2,\alpha}.$$
Hence
\begin{equation}\label{delta_I'}
\| (\kappa, \tilde{h}) \|_{k,2,\alpha}\leq C(e^{-(\fc -\alpha)\frac{r}{2}}+
e^{- \tfrac{\mathfrak{c}r}{2}})\|\tilde{h}\|_{k,2,\alpha} \leq   1/2  \|\tilde{h}\|_{k,2,\alpha}  \end{equation}
for some constant $C>0$, here we choosed    $r$   large enough  such
that   $Ce^{- (\fc -\alpha)\frac{r}{2}} < 1/4$. Then $I'_{r}(\kappa,h)=0$ and
\eqref{delta_I'} gives us $$ |\kappa|=0, \;\;  \|\tilde{h}\|_{k,2,\alpha}=0 .$$
Note that $\tilde \beta_{i;r}h_{i}|_{|s_{i}|\leq r}=h|_{|s_{i}|\leq r}.$ It follows that $\kappa = 0, \;\; h=0$. So $I'_r$ is injective.
\vskip 0.1in
\noindent
{\bf Step 2}. Let $(\kappa,h+\hat h_{0})\in \ker D\mathcal{S}_{(\kappa_{o},b_{o})}$ with $I_r(\kappa,h+h_{0})=0$.
Since $\|Q_{( \kappa_{o},b_{(r)})}\|$ is uniformly bounded, from \eqref{definition_I}  and \eqref{right_estimate}, we have
$$\|(\kappa, h_{(r)})\|_{k,2,\alpha,r} =\|I_r(\kappa,h+h_{0}) - (\kappa, h_{(r)})\|_{k,2,\alpha,r}\leq C\|D\mathcal {S}_{( \kappa_{o},b_{(r)})} (\kappa, h_{(r)})\|$$
for some constant $C>0.$ By a similar culculation as in \eqref{approximate_difference}  we obtain
\begin{align}\label{eqn_a_DK}
&D\mathcal {S}_{( \kappa_{o},b_{(r)})} (\kappa, h_{(r)})=\bar{\partial}_{j_{o},J_0}h_{(r)}+ F^{1}_{u_{(r)}}h_{(r)}+ F^{2}_{u_{(r)}}\p_{t}h_{(r)}+d\mathfrak{i}_{( \kappa_{o},b_{(r)})}(\kappa,h_{(r)})  \\
&=\sum (\bar{\partial}\beta_{i;r}) h_{i}+  \sum \beta_{i;r}(F^{1}_{u_{(r)}}-F^{1}_{u_{i}})h_{i} +\sum \beta_{i;r}(F^{2}_{u_{(r)}}-F^{2}_{u_{i}})\p_{t}h_{i}+(F_{u_{(r)}}^{1}-\sum \beta_{i;r} F^{1}_{u_{i}})\hat h_{0}  \nonumber
\end{align} where we used $D\mathcal {S}_{( \kappa_{o},b_{(r)})}(\kappa,h+\hat h_{0})=0.$ Then we have
\begin{equation}\label{delta_I}
\| (\kappa, h_{(r)})\|_{k,2,\alpha,r}\leq \frac{C}{r}
(\|h\|_{k,2,\alpha} + |h_0|)\end{equation} for some constant $C>0$.
Since $d\mathfrak{i}_{ (\kappa_{o},u)}|_{|s_i|\geq R_0}=d\mathfrak{i}_{ (\kappa_{o},u_{(r)})}|_{|s_i|\geq R_0}=0,$  for any $(\kappa,h)\in \ker D\mathcal S_{( \kappa_{o}, b_{o})},$ restricting in $\{|s_i|\geq R_0\}$, we have
$$
 \overline{\partial }_{J_0}(h+\hat h_0) +F^{1}_{u}(h+\hat h_0)+F^{2}_{u}\p_{t}(h+\hat h_0)=D\mathcal S_{( \kappa_{o}, b_{o})}(\kappa,h+\hat h_0) =0.
$$
Let $ \epsilon'\in (0,1)$ be a constant. Applying Corollary \ref{tube_ker_c-1} with $R= \max (\frac{-\ln\epsilon'+\ln 2\mathsf C}{\alpha},R_0+2)$ and $R'=2R$, we conclude that
 the restriction of $h$ to $|s_i|\geq 2R$ satisfies
$$\|h|_{|s_i|\geq 2R}\|_{k,2,\alpha}\leq 2\mathsf C e^{-\alpha R}(\|h\|_{k,2,\alpha}+|h_0|) \leq \epsilon'(\|h\|_{k,2,\alpha}+|h_0|),$$
therefore
\begin{equation}\label{lower_bound_h_r}
\|h_{(r)}\|_{k,2,\alpha,r}\geq \|h|_{|s_i|\leq 2R_1}\|_{k,2,\alpha} + |h_0|
\geq (1 - \epsilon')(\|h\|_{k,2,\alpha} + |h_0|),
\end{equation}
for $r>4R$.  Then \eqref{delta_I} and \eqref{lower_bound_h_r} give us $h=0$ and $h_{0}=0$,
and so $\kappa=0$. Hence $I_r$ is injective.
\vskip 0.1in
\noindent
The {\bf step 1} and {\bf step 2} together show that both $I_{r}$ and $I'_r$
are isomorphisms for $r$ large enough.
\end{proof}

\subsection{Gluing maps}\label{gluing_maps}
\vskip
0.1in \noindent
Choose $R_{0}$ large such that
\begin{equation}\label{eqn_ass_R}
 \sum_{i=1}^{2}E(u_i; |s_{i}|\geq \tfrac{R_{0}}{2})\leq \tfrac{\hbar}{8}.
 \end{equation}
where $\hbar$ is the constant in Theorem \ref{tube_exponential_decay}.

\begin{lemma}\label{lem_est_al_hol} Suppose that $\mathcal S (\kappa_{o},b_{o})=0.$   Then there exists a  constant $\mathsf C>0$  independent of $r$ such that for $r>R_0$
\begin{equation}\label{est_app_hol}
\|\mathcal S (\kappa_{o},b_{(r)})\|_{k-1,2,\alpha,r}\leq
\mathsf Ce^{-(\fc-\alpha)\frac{r}{2}}.
\end{equation}
 \end{lemma}

\begin{proof} Since $u_{(r)}|_{\{|s_{i}|\leq \frac{r}{2}\}}=u|_{\{|s_{i}|\leq \frac{r}{2}\}}$, we have $\mathcal S (\kappa_{o},b_{(r)})|_{\{|s_{i}|\leq \frac{r}{2}\}}=0.$
Note that $\mathfrak{i}(\kappa_{o},b_{(r)})=0$ in  $\{ \tfrac{r}{2}\leq  s_{1}\leq \tfrac{3r}{2}\}$. So we get
\begin{align}\label{eqn_al_S}
\mathcal S (\kappa_{o},b_{(r)})=  &\beta\left(3-\tfrac{4s_1}{r}\right) \bar{\partial}_{j_{o},J}(u_1(s_1,t_1) ) +
\beta\left(\tfrac{4s_1}{r}-5\right)\bar{\partial}_{j_{o},J}u_2(s_1-2r,t_1-\tau) ) \\\nonumber
&+\tfrac{\p }{\p s_{1}}(\beta\left(3-\tfrac{4s_1}{r}\right)) (u_1(s_1,t_1)-u_1(q))+
\tfrac{\p }{\p s_{1}}(\beta\left(\tfrac{4s_1}{r}-5\right)) (u_2(s_1-2r,t_1-\tau)-u_2(q)).
\end{align}
By \eqref{eqn_ass_R} and Theorem \ref{tube_exponential_decay} we can obtain
\begin{equation}\label{eqn_exp_decay}
 \left|d u (s_{i},t_{i})\right|\leq \mathsf Ce^{-\fc|s_{i}|}\;\;\;\;\;\;\;\;\;\mbox{ for any }\tfrac{r}{2}\leq  |s_{i}|\leq \tfrac{3r}{2}.
\end{equation}
Then \eqref{est_app_hol} follows from the exponential decay of $|du|$.
\end{proof}

\v
For fixed $(r)$ we consider the family of maps:
\begin{align*}& \mathcal{F}_{(r)}: K  \times W^{k,2,\alpha}_{r,u_{(r)}}
\rightarrow L^{k-1,2,\alpha}_{r,u_{(r)}},\;\;\;\mathcal{F}_{(r)}(\kappa,h)=P_{b, b_{(r)}}\left(\bar{\partial}_{j_{o},J}v
+ \mathfrak{i}(\kappa,b)\right),
\end{align*}
where $b=(r,\tau,a_{o}, v),\;\;v=\exp_{u_{(r)}}h$  and
\begin{equation*}
P_{b, b_{(r)}}=\Phi_{u_{(r)}}(h)^{-1}:W_{r}^{k-1,2,\alpha}(\Sigma_{(r)}, v^{*}TM\otimes \wedge^{0,1}_{j_{o}}T^{*}\Sigma_{(r)})\to \lkar.
\end{equation*}
Let $\mathbf g_{o}$ be the metrics on $(\Sigma,j_{o}) $ as in Section \S\ref{s_intro_1}. We denote by $c_{\mathbf{g}_{o}} $ the norm of the
Sobolev embedding $W^{k,2}(\Sigma  ,\mathbf{g}_{o})\to C^{k-2}(\Sigma ,\mathbf{g}_{o})$.
\v
For every $(r)$, $\mathcal{F}_{(r)}(\kappa,h)$ is a smooth function of $(\kappa,h)$.
 Consider the path $\mathbb R\to K    \times  \wkr : \lambda\to (\lambda \hat \kappa, \lambda \zeta).$

\begin{lemma} \label{difference_DF_estimates_gw}
\begin{description}
\item[1.] $\frac{d}{d\lambda } \mathcal{F}_{(r)}(\lambda\hat \kappa,\lambda\zeta) |_{\lambda=0}= d \mathcal{F}_{(r)}(0)(\hat \kappa,  \zeta)= D\mathcal{S}_{ ( \kappa_{o},b_{(r)}) }(\hat \kappa,  \zeta).$
\item[2.] For every constant $\mathsf{d}_{1}>0$ there exists a constant $\mathsf C>0$ such that the following holds for every metric $\mathbf g_{o,(r)}$ on $\Sigma_{(r)}$ with  $c_{\mathbf g_{o,(r)}}<\mathsf{d}_{1}$:
if $u_{(r)}\in W^{k,2,\alpha}_{r}(\Sigma_{r},M)$ and
$(\kappa,h)\in {K} \times  \wkr$ satisfying
\begin{equation}\label{ass_du}
\|du_{(r)}\|_{k-1,2,\alpha,r}\leq \mathsf{d}_{1} ,\;\;\; \|(\kappa,h)\|_{k,2,\alpha,r}\leq \mathsf{d}_{1}
\end{equation}
then
\begin{equation} \label{difference_of_DF_estimates}
\|d\mathcal{F}_{(r)}(\kappa ,h)-d\mathcal{F}_{(r)}(0  )\| \leq \mathsf C\|(\kappa,h)\|_{k,2,\alpha,r}.
\end{equation}
Here $\|\cdot\|$ denotes the operator norm on $L(K\times \wkr, \lkar)$.
\end{description}
\end{lemma}
\v\n
The proof is similar to the proofs of Proposition 3.1.1 and Proposition 3.5.3 in \cite{MS}, we omit it here.
\v
Now we check that $\mathcal F_{(r)}$ satisfies the assumption of Theorem \ref{details_implicit_function_theorem}.
There exists two constants  $\epsilon_{o}>0$ and $C_{o}>0$ such that  $\mathbf{g}_{o}$ is a complete Riemannian metric with injectivity radius $\inj(\Sigma,\mathbf{g}_{o})>\epsilon_{o}$ and sectional curvature $|Rm(\Sigma,\mathbf{g}_{o})|\leq C_{o}$. Then there exists a constant $C>0$ depending only on $\epsilon_{o}$ and $C_{o}$ such that $c_{\mathbf g_{o}}<C$
 ( for the Sobolev embedding theorem see \cite{A} ).
Then we have
\begin{equation}
\|h\|_{C^{k-2}(\Sigma_{(r)})}\leq \sum \|\beta_{i;2} h \|_{C^{k-2}(\Sigma_{i})}\leq \sum c_{\mathbf{g}_{o}}\|\beta_{i;2} h \|_{W^{k,2}(\Sigma_{i})}\leq 2c_{\mathbf{g}_{o}}\|  h \|_{W^{k,2}(\Sigma_{(r)})}.
\end{equation}
It follows that $c_{\mathbf g_{o,(r)}}<C .$
On the other hand,  by Theorem \ref{tube_exponential_decay} we have
$$
\|du_{(r)}\|_{k-1,2,\alpha,r}\leq \sum\|\beta_{i;2}du \|_{\Sigma_{i},k-1,2,\alpha}+Ce^{-(\fc-\alpha)\frac{r}{2}}\leq C.
$$
Choosing $\mathsf{d}_{1} $ large enough, by  Lemma \ref{difference_DF_estimates_gw}  we have
\begin{equation}\label{uniform_bound_difference_dF}
 \|d\mathcal{F}_{(r)}(\kappa,h) - D\mathcal{S}_{( \kappa_{o},b_{(r)})}\| \leq \mathsf C\|(\kappa,h)\|_{k,2,\alpha,r}.
\end{equation}
By Lemma \ref{lem_est_al_hol} we have
\begin{equation}
\|\mathcal{F}_{(r)}(0)\|_{k-1,2,\alpha,r}= \|\mathcal{S}(\kappa_{o},b_{(r)}) \|_{k-1,2,\alpha,r}\leq   \mathsf C e^{-(\fc-\alpha)\frac{r}{2}}.
\end{equation}
Then $\mathcal{F}_{(r)}$ satisfies the conditions in  Implicit function theorem \ref{details_implicit_function_theorem} when $r$ large enough and  $\|(\kappa,h)\|_{k,2,\alpha,r}$ small enough. Hence the zero set of $\mathcal{F}_{(r)}$ is locally the form
$(\kappa_{r},\zeta_{r}) + Q_{( \kappa_{o},b_{(r)})}\circ f_{(r)}(\kappa_{r},\zeta_{r})$,\;i.e
\begin{equation}\label{j_holomorphic_curve}
\mathcal{F}_{(r)}\left((\kappa_{r},\zeta_{r}) + Q_{( \kappa_{o},b_{(r)})}\circ f_{(r)}(\kappa_{r},\zeta_{r})\right)=0\end{equation}
where $(\kappa_{r},\zeta_{r}) \in \ker\;D{\mathcal S}_{( \kappa_{o},b_{(r)})}$ with $\|(\kappa_{r},\zeta_{r})\| $ small.
\v
Since $I_r$ is an isomorphism for $r$ large enough we
have a gluing map
$$I_{r}+Q_{( \kappa_{o},b_{(r)})}\circ f_{(r)} \circ I_{r}$$
from $O$ into the  space of  perturbed $(j_o,J)$-holomorphic maps, where $O$ is a neighborhood of $0$ in $\ker D \mathcal{S}_{(\kappa_o,b_o)}$.

\section{\bf Estimates of derivatives with respect to gluing parameters $r$}\label{diff gluing parameters}

In this section we prove Theorem \ref{lem_est_rI}. We fix an arbitrary $a_o=(a_{o1},a_{o2})\in \mathbf{A}_1\times \mathbf{A}_2$.\v
{\bf Assumption ($\ast$).} Let $\Sigma$ be a marked nodal Riemann surface with one nodal point $q$. Let $(\kappa_o, b_o)=(\kappa_o, a_o, u)\in K\times O(\rho)$ be a perturbed $(j_o,J)$-holomorphic map from $\Sigma$ to $M$, where $u=(u_{1},u_{2}):\Sigma_1\cup \Sigma_2\to M$ with $u_{1}(q)=u_{2}(q)$ and
$$D{\mathcal S}_{(\kappa_o,b_o)}|_{K\times  \wka} :K\times  \wka \rightarrow \lka $$ is surjective.
Denote by $Q_{(\kappa_{o},b_o)}:\lka \rightarrow K \times \wka$ a right inverse of $D{\mathcal S}_{(\kappa_o,b_o)} .$ Let $ Q_{(\kappa_{o},b_{(r)})}$ be the right inverse of D${\mathcal S}_{(\kappa_o,b_{(r)})}$ defined in \eqref{express_right_inverse}.

\v
To simplify notations we denote
$$D:=D{\mathcal S}_{( \kappa_{o},b_{(r)})},\;\;Q:=Q_{(\kappa_{o},b_{(r)})},$$$$Q':=  Q'_{(\kappa_{o},b_{(r)})},\;\;
(Q^{\prime})^{*}:=(Q^{\prime})^{*}_{(\kappa_{o},b_{(r)})},
\;\;Q^{*}:=Q^{*}_{(\kappa_{o},b_{(r)})}.$$

\v
The main result of this section is to prove Theorem \ref{lem_est_rI}. We first prove some lemmas.

\subsection{Estimates for $\frac{\p}{\p r} ((Q')^{*}P_r)$}\label{notations}
We define an operator $X:\lka\to \lkar$ as
\begin{equation}\label{ass_X}
X(\eta_{1},\eta_{2})=DQ'P_{r}(\eta_{1},\eta_{2})-P_{r}(\eta_{1},\eta_{2}).
\end{equation}
By Lemma \ref{DQ'}, we have
\begin{align}\label{def.X}
X(\eta_1,\eta_2)=&\sum (\bar{\partial}\beta_{i;r}) h_{i}+\sum  \beta_{i;r}(F^{1}_{u_{(r)}}-F^{1}_{u_{i}})h_{i}
+ \sum \beta_{i;r}(F^{2}_{u_{(r)}}-F^{2}_{u_{i}})\p_{t}h_{i} \\ \nonumber
&+\left(\sum \beta_{i;r}\beta_{i;2}-1\right)\sum \beta_{i;2}\eta_{i},\end{align}
where $(\kappa,h)=Q_{(\kappa_{o},b_o)}H_{r}P_{r}(\eta_{1},\eta_{2}).$  Then  $\supp X(\eta_1,\eta_2) \subset \{\frac{r}{2}\leq |s_{i}|\leq \frac{3r}{2}\}$.

\v
\begin{lemma}\label{lem_def_HP} For any $(\eta_1,\eta_2)\in
\lka$ the following estimates hold:
\begin{itemize}
 \item[(a)] $\| (H_{r}P_r)(\eta_{1},\eta_{2} )\|_{k-1,2,\alpha} \leq  \mathsf C\sum_{i=1}^{2} \left\|\eta_{i}|_{|s_{i}|\leq  r+1}\right\|_{\Sigma_{i},k-1,2,\alpha},$ \\
     $\| (H_{r}P_r)(\eta_{1},\eta_{2} )|_{|s_{i}|\geq R}\|_{k-1,2,\alpha} \leq \mathsf C\sum_{i=1}^{2} \left\|\eta_{i}|_{R\leq |s_{i}|\leq  r+1}\right\|_{\Sigma_{i},k-1,2,\alpha},$\;\;\;\;\ for any $R\geq R_{0},$
 \item[(b)] $\|\frac{\p}{\p r}(H_{r}P_r)(\eta_{1},\eta_{2} )\|_{k-2,2,\alpha} \leq  \mathsf C\sum_{i=1}^{2} \left\|\eta_{i}|_{r-1 \leq |s_{i}|\leq  r+1}\right\|_{\Sigma_{i},k-1,2,\alpha},$
  \item[(c)]    $\|  (Q')^{*}P_r  (\eta_{1},\eta_{2} )\|_{k,2,\alpha} \leq   \mathsf C\sum\limits_{i=1}^{2} \left\|H_{r}P_{r}(\eta_{1},\eta_{2})|_{|s_{i}|\leq  r+1}\right\|_{\Sigma_{i},k-1,2,\alpha} ,$
 \item[(d)] $\|\frac{\p}{\p r} ((Q')^{*}P_r)  (\eta_{1},\eta_{2} )\|_{k-1,2,\alpha} \leq \mathsf C\left(e^{-(\fc -\alpha)\frac{r}{4}}\sum\|\eta_{i}|_{|s_{i}|\leq r+1}\|_{k-1,2,\alpha}+ \sum_{i=1}^{2} \left\|\eta_{i}|_{ \frac{r}{4} \leq |s_{i}|\leq  r+1}\right\|_{k-1,2,\alpha}\right)$,
 \item[(e)] $ \|(Q')^{*}P_{r}(\eta_{1},\eta_{2})|_{\frac{r}{2}\leq |s_{i}|\leq \frac{3r}{2}}\|_{k,2,\alpha}
\leq  C\left[e^{-(\fc-\alpha)\frac{r}{4} } \|H_{r}P_{r}(\eta_{1},\eta_{2}) \|_{k-1,2,\alpha}+ \left\|H_{r}P_{r}(\eta_{1},\eta_{2})\mid_{ | s_{i}| \geq  \frac{r}{4} }  \right\|_{k-1,2,\alpha}\right].
$
 \end{itemize}
\end{lemma}
\n{\bf Proof.}
By definition
\begin{equation}\label{eqn_est_HP_0}
(H_{r}P_r)(\eta_{1},\eta_{2} )=(\tilde \eta_{1},\tilde \eta_{2}) ,\end{equation} where
$$\tilde \eta_{1}=\beta_{1;2}\left[\beta_{1;2}\eta_1(s_1,t_1)+\beta_{2;2}\eta_{2}(s_1-2r, t_1-\tau)\right],\;\;\;\tilde \eta_{2}=\beta_{2;2}\left[\beta_{1;2}\eta_1(s_2+2r,t_2+\tau)+\beta_{2;2}\eta_{2}(s_2, t_2)\right].
$$
Note that
\begin{equation}\label{eqn_eta_1}
\left\|\eta_{2}(s_1-2r,t_{1}-\tau)|_{r-1\leq s_{1}\leq r+1}\right\|_{\Sigma_1,k-1,2,\alpha} \leq  e^{2\alpha}\left\|\eta_{2}(s_2,t_{2})|_{r-1\leq s_{1}\leq r+1}\right\|_{\Sigma_{2},k-1,2,\alpha}.
\end{equation}
Then (a) follows from \eqref{eqn_eta_1} and the definition of $\beta_{i;2}$.
\v
By \eqref{beta} we have
\begin{align*}\label{eqn_eta_2}
\frac{\p \tilde \eta_{1}}{\p r}= &\tfrac{\p\beta_{1;2}^2}{\p r} \eta_1(s_1,t_{1})+ \tfrac{\p\left(\beta_{1;2}\sqrt{1- \beta_{1;2}^2(s_1)}\right) }{\p r}\eta_{2}(s_1-2r,t_1-\tau)
 -2\beta_{1;2}\sqrt{1- \beta_{1;2}^2(s_1)}\tfrac{\p( \eta_{2})}{\p s_{2}}(s_1-2r,t_1-\tau).
\end{align*}
Then $$\left\|\frac{\p \tilde \eta_{1}}{\p r}\right\|_{k-2,2,\alpha}\leq C\sum_{i=1}^{2} \left\|\eta_{i}|_{r-1 \leq |s_{i}|\leq  r+1}\right\|_{\Sigma_{i},k-1,2,\alpha}.$$
A same estimate for $\frac{\p\tilde \eta_{2}}{\p r} $ also holds.
Then  (b) follows from $\frac{\p}{\p r}(H_{r}P_r)(\eta_{1},\eta_{2} )=(\frac{\p}{\p r}\tilde \eta_{1},\frac{\p}{\p r}\tilde \eta_{2})$.
\v Denote $(\kappa,h_{1},h_{2})=Q_{(\kappa_{o},b_o)}H_{r}P_{r}(\eta_1,\eta_2).$ Recall that (cf. \eqref{eqn_def_Q*})
\begin{equation}\label{eqn_(Q')^*}
(Q^{\prime})^{*} P_r (\eta_{1},\eta_{2} )=(\kappa,\beta_{1;r} h_1,\beta_{2;r} h_2).\end{equation}
Then (c) follows from (a), $|\beta_{i;r}|\leq 1$ and $\|Q_{(\kappa_{o},b_o)}\|\leq C$.

Taking the derivative $\frac{\p}{\p r}$ of \eqref{eqn_(Q')^*} we obtain
\begin{equation} \label{eqn_Q'*_1}
\tfrac{\p}{\p r}((Q^{\prime})^{*} P_r) (\eta_{1},\eta_{2} )=(\tfrac{\p \kappa}{\p r},\beta_{1;r}\tfrac{\p h_{1}}{\p r},\beta_{2;r}\tfrac{\p h_{2}}{\p r})+(0,h_{1}\tfrac{\p \beta_{1;r}}{\p r}, h_{2}\tfrac{\p\beta_{2;r}}{\p r}).
\end{equation}
On the other hand, since $\frac{\p}{\p r}(\kappa,h_1,h_2)= Q_{(\kappa_{o},b_{o})}\left(\frac{\p}{\p r}(H_{r}P_{r})(\eta_1,\eta_{2})\right),$ we have
\begin{align}\label{eqn_est_II_1}
 \left\|\frac{\p}{\p r}(\kappa,h_1,h_2)\right\|_{k-1,2,\alpha}
   \leq C \left\|(\eta_1,\eta_{2})|_{r-1 \leq |s_{i}|\leq r+1}\right\|_{k-1,2,\alpha},
\end{align}
where we have used the bound of right inverse and (b) in the inequality. Since $h_{i}\tfrac{\p \beta_{i;r}}{\p r}\subset \{ \frac{r}{2}\leq |s_{i}|\leq \frac{3r}{2}\},$
we have
\begin{equation}\label{eqn_adH}
\|(0,h_{1}\tfrac{\p \beta_{1;r}}{\p r}, h_{2}\tfrac{\p\beta_{2;r}}{\p r})\|_{k-1,2,\alpha} \leq C\|(h_{1}, h_{2})|_{\frac{r}{2}\leq |s_{i}|\leq \frac{3r}{2}}\|_{k-1,2,\alpha}.
\end{equation}
Since $d\mathfrak{i}_{(\kappa_{o},b_{o})}|_{R_{0}\leq |s_{i}|\leq 2r-R_{0}}=0 $  we have
\begin{equation}\label{eqn_a4.20}
D_{u}(h_{1},h_{2})=H_{r}P_{r}(\eta_{1},\eta_{2}),\;\mbox{ for }\;R_{0}\leq |s_{i}|\leq 2r-R_{0} .
\end{equation}
By  $(\kappa,h_{1},h_{2})=Q_{(\kappa_{o},b_o)}H_{r}P_{r}(\eta_1,\eta_2)$
and $ \kappa |_{R_{0}\leq |s_{i}|\leq 2r-R_{0}}=0$,
applying \eqref{f_tube_estimate_curve} of Corollary \ref{tube_ker_c-1} with $R'=\frac{r}{2}$, $R=\frac{r}{4}$ and $\eta=H_{r}P_{r}(\eta_{1},\eta_{2})$,  we conclude that
 \begin{align} \label{eqn_HX}
   \|(h_{1}, h_{2})|_{\frac{r}{2}\leq |s_{i}|\leq \frac{3r}{2}}\|_{k-1,2,\alpha}\leq &C\left(e^{-(\fc -\alpha)\frac{r}{4}}\|H_{r}P_{r}(\eta_{1},\eta_{2})\|_{k-1,2,\alpha}+\|H_{r}P_{r}(\eta_{1},\eta_{2})|_{|s_{i}|\geq \frac{r}{4}}\|_{k-1,2,\alpha}\right) \\
 \nonumber \leq& C\left(e^{-(\fc -\alpha)\frac{r}{4}}\sum\|\eta_{i}|_{|s_{i}|\leq r+1}\|_{k-1,2,\alpha}+\sum\|\eta_{i}|_{\frac{r}{4}\leq |s_{i}|\leq r+1}\|_{k-1,2,\alpha}\right),
 \end{align}
where we have used (a) in the last inequality. By \eqref{eqn_Q'*_1}, \eqref{eqn_est_II_1}, \eqref{eqn_adH} and  \eqref{eqn_HX} we get (d).
(e) follows from \eqref{eqn_HX} and $\| (\beta_{1;r}h_{1},\beta_{2;r}h_{2})|_{\frac{r}{2}\leq |s_{i}|\leq \frac{3r}{2}}\|_{k,2,\alpha}\leq \| (h_{1},h_{2})|_{\frac{r}{2}\leq |s_{i}|\leq \frac{3r}{2}}\|_{k,2,\alpha}$.    $\;\;\;\Box$

\v
\begin{lemma} \label{lem_est_HDS} There exists a constant $\mathsf C>0,$
  such that for any $(\kappa,  h +\hat h_{0})\in K\times \cwk$ with $\supp\; h_{i}\subset \{|s_{i}|\leq \frac{3r}{2}\}$, we have
$$
\|H_{r}D(Id,\phi_{r})(\kappa, (h_{1} +\hat h_{0},h_{2}+\hat h_{0}) )\|_{k-1,2,\alpha} \leq \mathsf C  \|(\kappa, h_{1}+\hat h_{0}  ,h_{2}+\hat h_{0} )\| _{k,2,\alpha}.
$$
In particular, if $(\kappa, h +\hat h_{0} ) $ satisfies  $D(\kappa,h+\hat h_{0})|_{|s_{i}|\leq \frac{r}{2}}=0$, we have
$$
\|H_{r}D(Id,\phi_{r})(\kappa, h_{1}\beta_{1;r}+\hat{h}_{0}, h_{2}\beta_{2;r}+\hat{h}_{0}  )\|_{k-1,2,\alpha} \leq \mathsf C  (\| (h_{1}   ,h_{2})|_{\frac{r}{2}\leq |s_{i}|\leq \frac{3r}{2}}
\| _{k,2,\alpha}+e^{(\fc-\alpha)\frac{r}{2}}|h_{0}|).
$$ \end{lemma}
\begin{proof}
By the same calculation as  in \eqref{approximate_difference} we have, in $\{|s_{i}|\leq r+1\}$,
\begin{align*}
&H_{r}D(Id,\phi_{r})(\kappa, (  h_{1}+\hat h_{0}, h_{2}+\hat h_{0})) =H_{r}D (\kappa, \sum h_{i}+\hat h_{0}):=(\tilde \eta_{1},\tilde \eta_{2})
\end{align*}
where
$$\tilde \eta_{i} =\beta_{i;2}\left(\sum_{i=1}^{2} D_{u_{i}}  h_{i} +d\mathfrak{i}_{(\kappa_o, b_{o})}(\kappa,h +\hat h_{0} )    + \sum_{i=1}^{2} (F^{1}_{u_{(r)}}-F^{1}_{u_{i}}) h_{i}+F^{1}_{u_{(r)}}\hat h_{0}  +  \sum_{i=1}^{2}  (F^{2}_{u_{(r)}}-F^{2}_{u_{i}})\p_{t}h_{i} \right) . $$
Then the first inequality follows. Note that $(\kappa,h_{(r)})=(Id,\phi_{r})(\kappa, h_{1}\beta_{1;r}+\hat{h}_{0}, h_{2}\beta_{2;r}+\hat{h}_{0}  ).$ Applying \eqref{eqn_a_DK}, the exponential decay of $F^{k}_{u_{(r)}},F^{k}_{u},k=1,2,$
and $$\beta_{i;r}|_{|s_{i}|\leq \frac{r}{2}}=1,\;\;\;\beta_{i;r}|_{|s_{2-i}|\leq \frac{r}{2}}=0,\;\;\;D(h+\hat h_{0})|_{|s_{i}|\leq \frac{r}{2}}=0,\;\;F^{l}_{u_{(r)}}-F^{l}_{u_{i}}|_{|s_{i}|\leq \frac{r}{2}}=0,\;\;\;i,l=1,2,$$  we can prove the second inequality.
\end{proof}

\v
\begin{lemma}\label{lem_HX} There exists a constant $\mathsf C>0,$
  such that  for any $(\eta_1,\eta_2)\in
\lka$ the following estimates hold:
\begin{itemize}
\item[(1)] $\|H_{r}X(\eta_1,\eta_2)\|_{k-1,2,\alpha}  \leq \mathsf C  \left[ e^{-(\fc-\alpha)\frac{r}{4}} \|  (\eta_1,\eta_{2})|_{ |s_{i}|\leq  r+1}\|_{k-1,2,\alpha} + \left\|(\eta_1,\eta_{2})|_{\frac{r}{4}\leq |s_{i}|\leq  r+1}\right\|_{k-1,2,\alpha} \right],$
\item[(2)] $\| \frac{\p}{\p r} \left(H_{r} X\right)(\eta_1,\eta_2)\|_{k-2,2,\alpha} \leq
   \mathsf C \left[e^{-(\fc-\alpha)\frac{r}{4}} \|  (\eta_1,\eta_{2})|_{ |s_{i}|\leq  r+1}\|_{k-1,2,\alpha}   +\left\|(\eta_1,\eta_{2})|_{\frac{r}{4} \leq |s_{i}|\leq  r+1}\right\|_{k-1,2,\alpha}  \right]. $
   \end{itemize}
\end{lemma}
\v\n{\bf Proof.} Denote $(\kappa,h_{1},h_{2})=Q_{(\kappa_{o},b_o)}H_{r}P_{r}(\eta_1,\eta_2).$  Note that  $\supp\; H_{r}X\subset \{\frac{r}{2}\leq |s_{i}|\leq r+1 \}.$ From the definition of $X$ (see \eqref{def.X}) we have
$$\|H_{r}X(\eta_1,\eta_2)\|_{k-1,2,\alpha}\leq C \|(h_{1},h_{2})|_{\frac{r}{2}\leq |s_{i}|\leq r+1}\|_{k,2,\alpha}+C\|(\eta_{1},\eta_{2})|_{\frac{r}{2}\leq |s_{i}|\leq r+1}\|_{k,2,\alpha}.$$
Then (1) follows from \eqref{eqn_HX} and (a) in Lemma \ref{lem_def_HP}.
Taking derivative $\frac{\p}{\p r}$ to $H_{r}X(\eta_{1},\eta_{2})$ we get
$$
\frac{\p (H_{r}X)}{\p r}(\eta_{1},\eta_{2})=(\sum\xi^{i}_{1} h_{i}+\sum\zeta^{i}_{1}\eta_{i},\sum\xi^{i}_{2} h_{i}+\sum\zeta^{i}_{2}\eta_{i})+(\sum\rho^{i}_{1} \frac{\p h_{i}}{\p r},\rho_{2}^{i} \frac{\p h_{i}}{\p r})$$
$$+(\sum\chi^{i}_{1} \frac{\p h_{i}}{\p t},\chi_{2}^{i} \frac{\p h_{i}}{\p t})+(\sum\lambda^{i}_{1} \frac{\p^2 h_{i}}{\p r\p t},\lambda_{2}^{i} \frac{\p^2 h_{i}}{\p r\p t}),
$$
where
$$
\xi_{l}^{i}=\frac{\p}{\p r}\left(\beta_{l;2}(\bar{\partial}\beta_{i;r} +\beta_{i;r}(F^{1}_{u_{(r)}}-F^{1}_{u_{i}}))\right),\;\;
\rho_{l}^i=\beta_{l;2}(\bar{\partial}\beta_{i;r} +\beta_{i;r}(F^{1}_{u_{(r)}}-F^{1}_{u_{i}}))
$$
$$
\zeta_{l}^{i}=\frac{\p}{\p r}\left(\beta_{l;2}\beta_{i;2}(\sum_{e=1,2} \beta_{e;r}\beta_{e;2} -1)\right),\;\;
\chi^{i}_{l}=\frac{\p}{\p r}\left(\beta_{l;2}\beta_{i;r}(F^{2}_{u_{(r)}}-F^{2}_{u_{i}})\right),\;\;\lambda_{l}^{i}=\beta_{l;2}\beta_{i;r}(F^{2}_{u_{(r)}}-F^{2}_{u_{i}}).
$$
 By the same proof of (d) in Lemma \ref{lem_def_HP}, we have
\begin{align}\label{eqn_est_II_2}
&  \left\|\frac{\p (H_rX)}{\p r}(\eta_1,\eta_{2})\right\|_{k-2,2,\alpha}\\
\nonumber \leq & C\left(\left\|\frac{\p}{\p r}(h_1,h_2)|_{\frac{r}{2} \leq |s_{i}|\leq  r+1}\right\|_{k-1,2,\alpha}+\left\|(h_1,h_2)|_{\frac{r}{2} \leq |s_{i}|\leq  r+1}\right\|_{k,2,\alpha}+\|(\eta_{1},\eta_{2})|_{\frac{r}{2}\leq |s_{i}|\leq r+1}\|_{k,2,\alpha}\right) \\
\nonumber \leq &C\left[\left\|(\eta_1,\eta_{2})|_{\frac{r}{4} \leq |s_{i}|\leq r+1}\right\|_{k-1,2,\alpha}+e^{-(\fc-\alpha)\frac{r}{4}} \|  (\eta_1,\eta_{2})|_{  |s_{i}|\leq r+1}\|_{k-1,2,\alpha}  \right],
\end{align}
where we have used \eqref{eqn_est_II_1} and \eqref{eqn_HX} in the last inequality. Then (2) is proved.  $\Box$

\v

\subsection{Estimates of $\left\|\frac{\p}{\p r}\left(H_{r}\circ (D{\mathcal S}_{( \kappa_{o},b_{(r)})}\circ Q_{( \kappa_{o},b_{(r)})}^ {\prime})^{-1} P_{r}\right)\right\|_{k-2,2,\alpha}$}

\begin{lemma} \label{est_DQ} There exists a constant $\mathsf C>0,$
  such that for any $(\eta_1,\eta_{2})\in \lka$	
the following estimates hold:
\begin{itemize}
\item[(A)]	$H_{r}  (DQ')^{-1} P_{r}(\eta_1,\eta_{2})|_{|s_i|\leq \frac{r}{2}}=(\eta_1,\eta_{2})|_{|s_i|\leq \frac{r}{2}} $,
\item[(B)]$\left\|H_{r}  (DQ') ^{-1} P_{r}(\eta_1,\eta_{2})|_{\frac{r}{2}  \leq |s_{1}|\leq \frac{3r}{2}}\right\|_{k-1,2,\alpha}  $
     $$\leq \mathsf C\left[ e^{-(\fc-\alpha)\frac{r}{4}} \|  (\eta_1,\eta_{2})|_{ |s_{i}|\leq  r+1}\|_{k-1,2,\alpha} + \left\|(\eta_1,\eta_{2})|_{\frac{r}{4}\leq |s_{i}|\leq  r+1}\right\|_{k-1,2,\alpha} \right],$$
\item[(C)]
$\left\|\frac{\p}{\p r}\left(H_{r}  (DQ') ^{-1} P_{r}\right)(\eta_1,\eta_{2})\right\|_{k-2,2,\alpha}$ $$
 \leq \mathsf C \left[ e^{-(\fc-\alpha)\frac{r}{4}} \|  (\eta_1,\eta_{2})|_{ |s_{i}|\leq  r+1}\|_{k-1,2,\alpha} + \left\|(\eta_1,\eta_{2})|_{\frac{r}{4}\leq |s_{i}|\leq  r+1}\right\|_{k-1,2,\alpha} \right].$$
	 \end{itemize}
\end{lemma}
\n
{\bf Proof.} For any $(\eta_{1},\eta_{2})\in \lka ,$
let $\eta_{r}:=P_{r}(\eta_{1},\eta_{2})=\sum_{i=1}^{2} \beta_{i;2}\eta_{i}.$
Denote
$$Q_{(\kappa_{o},b_o)}\left(H_{r}   \eta_{r} \right)=(\kappa,h_1,h_2).$$ Then $ Q'\eta_{r}=(\kappa,\sum_{i=1}^{2} \beta_{i;r}h_{i}).$ Let   $\tilde \eta_{r}= (DQ')^{-1}\eta_{r}.$ By the definition of $X$ and the invertibility of $DQ'$, we have
\begin{align}\label{app_diff_0}
 \eta_{r}-\tilde \eta_{r}=(DQ')\tilde \eta_{r}-\tilde \eta_{r}= X(H_{r}\tilde \eta),\;\;\;\;\eta_{r}-\tilde \eta_{r} =(DQ')^{-1}(DQ'-I)\eta_{r}=(DQ')^{-1}  X(\eta_1,\eta_2).
\end{align}
It follows   that $\tilde \eta_{i}|_{|s_{i}|\leq \frac{r}{2}}=\eta_{i}|_{|s_{i}|\leq \frac{r}{2}}.$ Then (A) follows.

\v  By \eqref{app_diff_0} and $\|H_{r}\eta\|_{k-1,2,\alpha}  =\|\eta\|_{k-1,2,\alpha,r}$, we have
\begin{align*}
&\left\|H_{r}\tilde\eta_{r}|_{\frac{r}{2}  \leq |s_{1}|\leq \frac{3r}{2}}\right\|_{k-1,2,\alpha}
\leq  \|H_{r} \eta|_{|s_{i}|\geq \frac{r}{2}}\|_{k-1,2,\alpha}+\| (DQ')^{-1}X(\eta_1,\eta_2)\|_{k-1,2,\alpha,r} \\
\leq &C \left[\|\eta_{i}|_{\frac{r}{2}\leq  |s_{i}|\leq r+1}\|_{k-1,2,\alpha} +\| X(\eta_1,\eta_{2}) \|_{k,2,\alpha,r}\right]  \\=&C \left[\|\eta_{i}|_{ \frac{r}{2}\leq |s_{i}|\leq r+1}\|_{k-1,2,\alpha} +\| H_{r}X(\eta_1,\eta_{2}) \|_{k,2,\alpha}\right]\\
 \leq &C \left[ e^{-(\fc-\alpha)\frac{r}{4}} \|  (\eta_1,\eta_{2})|_{ |s_{i}|\leq  r+1}\|_{k-1,2,\alpha} + \left\|(\eta_1,\eta_{2})|_{\frac{r}{4}\leq |s_{i}|\leq  r+1}\right\|_{k-1,2,\alpha} \right],
\end{align*}
where we have used  (a) of Lemma \ref{lem_def_HP}   and the bound  of $\|(DQ')^{-1}\|$ in the second inequality. We used   (1) of Lemma \ref{lem_HX}  in the last inequality.
Then (B) follows.

\v
We prove (C).  Multiplying $H_{r}$ on both sides of \eqref{app_diff_0}  and taking the derivative $\frac{\p}{\p r}$ we have
\begin{align}\label{eqn_r_H_2}
&\frac{\p}{\p r}(H_{r}(\tilde\eta_{r}))=\frac{\p}{\p r}(H_{r}(\eta_{r}))-\frac{\p}{\p r} \left[H_{r}(D  Q')^{-1} P_{r} \circ H_{r} X(\eta_1,\eta_2)\right].
\end{align} On the other hand, by
$H_{r}(DQ') P_{r} \circ H_{r}(DQ')^{-1}  X(\eta_1,\eta_2)=H_{r}X(\eta_1,\eta_2),$  we get
\begin{align}\label{eqn_r_H_0}
&H_{r}  P_{r}\circ \frac{\p}{\p r} \left[H_{r}  (D Q')^{-1} X(\eta_1,\eta_2) \right] \\\nonumber
=&H_{r}  (D Q')^{-1}  P_r\circ\frac{\p}{\p r}\left[H_r  X(\eta_1,\eta_2)\right]  -H_{r}  (D Q')^{-1}  P_r \frac{\p}{\p r}\left[H_{r}(D  Q') P_{r}\right]\circ H_{r}  (D Q')^{-1}  X(\eta_1,\eta_2).
\end{align}
Note that
\begin{align*}
&\frac{\p}{\p r} \left[H_{r}  (D Q')^{-1} X(\eta_1,\eta_2) \right] =H_{r}  P_{r}\circ \frac{\p}{\p r} \left[H_{r}  (D Q')^{-1} X(\eta_1,\eta_2) \right]+\frac{\p}{\p r}(H_{r}P_{r})H_{r}  (D Q')^{-1} X(\eta_1,\eta_2).
\end{align*}
Then inserting \eqref{eqn_r_H_0} into \eqref{eqn_r_H_2} we get
\begin{align*}
&\frac{\p}{\p r}(H_{r}\tilde\eta) =\frac{\p}{\p r}(H_{r}P_r)(\eta_{1},\eta_{2}) + (I)+ (II) + (III),
\end{align*}
where
\begin{align*}
 &(I)=-\frac{\p( H_{r}P_{r} )}{\p r} \circ H_{r} (D  Q')^{-1} P_{r}  \circ H_{r} X(\eta_1,\eta_2),\;\;\;\;\;\;\;\\&(II)= -H_{r}(D  Q')^{-1} P_{r} \circ \frac{\p (H_rX)}{\p r}(\eta_1,\eta_2)\\
  &(III)=H_{r}(D  Q')^{-1} P_{r}\circ \frac{\p}{\p r}\left(H_{r}(D  Q') P_{r} \right)\circ H_{r}(D  Q')^{-1} P_{r}  \circ H_{r} X(\eta_1,\eta_2).
\end{align*}
By (b) of Lemma \ref{lem_def_HP} we have,
\begin{align}\label{eqn_est_HP}
\left\|\frac{\p}{\p r}(H_{r}P_r)(\eta_{1},\eta_{2} )\right \|_{k-2,2,\alpha} \leq C \|(\eta_{1},\eta_{2})|_{r-1 \leq |s_{i}|\leq r+1}\|_{k-1,2,\alpha}.
\end{align}
\v
Next we calculate (I), (II) and (III).
\v\n
{\bf Calculation for (I).}
Using \eqref{eqn_est_HP} with $(\eta_{1},\eta_{2} )$ replaced by $\left(H_{r}(D  Q')^{-1} P_{r}  \circ H_{r} X(\eta_{1},\eta_{2})\right)$, we obtain that
\begin{align} \label{eqn_est_I0}
  \| H_{r} (D  Q')^{-1} P_{r}  \circ H_{r} X(\eta_{1},\eta_{2})\|_{k-1,2,\alpha}
\leq  C  \|   H_{r} X(\eta_{1},\eta_{2})\|_{k-1,2,\alpha}
\end{align}
where we have used (A), (B) with $(\eta_1,\eta_{2})$ replaced by  $(H_{r}X(\eta_1,\eta_2))$ and $\supp\; H_{r}X(\eta_1,\eta_2)\subset \{\frac{r}{2} \leq |s_{i}|\leq \frac{3r}{2}\}.$ Then
\begin{align} \label{eqn_est_I}
\| \mbox{I} \|_{k-2,2,\alpha}   \leq C  \|   H_{r} X(\eta_{1},\eta_{2})\|_{k-1,2,\alpha}
\end{align}
\v\n
{\bf Calculation for (II).} Applying (A) and (B) with $(\eta_1,\eta_{2})$ replaced by $\frac{\p (H_rX)}{\p r}(\eta_1,\eta_2) $, we have
\begin{equation}\label{eqn_est_II_2}
\|\mbox{II}\|_{k-2,2,\alpha}\leq C \left\|\frac{\p (H_rX)}{\p r} (\eta_{1},\eta_{2})\right\|_{k-2,2,\alpha}.
\end{equation}
\v\n
{\bf Calculation for (III).} Set $\xi=H_{r}(D  Q')^{-1} P_{r}  \circ H_{r} X (\eta_{1},\eta_{2}).$ Multiplying $H_{r}$ on both sides of \eqref{ass_X} and taking the derivative $\frac{\p}{\p r}$ we have
\begin{align}\label{eqn_HX_r1}
\frac{\p (H_rX)}{\p r} \xi+\frac{\p(H_{r}P_{r})}{\p r} \xi=\frac{\p}{\p r}(H_{r}(D  Q') P_{r})\xi,
\end{align}
where $(\eta_1,\eta_{2})$ is replaced by $\xi.$
Since $\supp \;\frac{\p}{\p r}(H_{r}(D  Q') P_{r})(\eta_{1},\eta_{2})\subset\{\frac{r}{2} \leq |s_{i}|\leq r+1\}$, using \eqref{eqn_HX_r1}, (b) of Lemma \ref{lem_def_HP} and (2) of Lemma \ref{lem_HX} we get
\begin{align}\label{eqn_H_r}
\left\|\frac{\p}{\p r}(H_{r}(D  Q') P_{r})\xi\right\|_{k-2,2,\alpha}
\leq  C  \left[\left\|\xi|_{\frac{r}{4} \leq |s_{i}|\leq  r+1}\right\|_{k-1,2,\alpha}+e^{-(\fc-\alpha)\frac{r}{4}} \|  \xi|_{  |s_{i}|\leq  r+1}\|_{k-1,2,\alpha}  \right].
\end{align}
 Applying (A) and (B) with $(\eta_1,\eta_{2})$ replaced by $ \frac{\p}{\p r}\left(H_{r}(D  Q')P_{r} \right)\xi ,$ we have
\begin{align}\label{eqn_III_est}
&\|\mbox{III}\|_{k-2,2,\alpha}\leq C \left \|\frac{\p}{\p r}\left(H_{r}(D  Q') P_{r} \right)\xi \right\|_{k-2,2,\alpha} \\
\nonumber
&\leq C \left[\| H_{r}X(\eta_{1},\eta_{2}) \|_{k-1,2,\alpha}+e^{-(\fc-\alpha)\frac{r}{4}} \|   H_{r}X(\eta_{1},\eta_{2})  \|_{k-1,2,\alpha}\right] \nonumber
\end{align}
where we have used \eqref{eqn_H_r} and \eqref{eqn_est_I0} in the last inequality.

Combining the estimates
\eqref{eqn_est_HP}, \eqref{eqn_est_I}, \eqref{eqn_est_II_2} and \eqref{eqn_III_est}  , we obtain
\begin{align*}
&\|\frac{\p}{\p r}\left(H_{r}(D  Q')^{-1} P_{r}\right)(\eta_1,\eta_{2})\|_{k-2,2,\alpha} \\
\leq&   C  \left[\|(\eta_{1},\eta_{2})|_{\frac{r}{2} \leq |s_{i}|\leq  r+1}\|_{k-1,2,\alpha} +\left\|\frac{\p (H_rX)}{\p r} (\eta_{1},\eta_{2})\right\|_{k-2,2,\alpha}\right]  +C  \| H_{r}X(\eta_{1},\eta_{2})  \|_{k-1,2,\alpha}  .
\end{align*}
where we have used $\supp \;H_{r}X(\eta_1,\eta_2)\subset \frac{r}{2} \leq |s_{i}|\leq  r+1$.
The lemma follows from (1) and (2) of Lemma \ref{lem_HX} $\Box$

\v

\subsection{Estimates of $\frac{\partial I_{r}^*}{\partial r}$}

\begin{lemma}\label{lem_est_rI-1} There exists a constant $\mathsf C>0,$ independent of $r,$ such that  for any $(\kappa,h+\hat h_{0})\in \ker D\mathcal {S}_{( \kappa_{o}, b_{o})}$.
 \begin{equation}
 \label{partial_I_estimate} \left\|\tfrac{\partial}{\partial r} I^*_{r}(\kappa,h+\hat h_{0})
 \right\|_{k-1,2,\alpha}
\leq \mathsf C \|h_{i}|_{\frac{r}{2}\leq |s_i|\leq \frac{3r}{2}}\|_{k  ,2,\alpha}+\mathsf C e^{(\fc-\alpha)\frac{r}{2}} |\hat h_0|.
 \end{equation}
\end{lemma}
\begin{proof}
 Recall that $I^{*}_{r}(\kappa,h+h_{0})=(\kappa,h_{r}^*)-Q^{*} D (\kappa,h_{(r)})
.$ By  $P_{r}H_{r}=Id$ we have
\begin{align*}\tfrac{\partial }{\partial r}I^*_{r}( \kappa,h,h_{0})=&\left(0,\tfrac{\p \beta_{1;r}}{\p r}h_1,\tfrac{\p \beta_{2;r}}{\p r}h_2\right)+\tfrac{\partial }{\partial r}\left(Q^*P_{r}\right)\circ H_{r} D(\kappa,h_{(r)}) \\
&+Q^*P_{r}\circ\tfrac{\partial }{\partial r}\left(H_{r}D(\kappa,h_{(r)}) \right).
\end{align*}
Note that $d\mathfrak{i}_{( \kappa_{o},b_{(r)})}(\kappa,h_{(r)})=d\mathfrak{i}_{( \kappa_{o}, b_{o})}(\kappa,h_{i}+\hat h_0).$
By   \eqref{eqn_a_DK}, equalities $F^{k}_{u_{(r)}}|_{|s_{i}|\leq \frac{r}{2}}=F^{k}_{u_{i}}|_{|s_{i}|\leq \frac{r}{2}},i,k=1,2$ and the definition of $\beta_i$ we have $\supp \; D(\kappa,h_{(r)})\subset \left\{  \frac{r}{2}\leq |s_i|\leq \frac{3r}{2}\right\}.$
Then
\begin{equation}\label{HD-1}
\|H_{r} \circ D(\kappa,h_{(r)})\|_{k-1,2,\alpha}\leq C \|h_{i}|_{\frac{r}{2}\leq |s_i|\leq \frac{3r}{2}}\|_{k,2,\alpha}+Ce^{-(\fc-\alpha)\frac{r}{2}} |  h_0|,
\end{equation}
\begin{equation}\label{HD-2}
\left\|\frac{\p}{\p r}\left(H_{r}D(\kappa,h_{(r)})\right)\right\|_{k-2,2,\alpha}\leq C \|h_{i}|_{\frac{r}{2}\leq |s_i|\leq \frac{3r}{2}}\|_{k  ,2,\alpha}+Ce^{-(\fc-\alpha)\frac{r}{2}} |  h_0|.
\end{equation}
Note that
\begin{equation}\label{Q*P-1}
Q^*P_r=(Q')^*P_r\circ H_r(DQ')^{-1}P_r\end{equation}
\begin{equation}\label{Q*P-2}
\frac{\partial}{\partial r}(Q^*P_{r})=\frac{\partial}{\partial r}((Q')^*P_{r})\circ H_r(DQ')^{-1}P_r
+ (Q')^*P_{r}\frac{\partial}{\partial r}\left(H_r(DQ')^{-1}P_r\right).\end{equation}
Then lemma follows from \eqref{HD-1}, \eqref{HD-2} and
 Lemma \ref{lem_def_HP}, Lemma \ref{est_DQ}.
\end{proof}

\subsection{Estimates of $\frac{\partial }{\partial r}\left[I^*_r(\kappa,\zeta)+Q^*_{( \kappa_{o},b_{(r)})}f_{(r)}(I_r(\kappa,\zeta))\right]$}
\label{exp est}
Let $\mathsf d>0$ be a small constant such that
$$
\|(\kappa,\zeta)\|_{k,2,\alpha}\leq \mathsf d.
$$
We set
\begin{align} \label{kernel_express}
(\kappa^*_{r},\xi^*_{(r)}):=I^*_r(\kappa,\zeta)+Q^*_{( \kappa_{o},b_{(r)})}\circ f_{(r)}(I_r(\kappa,\zeta)), \\ \label{ker}
(\kappa_{r},\xi_{(r)}):=I_r(\kappa,\zeta)+Q_{( \kappa_{o},b_{(r)})}\circ f_{(r)}(I_r(\kappa,\zeta)).
\end{align}
where $\kappa_{r}\in K ,\; \xi^*_{(r)}\in \wka.$ Denote  $v_{(r)}=\exp_{u_{(r)}}\xi_{(r)}$ and $b_r=(r,\tau,a_{o},v_{(r)})$. We have
\begin{equation}
\label{local_kernel_express}
\bar{\partial }_{j_{o},J} v_{(r)}+ \mathfrak{i}( \kappa_{o}+\kappa_{r},v_{(r)}) =0.
\end{equation}

\v

From  the Implicity Function Theorem (see Theorem \ref{details_implicit_function_theorem}), Lemma \ref{lem_est_I_r}, Lemma \ref{difference_DF_estimates_gw} and Lemma \ref{lem_est_al_hol} we have
\begin{align}
\label{local_calculation_2}
 &\|f_{(r)}(I_r(\kappa,\zeta))\|_{k-1,2,\alpha,r} =\left\|DQf_{(r)}(I_r(\kappa,\zeta))  \right\|_{k-1,2,\alpha,r} \leq C\left\|Qf_{(r)}(I_r(\kappa,\zeta))  \right\|_{k,2,\alpha,r}
  \\ \leq & C\left\|\mathcal F_{(r)}(I_r(\kappa,\zeta))  \right\|_{k-1,2,\alpha,r} \leq C\left\|\mathcal F_{(r)}(0) \right\|_{k-1,2,\alpha,r}+C\left\|d\mathcal F_{(r)}(\theta I_r(\kappa,\zeta)) (I_r(\kappa,\zeta))\right\|_{k-1,2,\alpha,r} \nonumber \\
 \leq & C\left(\|(\kappa,\zeta)\|_{k,2,\alpha}+e^{-\frac{(\fc-\alpha)r}{2}}\right). \nonumber \end{align}
where we use  the  intermediate value theorem in the third inequality and  $\theta\in(0,1).$
  It follows that
$\|(\kappa_r,\xi_{(r)})\|\leq   C\mathsf{d}$ as $r$ large enough.

For any small $(\kappa,\zeta)\in \ker D{\mathcal S}_{( \kappa_{o}, b_{o})}$, $\exp_{u_{i}}\zeta$ converges to a point as
  $|s_i|\to \infty$ ( see Lemma \ref{tube_ker_c-1}).
It follows that $F^{l}_{\exp_{u_{i}}\zeta}, l=1,2$, converges to zero exponentially.
By Theorem \ref{tube_exponential_decay} and the definition of $u_{(r)}$ we have
  \begin{align} \label{exp_J_map_3}
&\left|d u_{(r)}\right|\leq \mathsf Ce^{-\fc |s_{i}|},\;\;\;\mbox{for any}\; \frac{r}{4}\leq |s_{i}|  \leq  {r}.
\end{align}
 By choosing $\mathsf{d}$ small and $R_{0}$ large we have
 $$E (v_{(r)};|s_{i}|\geq R_{0})\leq \hbar,$$
where $\hbar$ is the constant in Theorem \ref{tube_exponential_decay}. Then we have for any $\tfrac{r}{4}\leq |s_{i}|\leq r$
\begin{align}\label{exp_J_map_1}
|d v_{(r)} |\leq \mathsf Ce^{-\fc |s_{i}|}.
\end{align}
By \eqref{exp_J_map_3}, \eqref{exp_J_map_1}, \eqref{expression_of_S} and \eqref{expression_of_F}  we conclude that in the part $ \tfrac{r}{2}\leq |s_i|\leq \tfrac{3r}{2} $
\begin{equation}
\label{local_calculation_3}
 |F^{l}_{v_{(r)}}|\leq C e^{-\mathfrak{c} \tfrac{r}{2}}, \;\;\;\;|F^{l}_{u_{(r)}}|\leq C e^{-\mathfrak{c}  \tfrac{r}{2}},\;\;\;\;l=1,2,
\end{equation}
 for some constant $C>0$, when $r>4R_{0}$.

\v
Next we   recall a fact about the exponential map on a compact Riemannian manifold $M$ (see \cite{MS}, Page 362, Remark 10.5.5). There
are two smooth families of endomorphisms
$$
E_{i}(p,\xi) : T_{p}M\rightarrow T_{\exp _{p}\xi} M ,\;\;\; i=1,2,
$$
that are characterized by the following property.
Let $\gamma :\mathbb R\to M $ be any smooth path in $M$ and $v(t)\in T_{\gamma(t)}M$ be any smooth vector field along this path then the derivative of the path $t\to \exp_{\gamma(t)}(v(t))$ is given by the formula
$$
\frac{d}{dt} \exp_{\gamma}(v)=E_{1}(\gamma,v)\dot\gamma+E_{2}(\gamma, v)\tilde \nabla_{t}v,
$$
where $\dot \gamma =\frac{d\gamma}{dt}.$
We have
$$
E_{1}(p,0)=E_{2}(p,0)=Id:T_{p}M\to T_{p}M,\;\;\;\forall p\in M,
$$
and $E_{i}(p, \xi)$ are uniformly invertible for sufficiently small $\xi.$ Since $M$ is compact, there exists a constant $ \epsilon$ such that for any $p\in M$ and $\xi\in T_{p}M$ with $|\xi|_{T_{p}M}\leq \epsilon$, $E_{i}(p, \xi)$ are  uniformly invertible.

\begin{lemma}\label{lem_est_f_r}
There exist two constant $ \mathsf C>0$ such that for any $(\kappa,\zeta)\in \ker D\mathcal{S}_{( \kappa_{o}, b_{o})}$
we have
\begin{equation}\label{decay_exp_J_map}
\left\|H_{r}f_{r}\circ I_{r}(\kappa,\zeta)|_{  |s_{i}|\geq \frac{r}{4}}\right\|_{k-1,2,\alpha }\leq   \mathsf Ce^{-(\fc-\alpha)\frac{r}{4}} (1+\|(\kappa,\zeta)\|),\;\;\;\;\forall \;r\geq 8R_{0}.
\end{equation}
\end{lemma}
\begin{proof}  Let $D\mathcal{S}_{( \kappa_{o}, b_{(r)})}$ act on \eqref{ker}. Since $d\mathfrak{i}_{(\kappa_{o},b_{o})}|_{R_{0}\leq |s_{i}|\leq 2r-R_{0}}=0 $  we get,  in  $\{R_{0} \leq |s_{i}|\leq 2r-R_{0}\}$
\begin{equation}\label{express_f_r}
f_{(r)}(I_r(\kappa,\zeta))=\bar{\p }_{j_{o},J_{0}} \xi_{(r)}+F^{1}_{u_{(r)}}\xi_{(r)}+F^{2}_{u_{(r)}}\p_{t}\xi_{(r)},\;\;\;\;\;\;\;\bar{\p}_{j,J} v_{(r)}=0,
\end{equation}
 Since  $$d v_{(r)}=E_{1}(u_{(r)},\xi_{(r)})(d u_{(r)})+E_{2}(u_{(r)},\xi_{(r)})(\tilde \nabla  \xi_{(r)}),$$   by \eqref{exp_J_map_3},  \eqref{exp_J_map_1}  and the elliptic estimate  we have
\begin{equation}
\left\|E_{2}(u_{(r)},\xi_{(r)})(\tilde \nabla \xi_{(r)})\right\|_{k-1,2,\alpha}\leq Ce^{-\fc |s_{i}|},\;\;\;\;{\frac{r}{4}\leq |s_{i}|\leq r}.
\end{equation}
Note that $[E_{2}(u_{(r)},\xi_{(r)})]^{-1}$ is uniformly bounded as $\|\xi_{(r)}\|_{k,2,\alpha,r}$ is small.
Then
\begin{equation}\label{a_nabla_h}
\left\| \tilde  \nabla  \xi_{(r)} |_{ |s_{i}|\geq \frac{ r}{4}}\right\|_{k-1,2,\alpha,r}\leq Ce^{-(\fc-\alpha)\frac{r}{4} },\end{equation}
for some constant $C>0.$
Note that
$$\tilde \nabla_{s} \xi_{(r)}=\frac{\p \xi_{(r)}}{\p s} -\sum_{i,j,l}\tilde{\Gamma}^{l}_{ij}\frac{\p u^{i}_{(r)}}{\p s} \xi_{(r)}^{j}\frac{\p }{\p x^{l}},$$
where $\tilde \nabla_{\frac{\p }{\p x^{i}}}\frac{\p }{\p x^{j}}= \sum_{l}\tilde{\Gamma}^{l}_{ij}\frac{\p }{\p x^{l}}.$
The lemma follows from \eqref{express_f_r}, \eqref{a_nabla_h} and the exponential decay of $F^{k}_{u_{(r)}},k=1,2.$
\end{proof}

\begin{lemma}\label{lem_xi^*_app_1}
There exists a constant $\mathsf C>0$ such that for any $(\kappa,\zeta)\in \ker D\mathcal{S}_{( \kappa_{o}, b_{o})}$   we have
$$\left\|\frac{\p}{\p s_{i}}(\xi^*_{(r)})_{i}|_{\frac{r}{2}\leq |s_{i}|\leq \frac{3r}{2}}\right\|_{k-1,2,\alpha}\leq \mathsf C e^{-(\fc-5\alpha)\frac{r}{4}}(\|\zeta\|_{k,2,\alpha}+1).$$
\end{lemma}
\begin{proof} From the definition of $I^*_{r}$ ( see \eqref{def_I*}) and \eqref{kernel_express} we have, in $\{|s_{i}|\geq \frac{r}{4}\}$,
$$(\kappa_{r},\xi^*_{(r)})=(\kappa,\zeta^{*}_{r})- Q^{*}P_{r}\circ H_{r}D(Id,\phi_{r})(\kappa,\zeta^{*}_{r}) + Q^* P_{r}\circ H_{r}f_{r}I_{r}(\kappa,\zeta)
=$$$$(\kappa,\zeta^{*}_{r})- (Q')^*P_r\circ H_r(DQ')^{-1}P_r\circ H_{r}D(Id,\phi_{r})(\kappa,\zeta^{*}_{r}) + (Q')^*P_r\circ H_r(DQ')^{-1}P_r\circ H_{r}f_{r}I_{r}(\kappa,\zeta).
$$
Note that
\begin{align}\label{eqn_xi_r*}
\left\| \zeta^{*}_{r}|_{\frac{r}{2}\leq |s_{i}|\leq \frac{3r}{2}}\right\|_{k,2,\alpha} \leq Ce^{\alpha r}   \left\| \zeta|_{\frac{r}{2}\leq |s_{i}|\leq \frac{3r}{2}}\right\|_{k,2,\alpha} \leq C e^{-(\fc-5\alpha)\frac{r}{4}}\left\| \zeta\right\|_{k,2,\alpha},
\end{align}
where we have applied Lemma \ref{tube_ker_c-1} with $R'=\frac{r}{2}$, $R=\frac{r}{4}$ and $\eta=0$ in the last inequality.
\v
By (e) of Lemma \ref{lem_def_HP},   Lemma \ref{est_DQ}, \eqref{local_calculation_2} and Lemma \ref{lem_est_f_r} we have
\begin{equation}
\|(Q')^*P_r\circ H_r(DQ')^{-1}P_r\circ H_{r}f_{r}I_{r}(\kappa,\zeta)|_{\frac{r}{2}\leq |s_{i}|\leq \frac{3r}{2}}\|_{k,2,\alpha}\leq Ce^{-(\fc-\alpha)\frac{r}{4}}(\|(\kappa,\zeta)\|+1).
\end{equation}
Similar, by (e) of Lemma \ref{lem_def_HP},   Lemma \ref{est_DQ} and  \eqref{HD-1}  we have
$$
\|(Q')^*P_r\circ H_r(DQ')^{-1}P_r\circ H_{r}D(Id,\phi_{r})(\kappa,\zeta^{*}_{r})|_{\frac{r}{2}\leq |s_{i}|\leq \frac{3r}{2}}\|_{k,2,\alpha}\leq Ce^{-(\fc-\alpha)\frac{r}{4}}(\|(\kappa,\zeta)\|+1).
$$
Using  Lemma \ref{lem_est_HDS} and Lemma \ref{lem_est_f_r} we have
\begin{align*}
&\left\|\frac{\p}{\p s_{i}}(\xi^*_{(r)})_{i}|_{\frac{r}{2}\leq |s_{i}|\leq \frac{3r}{2}}\right\|_{k-1,2,\alpha}
\leq   C\left(\left\| \zeta^{*}_{r}|_{\frac{r}{2}\leq |s_{i}|\leq \frac{3r}{2}}\right\|_{k,2,\alpha} +e^{-(\fc-\alpha)\frac{r}{4}}(\|(\kappa,\zeta)\|_{k,2,\alpha}+1)\right)\\
\leq &  Ce^{-(\fc-5\alpha)\frac{r}{4}}(\|(\kappa,\zeta)\|_{k,2,\alpha}+1)
\end{align*}
where we have used  \eqref{eqn_xi_r*} in the last inequality.
\end{proof}

\begin{lemma}\label{difference}
For any $\epsilon>0$,  there are two constants  $\mathsf C>0$ and $R_{1}>R_0$ depending only on $\epsilon,k,\alpha $ and the geometry of $M$, such that   for any  $r>R_{1}$  and $(\kappa,\zeta)\in \ker D\mathcal{S}_{( \kappa_{o}, b_{o})}$ with $\|(\kappa,\zeta)\|\leq \mathsf d,$  we have
 \begin{equation}
\label{local_calculation_4}
\left\|H_{r}\circ D(Id, \phi_{r})\circ \left(\frac{\partial}{\partial r}\left(\kappa^*_{r}, \xi^*_{(r)}\right)\right)\right\|_{k-2,2,\alpha} \leq \mathsf{C}\left( \mathsf {d}\left\|\frac{\partial}{\partial r}\left(\kappa^*_{r}, \xi^*_{(r)}\right)\right\|_{k-1,2,\alpha} + e^{-(\fc-5\alpha)\frac{r}{4} } \right).
\end{equation}
\end{lemma}
\v\n
{\bf Proof.} We estimate   $\left\|\beta_{1;2}  D (Id,\phi_{r})  \left(\frac{\partial}{\partial r}\left(\kappa^*_{r}, \xi^*_{(r)}\right)\right)\right\|_{k-1,2,\alpha}.$\;  The estimates of  $\left\|\beta_{2;2}  D (Id,\phi_{r})  \left(\frac{\partial}{\partial r}\left(\kappa^*_{r}, \xi^*_{(r)}\right)\right)\right\|_{k-1,2,\alpha}$ is the same. Let   $b=(r,\tau,j_{o},v_{(r)}).$
First we construct  $\tilde{u}_{(r)}$ and $\tilde{\xi}_{(r)}$   defined over $\Sigma_{1}$ as follows:
\begin{align}\label{defn-1}
\tilde{u}_{(r)}&=\left\{
\begin{array}{ll}
u_{(r)}, & if\ s_{1}\in \Sigma_{1}(r+1), \\
u_1(q)+\beta(r+2-s_1) (u_{(r)}(s_1,t_1)-u_{1}(q)), & if\ s_{1}\geq r+1
\end{array}
\right.
\\\label{defn-xi1}
\tilde{\xi}_{(r)}&=\left\{
\begin{array}{ll}
\xi_{(r)}, & if\ s_{1}\in \Sigma_{1}(r+1), \\
\beta(r+2-s_1) \xi_{(r)}(s_1,t_1), & if\ s_{1}\geq r+1
\end{array}
\right. .
\end{align}
Define $\tilde v_{(r)}=\exp_{\tilde{u}_{(r)}}\tilde{\xi}_{(r)}.$ So the meaning of $\frac{\p \tilde u_{(r)}}{\p r} ,\frac{\p \tilde v_{(r)}}{\p r} $ and $\nabla   \tilde \xi_{(r)} $ is clear.
 Set
$$ \Lambda_{r}:=P_{ b,  b_{(r)}}\left(\bar{\partial }_{j,J} v_{(r)}+ \mathfrak{i}(\kappa_o+\kappa_{r},b_{r})\right),\;\;\;\;\tilde\Lambda_{r}:=P_{\tilde b,\tilde b_{(r)}}\left(\bar{\partial }_{j,J} \tilde v_{(r)}+ \mathfrak{i}(\kappa_o+\kappa_{r},\tilde v_{(r)})\right).$$
where $\tilde b=(z,a_{o},\tilde v_{(r)})$  and $\tilde b_{(r)}=(z,a_{o},\tilde u_{(r)})$. We calculate $\frac{\p}{\p r}\left(\beta_{1;2}\Lambda_{r}\right)$. By
\eqref{local_kernel_express} we have $\Lambda_{r}=0$ and $\beta_{1;2}\Lambda_{r}=0.$ Then
  \begin{align*}
  \frac{\p}{\p r}\left(\beta_{1;2}\Lambda_{r}\right)&=  \frac{\p}{\p r}\left(\beta_{1;2}\tilde\Lambda_{r}\right)= \beta_{1;2}P_{\tilde b,\tilde b_{(r)}}\left(D_{\tilde v_{(r)}}\frac{\p\tilde v_{(r)}}{\p r}+d\mathfrak{i}_{(\kappa_o+\kappa_{r},\tilde b_{r})}\left(\frac{\p\kappa_{r}}{\p r}, \frac{\p \tilde v_{(r)}}{\p r} \right)\right) \\
  &= \beta_{1;2}P_{\tilde b,\tilde b_{(r)}}\left(D_{\tilde v_{(r)}}\left(\left(E_{1}(\tilde u_{(r)}),\tilde \xi_{(r)}\right)\frac{\p \tilde u_{(r)}}{\p r} +E_{2}\left(\tilde u_{(r)}),\tilde \xi_{(r)}\right)\tilde \nabla_{r} \tilde \xi_{r} \right)\right. \\
  &\;\;\;\left.+d\mathfrak{i}_{(\kappa_o+\kappa_{r},\tilde b_{r})}\left(\frac{\p\kappa_{r}}{\p r},\frac{\p \tilde v_{(r)}}{\p r} \right)\right)
  \end{align*}
 Since $$d\mathfrak{i}_{(\kappa_o+\kappa_{r},\tilde b_{r})}\left.\left(\frac{\p\kappa_{r}}{\p r},\frac{\p \tilde v_{(r)}}{\p r} \right)\right|_{|s_{i}|\geq R_{0}}=0,\;\;\;\frac{\p \tilde u_{(r)}}{\p r} |_{|s_{i}|\leq R_{0}}=0, \;\;\;\; \left.\tilde\nabla_{r} \tilde \xi_{r}\right|_{\Sigma(R_{0})}=\left.\phi_{r}\frac{\p \xi^{*}_{(r)}}{\p r}\right|_{\Sigma(R_{0})}$$ we can conclude that $$d\mathfrak{i}_{(\kappa_o+\kappa_{r},\tilde b_{r})}\left(\frac{\p\kappa_{r}}{\p r},\frac{\p \tilde v_{(r)}}{\p r} \right)=d\mathfrak{i}_{(\kappa_o+\kappa_{r},v)}
\left(\tfrac{\p \kappa_{r}}{\p r},E_{2}(u_{(r)},\xi_{(r)})\phi_{r} \left(\tfrac{\p \xi^*_{(r)}}{\p r}\right)  \right).$$
Note that in $\{s_{1}\leq r+1\}$
\begin{equation}\label{rel.eqn_xi}
\tilde \xi_{(r)}=\xi_{(r)}=\xi_{1}^*(s_{1},t_{1})+\xi_{2}^*(s_{1}-2r,t_{1}-\tau).
\end{equation}
Taking derivative $\nabla_{r}$ of \eqref{rel.eqn_xi}  we get, in $\{s_{1}\leq r+1\}$
$$
\tilde\nabla_{r} \tilde \xi_{(r)}=\phi_{r}\tilde \nabla_{r}   \xi^{*}_{(r)}-2\tilde \nabla_{s_{2}}(\xi^{*}_{(r)})_{2}.
$$
Then  by $v_{(r)}|_{|s_{i}|\leq r+1}=\tilde v_{(r)}|_{|s_{i}|\leq r+1}$ and  $u_{(r)}|_{|s_{i}|\leq r+1}=\tilde u_{(r)}|_{|s_{i}|\leq r+1}$ we get,
\begin{align}\label{eqn_beta_P1}
\tfrac{\p}{\p r}\left(\beta_{1;2} \Lambda_{r} \right)=P_{ b,b_{(r)}}\beta_{1;2}(s_1,t_1)\left((E)+ (G)+ (H) + (F)\right),\end{align}
where $$(E)=D_{v_{(r)}}\left(E_{2}(u_{(r)},\xi_{(r)})\phi_{r} \frac{\p  \xi^*_{(r)}}{\p r}  \right),\;\;\;\;(F)=d\mathfrak{i}_{(\kappa_o+\kappa_{r},\tilde b_{r})}
\left(\tfrac{\p \kappa_{r}}{\p r},E_{2}(u_{(r)},\xi_{(r)})\phi_{r} \left(\tfrac{\p \xi^*_{(r)}}{\p r}\right)  \right),$$
$$(G)=-2D_{v_{(r)}}\left(E_{2}(u_{(r)},\xi_{(r)})\nabla_{s_2} (\xi_{(r)}^{*})_{2}\right),\;\;\;\;$$$$
(H)=D_{v_{(r)}}\left(E_{1}({u_{(r)}},\xi_{(r)}) \tfrac{\p (\tilde u_{(r)} )}{\p r}  +E_{2}(u_{(r)},\xi_{(r)})\phi_{r}\left(\tilde\nabla_{\frac{\p}{\p r}}  \xi^*_{(r)}-  \frac{\p  \xi^*_{(r)}}{\p r}   \right)\right).
$$

 \v
By $\frac{\p }{\p r}(\beta_{1;2}\Lambda_{r} )=0$ and \eqref{eqn_beta_P1}  we have
\begin{align*}\label{est_o}
&\left\|\beta_{1;2}D(Id,\phi_{r})\left(\tfrac{\partial}{\partial r}\left(\kappa^*_{r}, \xi^*_{(r)}\right)\right)\right\|_{k-2,2,\alpha}\\ =&
\left\| \beta_{1;2} D(Id,\phi_{r})\left(\tfrac{\partial}{\partial r}\left(\kappa^*_{r}, \xi^*_{(r)}\right)\right)-\tfrac{\p}{\p r}\left(\beta_{1;2}  \Lambda_{r} \right)\right\| _{k-2,2,\alpha}
  \leq (\mbox{I}) + (\mbox{II})+(\mbox{III}) +(\mbox{IV}) ,
\end{align*}
where
\begin{equation}
(\mbox{I})=\left\|\beta_{1;2}\left(P_{b,b_{(r)}}D_{v_{(r)}}\left(E_{2}(u_{(r)},\xi_{(r)})\phi_{r}\frac{\p}{\p r} \xi^*_{(r)}  \right)-D_{u_{(r)}}\left(\phi_{r}\tfrac{\p\xi^*_{(r)} }{\p r} \right)\right)\right\|_{k-2,2,\alpha},\;\;
\end{equation}
\begin{equation}
(\mbox{II})= \left\| \beta_{1;2}  P_{b,b_{(r)}}(H)\right\|_{k-2,2,\alpha},\;\;\;\;
(\mbox{III})= \left\| \beta_{1;2}  P_{b,b_{(r)}}(G)\right\|_{k-2,2,\alpha},\;\;\end{equation}
\begin{equation}(\mbox{IV})=\left\|\beta_{1;2}P_{b,b_{(r)}}(F)
- \beta_{1;2}d \mathfrak{i}_{( \kappa_{o},u_{(r)})}\left(\tfrac{\p}{\p r}\kappa^*_{r}, \phi_{r}\tfrac{\p \xi^*_{(r)}}{\p r}\right)\right\|_{k-2,2,\alpha}.
\end{equation}
We calculate $(\mbox{I})$. There is a constant $C>0$ depending only on  the geometry of $M$ such that
\begin{align*}
(\mbox{I})\leq &C\|\xi_{(r)}\|_{k,2,\alpha,r} \left\|\beta_{1;2}\phi_{r}\tfrac{\p}{\p r}   \xi^*_{(r)}   \right\|_{k-1,2,\alpha}
\leq  C \|\xi_{(r)}\|_{k,2,\alpha,r} \left\| \tfrac{\p}{\p r}   \xi^*_{(r)}   \right\|_{k-1,2,\alpha}.
\end{align*}
Essentially, this estimate has been obtained by McDuff and D. Salamon ( see (3.5.5), P68,  \cite{MS})
we omit the proof here.
\v\n
Now we calculate $(\mbox{II})$. Since
$$ \tilde\nabla_{\frac{\p}{\p r}}  \xi^*_{(r)}-  \frac{\p  \xi^*_{(r)}}{\p r}=\sum \Gamma_{ij}^{k}\frac{\p u^{i}_{(r)}}{\p r} (\xi^{*}_{(r)})^{j}\frac{\p}{\p x^k},$$
by the definition of $u_{(r)}$ we conclude that $$\supp \beta_{1;2}P_{b,b_{(r)}}(H)\subset \{\tfrac{r}{2}\leq |s_{i}|\leq r+1\}.$$ Then by  \eqref{exp_J_map_3} and \eqref{local_calculation_3} we have
\begin{equation}\label{est_II}
(\mbox{II})\leq Ce^{-(\fc-\alpha)\frac{r}{2}}
\end{equation} for a constant $C>0$.
\v
It follows from Lemma \ref{lem_xi^*_app_1} that
$$
(\mbox{III})\leq  C\| \tfrac{\p}{\p s_{2}}(\xi_{(r)}^*)_{2} |_{ \frac{r}{2}\leq |s_{2}|\leq r+1}\|_{k-1,2,\alpha}\leq Ce^{-(\fc-5\alpha)\frac{r}{4}}(\|\zeta\|_{k,2,\alpha}+1).
$$
Finally we estimate $(\mbox{IV})$. Let $\kappa(\lambda)= \kappa_{o}+\lambda\kappa_{r}$, $v(\lambda)=\exp_{u(r)} (\lambda \xi_{(r)}) $ and $b(\lambda)=(\kappa(\lambda),v(\lambda)), \lambda\in[0,1]. $ In the following we omit the restriction $\{|s_{i}|\leq r+1\}.$ Since $\mathfrak{i}$ and parallel translation are smooth with respect to $(\kappa,u),$ we have
\begin{align}
(\mbox{IV})&=\left\|\beta_{1;2} P_{b,b_{(r)}}(F)-\beta_{1;2} d \mathfrak{i}_{( \kappa_{o},u_{(r)})}(\tfrac{\p\kappa_{r}}{\p r},\phi_{r}\tfrac{\p}{\p r}\xi_{(r)}^*)\right\|_{k-2,2,\alpha,r} \nonumber \\
&=\left\| \beta_{1;2} \int_{0}^1  \tfrac{d}{d\lambda} \left( P_{b(\lambda),b_{(r)}}d\mathfrak{i}_{(\kappa(\lambda),v(\lambda))}\left(\tfrac{\p \kappa_r}{\p r},E_{u_{(r)}}(\lambda\xi_{(r)})\phi_{r}\left(\tfrac{\p \xi^*_{(r)}}{\p r}\right)\right) \right)d\lambda \right\|_{k-2,2,\alpha,r}\nonumber \\
&\leq C\|(\kappa_{r},\xi_{(r)})\|_{k,2,\alpha}  \left\|\beta_{1;2}\left(\tfrac{\p \kappa_r}{\p r},  \phi_{r}\tfrac{\p \xi^*_{(r)}}{\p r}\right)  \right\|_{k-1,2,\alpha} \nonumber\\
&\leq C\|(\kappa_{r}^*,\xi^*_{(r)})\|_{k,2,\alpha}\left\| \left(\tfrac{\p \kappa_r^*}{\p r},  \tfrac{\p \xi^*_{(r)}}{\p r}\right)  \right\|_{k-1,2,\alpha}. \nonumber
\end{align}
Then the lemma follows from  the estimates of (I), (II), (III), (IV) and $\|(\kappa_{r}^*,\xi^*_{(r)})\|_{k,2,\alpha}\leq C\mathsf{d}$. $\Box$
\v

\v
\n{\bf Proof of Theorem \ref{lem_est_rI}.}
From \eqref{kernel_express} we have
\begin{align}
\label{partial_r_kernel_estimate}
 \frac{\partial}{\partial r} (\kappa^*_{r}, \xi^*_{(r)})= \frac{\partial}{\partial r} I^*_{r}(\kappa,\zeta) + \frac{\partial}{\partial r}\left (Q^* P_{r}\right) H_{r} f_{(r)}I_{r}(\kappa,\zeta)  + Q^* P_{r}\frac{\partial}{\partial r}  (H_{r} f_{(r)}I_{r}(\kappa,\zeta)).
\end{align}
Then multiplying $H_{r} D(Id,\phi_{r})$ on both sides of \eqref{partial_r_kernel_estimate} we get
 \begin{align*}
 H_{r} D(Id,\phi_{r})\left(\frac{\partial }{\partial r}(\kappa^*_{r},\xi^*_{(r)})\right)=&H_{r}D(Id,\phi_{r})\left(\frac{\partial }{\partial r}I^*_{r}(\kappa,\zeta)\right)+H_{r}P_{r}\frac{\partial}{\partial r}(H_{r}f_{(r)}I_{r}(\kappa,\zeta)) \nonumber\\&+ H_{r}D(Id,\phi_{r})\left(\frac{\partial}{\partial r} (Q^*P_{r})H_{r}f_{(r)}I_{r}(\kappa,\zeta)\right).
 \end{align*}
In the above calculation we have used $D(Id,\phi_{r})\circ Q^{*} =DQ=Id$.
It follows together with \eqref{local_calculation_4} that
\begin{equation}\label{operator_partial_right_inverse_kernel}
\left\|H_{r}P_{r}\frac{\partial}{\partial r}(H_{r}f_{(r)}(I_{r}(\kappa,\zeta)))\right\|_{k-2,2,\alpha}
\leq C\mathsf{d}\left\|\frac{\partial}{\partial r} (\kappa^*_{r},\xi^*_{(r)})\right\|_{k-1,2,\alpha}
+  Ce^{-(\fc-5\alpha)\tfrac{r}{4} }+ (A) + (B),\end{equation}
where
\begin{align*}(A)= &\left\| H_{r}D(Id,\phi_{r})\left(\frac{\partial}{\partial r}I^*_{r}(\kappa,\zeta)\right)\right\|_{k-2,2,\alpha},\;\;\;
 (B)=&\left\| H_{r} D(Id,\phi_{r})\left( \frac{\partial}{\partial r}(Q^*P_{r})\circ H_{r}f_{(r)}(I_{r}(\kappa,\zeta))\right)\right\|_{k-2,2,\alpha}.
 \end{align*}
By Lemma \ref{lem_est_HDS}, Lemma \ref{lem_est_rI-1}, taking  $R=\frac{r}{4}, R'=\frac{r}{2}$ in Lemma \ref{tube_ker_c-1}   we conclude that
\begin{align*}
 (A)\leq  &C \left\|\frac{\partial}{\partial r}I^*_{r}(\kappa,\zeta)\right\|_{k-1,2,\alpha}\leq   C   \|\zeta_{i}|_{\frac{r}{2}\leq |s_i|\leq \frac{3r}{2}}\|_{k  ,2,\alpha}+Ce^{-(\fc-\alpha)\frac{r}{2}} |\hat \zeta_0|\\
 \leq  &C e^{-(\fc-\alpha)\frac{r}{4}}( \|\zeta_{i}|_{|s_i|\geq \frac{r}{4}}\|_{k  ,2,\alpha}+|\hat \zeta_0|)
 \end{align*}
where $\zeta=(\zeta_{1}+\hat \zeta_0,\zeta_{2}+\hat \zeta_0)$. By \eqref{Q*P-2},  Lemma \ref{lem_def_HP},
Lemma \ref{est_DQ}, Lemma \ref{lem_est_f_r} and \eqref{local_calculation_2}  we get
\begin{equation}\label{eqn_B_1}
\left\|\frac{\partial}{\partial r}(Q^*P_{r})\circ H_{r}f_{(r)}(I_{r}(\kappa,\zeta))\right\|_{k-1,2,\alpha}
 \leq Ce^{-(\fc -\alpha)\frac{r}{4}}(\|(\kappa,\zeta)\|_{k,2,\alpha} + 1) .
\end{equation}
By Lemma \ref{lem_est_HDS} we have
$$
(B)\leq \left\|\frac{\partial}{\partial r}(Q^*P_{r})\circ H_{r}f_{(r)}(I_{r}(\kappa,\zeta))\right\|_{k-1,2,\alpha}\leq Ce^{-(\fc -\alpha)\frac{r}{4}}(\|(\kappa,\zeta)\|_{k,2,\alpha} + 1)
$$
Inserting the estimates of $(A)$ and $(B)$ into \eqref{operator_partial_right_inverse_kernel} we have
\begin{equation}\label{estimate_operator_partial_I}
\left\| H_{r}P_{r}\frac{\partial}{\partial r}(H_{r}f_{(r)}(I_{r}(\kappa,\zeta)))\right\|_{k-1,2,\alpha}\leq C  e^{-(\mathfrak{c} -5\alpha)\frac{r}{4} }  +C\mathsf{d}\left\| \frac{\partial}{\partial r}\left(\kappa^*_{r}, \xi^*_{(r)}\right)\right\|_{k-1,2,\alpha}.
\end{equation}
By \eqref{Q*P-1}, Lemma \ref{lem_def_HP} and Lemma \ref{est_DQ}  we get
\begin{align*}\left\|Q^*P_{r}\frac{\partial}{\partial r}\left( H_{r} f_{(r)}(I_{r}(\kappa,\zeta))\right)\right\|_{k-1,2,\alpha} &=\left\|Q^*P_{r}\left(H_{r}P_{r}\frac{\partial}{\partial r}\left( H_{r} f_{(r)}(I_{r}(\kappa,\zeta))\right)\right)\right\|_{k-1,2,\alpha} \\
\leq &
C\left\|H_rP_{r}\frac{\partial}{\partial r}\left( H_{r} f_{(r)}(I_{r}(\kappa,\zeta))\right)\right\|_{k-1,2,\alpha}.
\end{align*}
 Using \eqref{eqn_B_1}, \eqref{estimate_operator_partial_I} and Lemma \ref{lem_est_f_r}, Lemma \ref{lem_est_rI-1} we get
\begin{equation}\label{exp_ker_coord_est}
\left\|\frac{\partial}{\partial r}\left(\kappa^*_{r}, \xi^*_{(r)}\right)\right\|_{k-1,2,\alpha } \leq Ce^{-(\mathfrak{c} -5\alpha)\frac{r}{4} } +C\mathsf{d} \left\|\frac{\partial}{\partial r}\left(\kappa^*_{r}, \xi^*_{(r)}\right)\right\|_{k-1,2,\alpha } \end{equation}
Choose  $\mathsf{d}$   small  such that $4C\mathsf{d}<1.$ Then Theorem \ref{lem_est_rI} is proved.
$\Box$
\v

\n{\bf Proof of Corollary \ref{coordinate_decay}.} It is easy to see that, restricting to $\Sigma(R_0)$, we have $$I^*_r(\kappa,\zeta)+Q^*_{( \kappa_{o},b_{(r)})}f_{(r)}(I_r(\kappa,\zeta))=I_r(\kappa,\zeta)+Q_{( \kappa_{o},b_{(r)})}f_{(r)}(I_r(\kappa,\zeta)).$$
So we have an estimate for $\left\|\left.\frac{\p}{\p r}\left[I_r(\kappa,\zeta)+Q_{( \kappa_{o},b_{(r)})}f_{(r)}(I_r(\kappa,\zeta))\right]\right|_{\Sigma(R_0)}\right\|_{k,2}$. By Sobolev embedding theorem and the standard elliptic estimates we get Theorem \ref{coordinate_decay}. $\Box$
\begin{remark}
Repeating the all arguments in this section,  one can prove that there exists a constant $\mathsf C>0 $ such that
\begin{equation}
\left\|\frac{\partial }{\partial \tau}\left[I^*_r(\kappa,\zeta)+Q^*_{( \kappa_{o},b_{(r)})}f_{(r)}(I_r(\kappa,\zeta))\right]
 \right\|_{k-1,2,\alpha}\leq \mathsf C e^{-(\fc-5\alpha)\tfrac{r}{4} }(\mathsf{d}+1)
\end{equation}
 for any $(\kappa,\zeta)\in \ker D\mathcal {S}_{( \kappa_{o}, b_{o})}$. Since we need only a bound for $\|\frac{\p}{\p \tau}(\cdot)\|$, the calculations are much simpler. For example, consider (b) in Lemma \ref{lem_def_HP}, we have
\begin{equation}\label{eqn_HP_tau}
\left\|\frac{\p}{\p \tau}(H_{r}P_r)(\eta_{1},\eta_{2} )\right\|_{k-2,2,\alpha} \leq  C\sum_{i=1}^{2} \left\|\eta_{i}|_{r-1 \leq |s_{i}|\leq  r+1}\right\|_{\Sigma_{i},k-1,2,\alpha}.
\end{equation}
In fact, by \eqref{beta} we have
\begin{align}\label{eqn_eta_tau}
\frac{\p \tilde \eta_{1}}{\p \tau}=
 - \beta_{1;2}\sqrt{1- \beta_{1;2}^2(s_1)}\tfrac{\p( \eta_{2})}{\p s_{2}}(s_1-2r,t_1-\tau).
\end{align}
Then \eqref{eqn_HP_tau} follows from $H_{r}P_{r}(\eta_{1},\eta_{2})=(\tilde \eta_{1},\tilde \eta_{2})$ and \eqref{eqn_eta_tau}.
\end{remark}

\section{Extension}\label{Extension}
In this section we extend the Theorem \ref{coordinate_decay} to more general setting.

\subsection{Gluing several nodes}

\v
Let $(\Sigma, j,{\bf y}, {\bf q})$ be a marked nodal Riemann surface of genus $g$ with $n$ marked points ${\bf y}=(y_1,...,y_n)$ and $\mathfrak{e}$ nodal points ${\bf q}=(q_{1},\cdots,q_{\mathfrak{e}})$. Suppose that $\Sigma$ has $\iota$ smooth components $\Sigma_{i}$.
We assume that every component $(\Sigma_{\nu},j_{\nu},{\bf y}_{\nu},{\bf q}_{\nu})$ is stable. Let $\mathbf{A}=\mathbf{A}_1\times \mathbf{A}_2\times...\times \mathbf{A}_{\iota}$ be the space of complex structures ( incluing marked points ). Let $u=(u_{1},\cdots,u_{\iota})$, where $u_{\nu}:\Sigma_{\nu} \to M$ be $(j_{\nu},J)$-holomorphic map.
\v
For every node $q_i$ we choose the holomorphic cylindrical coordinates near the node $q_i$. We glue $\Sigma$ and $u$ at each node $q_i$ with parameter $(r_i, \tau_i)$ as in \S\ref{s_intro_2} to get $\Sigma_{(\mathbf{r})}$ and the pregluing map $u_{(\mathbf{r})}$.  Denote $z_{i}=e^{-r_{i}-2\pi \sqrt{-1}\tau_{i}}$ and $\mathbf z=(z_{1},\cdots,z_{\mathfrak{e}})$. Set
 $$b_o=(a_{o},0, u),\;\;\; b_{(\mathbf{r})}:=(a_{o},u_{(\mathbf r)}).$$
We can define $\mathcal B_{(\mathbf r)},$ $W^{k,2,\alpha}_{\mathbf{r},u_{(\mathbf{r})}}$ and
$L^{k-1,2,\alpha}_{\mathbf{r},u_{(\mathbf{r})}}$ as in section \ref{s_intro_2}. The Weil-Pertersson  metric induces a distance $d_{\mathbf{A}}(a_o,a)$ on $\mathbf{A}$.
Set
 \begin{align}\label{neig.-1}
O_{b_{o}}(\mathrm{R},\delta,\rho ):=&  \left\{(a ,\mathbf{z},v_{(\mathbf r)})| (a,\mathbf z)\in \mathbf{A} \times \mathbb{D}_{\mathbf{r}} ,\;v_{(\mathbf r)}\in \mathcal B_{(\mathbf r)},\;|r_{i}|< e^{-2\mathrm{R}},\; \right.\\
 \nonumber & \left.d_{\mathbf{A}}(j_{a},j_{o})<\delta,\; \| h_{(\mathbf r)}\|_{k,2,\alpha,\mathbf r}<\rho   \right\},
\end{align}
where $v_{({\mathbf r})}=\exp_{u_{({\mathbf r})}}(h_{({\mathbf r})})$. Denote by $\mathbf{g}_{a}$ the metric on $(\Sigma,j_{a}),$ and $|\mathbf{r}|:=min\{r_1,...,r_{\mathfrak{e}}\}.$

\begin{lemma}\label{isomor of ker}
For $|\mathbf{r}|>R_0$ there is an isomorphism
$$I_{(\mathbf{r})}: \ker D \mathcal{S}_{( \kappa_{o}, b_{o})}\longrightarrow \ker D \mathcal{S}_{( \kappa_{o},b_{(\mathbf{r})})}.$$
\end{lemma}

In order to get a global regularization we need to take a sum of several $K_{b_i}$. So we consider the following setting.
Let $K$ be a $N$-dimensional linear space. Let
$$\mathfrak{i}: K\times \mathbf{A}\times W^{k,2,\alpha}\left(\Sigma(R_0),(u\mid_{\Sigma(R_0)})^*TM \right)$$$$\to
W^{k-1,2,\alpha}\left(\Sigma(R_0),(u\mid_{\Sigma(R_0)})^*TM\otimes \wedge^{0,1}_{j_a}T^{*}\Sigma(R_0)\right)
$$
be a smooth map such that
 $D_{v} + d\mathfrak{i}_{(\kappa,a,v\mid_{\Sigma(R_0)})}$
is surjective for any $(\kappa,b)\in K\times  O_{b_o}(\mathrm{R},\delta,\rho)$, where $b=(a,\mathbf z,v)$, $v=\exp_{u_{(\mathbf{r})}}h$.
\v\n
Define a thickned Fredholm system
$(K\times O_{b_{o}}(\mathrm{R},\delta,\rho), K\times \E|_{O_{b_{o}}(\mathrm{R},\delta,\rho)}, \mathcal{S})$ with
 \begin{equation}\label{regu.equ}
\mathcal{S}(\kappa,b)=\bar{\partial}_{j_a,J}v +  \mathfrak{i}(\kappa, b).
\end{equation}

For fixed $(\mathbf{r})$ we consider the family of maps:
\begin{align*}& \mathcal{F}_{(\mathbf{r})}: K   \times \mathbf{A} \times W^{k,2,\alpha}\left(\Sigma_{(\mathbf r)},u_{(\mathbf r)}^*TM \right)\to W^{k-1,2,\alpha}\left(\Sigma_{(\mathbf r)},(u_{(\mathbf r)}^*TM\otimes \wedge^{0,1}_{j_a}T^{*}\Sigma_{(\mathbf r)}\right),\;\;\;\;\;\\
& \mathcal{F}_{(\mathbf{r})}(\kappa,a,h)=\Psi_{j_{a},j_{a_{o}}}\Phi_{u_{(\mathbf{r})}}(h)^{-1}\left(\bar{\partial}_{j_{a},J}v
+ \mathfrak{i}(\kappa,b)\right),
\end{align*}
where $b=( a,\mathbf{z}  , v),\;v=\exp_{u_{(\mathbf{r})}}h$  and
$ \Psi_{j_{a},j_{a_{o}}} $ is defined in section \S\ref{An isomorphism}.

By implicit function theorem (Theorem \ref{details_implicit_function_theorem}, Theorem \ref{smooth_implicit_function_theorem}),
there exist   $R>0$, a small neighborhood of $O_{a_{o}}(\delta)\subset \mathbf A$ and  a small neighborhood $O_{(\mathbf{r})}$ of $0 \in \ker\;D\mathcal S|_{b_{(\mathbf{r})}}$ and a unique smooth map
$$f_{(\mathbf{r})}: O_{a_{o}}(\delta)
\times O_{(\mathbf{r})}\rightarrow W^{k-1,2,\alpha}(\Sigma_{(\mathbf{r})},u_{(\mathbf{r})}^{*}TM\otimes \wedge^{0,1}T\Sigma_{(\mathbf{r})})$$ such that for any $(a,(\kappa,\zeta))\in O_{a_{o}}(\delta)
\times O_{(\mathbf{r})}$ and $|\mathbf r|>R,$
\begin{equation}\label{glu_solu_J_hol}
\mathcal  F_{(\mathbf r)} \left(a, I_{\mathbf{r}}(\kappa,\zeta)+Q_{(\kappa_{o},b_{(\mathbf{r})})}\circ f_{a,(\mathbf{r})} \circ I_{\mathbf{r}}(\kappa,\zeta) \right) =0.
\end{equation}

Denote by $Q_{(\kappa_{o},b_{(\mathbf{r})})}$ the right inverse of $D{\mathcal
S}_{(\kappa_o,b_{(\mathbf{r})})}$.
Then Theorem \ref{coordinate_decay} can be directly extended as
\begin{theorem}\label{coordinate_decay-2}
 Let $l\in \mathbb Z^+$ be a fixed integer. There exists positive  constants  $\mathsf C_{2,l}, \hbar, R_{0}$ such that for any $(\kappa,\zeta)\in \ker D \mathcal{S}_{(\kappa_{o},b_{o})}$ with $\|(\kappa,\zeta)\|< \mathsf{d}$, restricting to the compact set $\Sigma(R_0)$, for any $a\in \bigotimes_{i=1}^{\mathfrak{e}} O_i$, the following estimate hold
$$
\left\|\frac{\partial }{\partial r_{i}}\left(I_{\mathbf{r}}(\kappa,\zeta)+Q_{(\kappa_{o},b_{(\mathbf{r})})}\circ f_{a,(\mathbf{r})} \circ I_{\mathbf{r}}(\kappa,\zeta) \right) \right\|_{C^{l}(\Sigma(R_0))}+$$
$$\left\|\frac{\partial }{\partial \tau_{i}}\left(I_{\mathbf{r}}(\kappa,\zeta)+Q_{(\kappa_{o},b_{(\mathbf{r})})}\circ f_{a,(\mathbf{r})} \circ I_{\mathbf{r}}(\kappa,\zeta) \right) \right\|_{C^{l}(\Sigma(R_0))} \leq  C_{2,l}e^{-(\fc-5\alpha)\tfrac{r_{i}}{4} },
$$
$i=1,\cdots,\mathfrak{e}$.
\end{theorem}

\subsection{Estimates of higher derivatives}
Let $(s_{l}^{i},t_{l}^{i}),l=1,2$ be the cylinder coordinates near the node $q_{i}$. Set $$V_{i}:=\cup_{l=1}^{2}\left\{\left.\left(s_{l}^{i},t_{l}^{i}\right)\in\Sigma\;\right|\;\tfrac{r_{i}}{2} \leq |s_{l}^{i}|\leq \tfrac{3r_{i}}{2} \right\}.$$ Denote $$Glu_{a,(\mathbf{r})}(\kappa,\zeta)=I_{(\mathbf{r})}(\kappa,\zeta)+Q_{(\kappa_{o},b_{(\mathbf{r})})}\circ f_{a,(\mathbf{r})} \circ I_{\mathbf{r}}(\kappa,\zeta),$$
$$Glu^{*}_{a,(\mathbf{r})}(\kappa,\zeta)=I^{*}_{(\mathbf{r})}(\kappa,\zeta)+Q^{*}_{(\kappa_{o},b_{(\mathbf{r})})}\circ f_{a,(\mathbf{r})} \circ I_{\mathbf{r}}(\kappa,\zeta).$$
In this subsection we prove
\begin{theorem}\label{coordinate_decay-2}
	There exists positive  constants  $\mathsf C, \mathsf{d}, R_{0}$ such that for any $(\kappa,\zeta)\in \ker D \mathcal{S}_{(\kappa_{o},b_{o})}$ with $\|(\kappa,\zeta)\|< \mathsf{d}$, for any $X_{i}\in \{\frac{\p}{\p r_{i}},\frac{\p}{\p \tau_{i}}\},i=1,\cdots,\mathfrak{e}$, the following estimate hold
	$$ \left\|X_{i}X_{j}\left(Glu^{*}_{a,(\mathbf{r})}(\kappa,\zeta) \right) \right\|_{k-2,2,\alpha}+\left\|\left.X_{i}\left(Glu^{*}_{a,(\mathbf{r})}(\kappa,\zeta) \right)\right|_{V_{j}} \right\|_{k-2,2,\alpha}
	\leq  \mathsf{C} e^{-(\fc-5\alpha)\tfrac{r_{i}+r_{j}}{4} },$$
	$1\leq i\neq j\leq\mathfrak{e},$
	for any $a\in \bigotimes_{i=1}^{\mathfrak{e}} O_i$.
In particular, restricting to the compact set $\Sigma(R_0)$ and for any $l\in \mathbb Z^+$,
$$ \left\|X_{i}X_{j}\left(Glu_{a,(\mathbf{r})}(\kappa,\zeta) \right)  \right\|_{C^{l}(\Sigma(R_{0}))}
	\leq  C_{l}e^{-(\fc-5\alpha)\tfrac{r_{i}+r_{j}}{4} },$$
for some  constant $\mathsf{C}_{l}$.
\end{theorem}
\v\n
{\bf Proof} We give a sketch of the proof. Denote $\eta=(\eta_{1},\cdots,\eta_{\iota}) .$
	Set
	$$
	D_{l}^{i}(R_{0})=\left\{\left.\left(s_{l}^{i},t_{l}^{i}\right)\in\Sigma\;\right| |s_{l}^{i}|\geq R_{0} \right\},\;\;\;\;D^{i}(R_{0})=\cup_{l=1}^{2}D_{l}^{i}(R_{0}).
	$$
	   Denote
$$
\beta_{1,i;R}(s^{i}_{1})=\beta\left(\frac{1}{2}+\frac{r_{i}-s^{i}_1}{R}\right),\;\;\;\;\beta_{2,i;R}(s^{i}_{2})
=\sqrt{1-\beta^2\left(\frac{1}{2}-\frac{s^{i}_{2}+r_{i}}{R}\right)}.
$$
 We can define $h_{\mathbf r}^{*},$  $h_{(\mathbf r)},\tilde \xi_{(\mathbf r)} ,$ $H_{\mathbf r}$ and $P_{\mathbf r},\cdots$  as   before. Let $\eta_{l}^{i}=\eta|_{D^{i}_{l}(R_{0})},l=1,2.$ Obviously
 $H_{\mathbf r}P_{\mathbf r}(\eta)|_{D^{i}(R_{0})}=(\beta_{1,i;2}(\sum_{l=1}^{2}\beta_{l,i;2}\eta_{l}^{i} ),\beta_{2,i;2}(\sum_{l=1}^{2}\beta_{l,i;2}\eta_{l}^{i} ) ) $.
  It is easy to see that for any $1\leq i\neq j,\ell\leq \mathfrak{e},$   and $l=1,2,$
$$
\frac{\p(H_{\mathbf r}P_{\mathbf r})}{\p r_{i} }(\eta)|_{V_{j}}=0,\;\;\;\;\;\frac{\p^2(H_{\mathbf r}P_{\mathbf r})}{\p r_{i} \p r_{j}}(\eta)=0,\;\;\;\;\;\frac{\p^2\beta_{l,\ell;r_{\ell}}}{\p r_{i} \p r_{j}}=\frac{\p^2\beta_{l,\ell;2}}{\p r_{i} \p r_{j}}=0,$$$$\frac{\p\beta_{l,j,r_{j}}}{\p r_{i}}=\frac{\p\beta_{l,j,2}}{\p r_{i}}=0,\;\;\;supp \frac{\p\beta_{l,i,r_{i}}}{\p r_{i}}\subset V_{i},\;\;\;\;supp \frac{\p\beta_{l,i,2}}{\p r_{i}}\subset V_{i}.
$$
In the following we assume that  $1\leq i\neq j\leq \mathfrak{e}.$
Let $(\kappa,h)=(\kappa,h_{1},\cdots,h_{\iota})=Q_{(\kappa_{o},b_{o})}(H_{\mathbf r}P_{\mathbf r})(\eta) .$ Then we have $\frac{\p^2 \kappa}{\p r_{i}\p r_{j}}=0$ and $\frac{\p^2 h}{\p r_{i}\p r_{j}}=0.$ It follows that
$$
\frac{\p^2 h^{*}_{\mathbf r}}{\p r_{i} \p r_{j}}=0,\;\;\;\; \frac{\p^2(H_{\mathbf r}D(\kappa_{\mathbf r},h_{(\mathbf r)})) }{\p r_{i} \p r_{j}}= \frac{\p^2(H_{\mathbf r}D(\kappa_{\mathbf r},h_{(\mathbf r)})) }{\p r_{i} \p r_{j}}=0
$$
Let $h_{l}^{i}=h|_{D^{i}_{l}(R_{0})},l=1,2.$ Then $(h_{1}^{i},h^{i}_{2})$ is the restriction of $h$ near the node $q_{i}$. Obviously, $(Q^{\prime})^{*}P_{\mathbf r}(\eta)|_{D^{i}}=(\kappa,\beta_{1,{i},r_{i}}h_{1}^{i},\beta_{2,{i},r_{i}}h_{2}^{i}).$ Taking the derivative $\frac{\p^2}{\p r_{i}\p r_{j}}$ of $(Q^{\prime})^{*}P_{\mathbf r}$ we obtain
\begin{align*}
\left.\frac{\p}{\p r_{j}}((Q^{\prime})^{*} P_{\mathbf r}) (\eta)\right|_{V_{i}}&=\left. \left(0, \beta_{1,{i},r_{i}}\frac{\p h_{1}^{i}}{\p r_{j}}, \beta_{2,{i},r_{i}}\frac{\p h_{2}^{i}}{\p r_{j}}\right)\right|_{V_{i}},\\
\left.\frac{\p^2}{\p r_{i}\p r_{j}}((Q^{\prime})^{*} P_{\mathbf r}) (\eta)\right|_{D^{\ell}}&=\left.\delta_{\ell,i}\left(0,\frac{\p\beta_{1,{i},r_{i}}}{\p r_{i}}\frac{\p h_{1}^{i}}{\p r_{j}},\frac{\p\beta_{2,{i},r_{i}}}{\p r_{i}}\frac{\p h_{2}^{i}}{\p r_{j}}\right)\right|_{D^{\ell}}+\left.\delta_{\ell,j}\left(0,\frac{\p\beta_{1,{j},r_{j}}}{\p r_{j}}\frac{\p h_{1}^{j}}{\p r_{i}},\frac{\p\beta_{2,{j},r_{j}}}{\p r_{j}}\frac{\p h_{2}^{j}}{\p r_{i}}\right)\right|_{D^{\ell}}.
\end{align*}
Applying \eqref{f_tube_estimate_curve} of Lemma \ref{tube_ker_c-1} we have
\begin{align} \label{sec_app_right}
&\left\|\left.\frac{\p}{\p r_{j}} ((Q^{\prime})^{*}P_{\mathbf r})(\eta)\right|_{V_{i}}\right\|_{k-1,2,\alpha} + \left\|\frac{\p^2}{\p r_{i}\p r_{j}}((Q^{\prime})^{*} P_{\mathbf r}) (\eta)\right\|_{k-2,2,\alpha} \\
\leq &Ce^{-\frac{(\fc-\alpha)r_{j}}{4}} \left\|\eta|_{V_{i}}\right\|_{k-1,2,\alpha}
+Ce^{-\frac{(\fc-\alpha)r_{i}}{4}} \left\|\eta|_{V_{j}}\right\|_{k-1,2,\alpha}.\nonumber
\end{align}
Similar  we  obtain that
\begin{align} \label{sec_dq}
&\left\|\left.\frac{\p}{\p r_{j}}\left(H_{\mathbf r}  (DQ') ^{-1} P_{\mathbf r}\right)(\eta)\right|_{V_{i}}\right\|_{k-1,2,\alpha} + \left\|\frac{\p^2}{\p r_{i}\p r_{j}}\left(H_{\mathbf r}  (DQ') ^{-1} P_{\mathbf r}\right) (\eta)\right\|_{k-2,2,\alpha} \\
\leq &Ce^{-\frac{(\fc-\alpha)r_{j}}{4}} \left\|\eta|_{V_{i}}\right\|_{k-1,2,\alpha}
+Ce^{-\frac{(\fc-\alpha)r_{i}}{4}} \left\|\eta|_{V_{j}}\right\|_{k-1,2,\alpha}.\nonumber
\end{align}
and
\begin{align*}
& \left\|\left.\frac{\p}{\p r_{j}}I^*_{\mathbf r}(\kappa,h+\hat h_{0})\right|_{V_{i}}
\right\|_{k-1,2,\alpha} +\left\|\frac{\p^2}{\p r_{i}\p r_{j}}I^*_{\mathbf r}(\kappa,h+\hat h_{0})
\right\|_{k-2,2,\alpha}
\\
\leq &\mathsf C\left( e^{-\frac{(\fc-\alpha)r_{j}}{4}}\|h|_{V_{i}}\|_{k  ,2,\alpha}+e^{-\frac{(\fc-\alpha)r_{i}}{4}}\|h|_{V_{j}}\|_{k  ,2,\alpha}\right)+\mathsf C e^{(\fc-\alpha)\frac{r_{i}+r_{j}}{2}} |\hat h_0|.
\end{align*}
Note that, restricting in $V_{i},$
$$
\tilde{\nabla}_{\frac{\p}{\p r_{j}}}\tilde \xi_{(\mathbf r)}=\phi_{\mathbf r}\tilde{\nabla}_{\frac{\p}{\p r_{j}}}  \xi^{*}_{(\mathbf r)},\;\;\;\;\;\frac{\p u_{(\mathbf r)}}{\p r_{j}}=0.
$$
Then by the same calculation of Lemma \ref{difference} we have
\begin{align}\label{eqn_diff_rest}
\left\|\left.H_{\mathbf r}\circ D(Id, \phi_{\mathbf r})\circ \frac{\partial}{\partial r_{j}}\left(\kappa^*_{\mathbf r}, \xi^*_{(\mathbf r)}\right)\right|_{V_{i}}\right\|_{k-1,2,\alpha}
\leq \mathsf{C}  \mathsf {d}\left\|\left.\frac{\partial}{\partial r_{j}}\left(\kappa^*_{\mathbf r}, \xi^*_{(\mathbf r)}\right)\right|_{V_{i}}\right\|_{k-1,2,\alpha}.
\end{align}
Using \eqref{sec_app_right}, \eqref{sec_dq}, \eqref{eqn_diff_rest} and   the same proof of Theorem \ref{lem_est_rI} word by word, we have
\begin{equation}\label{eqn_est_sol_rest}
\left\|\frac{\partial }{\partial r_{i}}\left.\left(\kappa^*_{\mathbf r}, \xi^*_{(\mathbf r)}\right)\right|_{V_{j}} \right\|_{k-1,2,\alpha}+\left\|\left.  \frac{\p(H_{\mathbf r} f_{(\mathbf r)}I_{\mathbf r}(\kappa,\zeta))}{\p r_{i}}\right|_{V_{j}}\right\|_{k-2,2,\alpha}\leq Ce^{-(\fc-5\alpha)\frac{r_{i}+r_{j}}{4}}.
\end{equation}
Using \eqref{eqn_est_sol_rest}, Theorem  Theorem \ref{lem_est_rI} and the Cauchy-Schwarz inequality, by the same argument of Lemma \ref{difference}, we have
\begin{align}\label{sec_est_diff}
 \left\|H_{\mathbf r} D(Id, \phi_{\mathbf r})\circ  \frac{\p^2}{\p r_{i}\p r_{j}} \left(\kappa^*_{\mathbf r}, \xi^*_{(\mathbf r)}\right)\right\|_{k-2,2,\alpha}
\leq  \mathsf{C}  \mathsf {d}\left\|\frac{\p^2}{\p r_{i}\p r_{j}}\left(\kappa^*_{\mathbf r}, \xi^*_{(\mathbf r)}\right)\right\|_{k-2,2,\alpha}+\mathsf{C} e^{-(\fc-5\alpha)\frac{r_{i}+r_{j}}{4}}.
\end{align}
Taking the derivative $\frac{\p^2}{\p r_{i}\p r_{j}}$ of  \eqref{kernel_express} and multiplying $H_{\mathbf r} D(Id,\phi_{r})$ on both sides  we get
\begin{align*}
&H_{\mathbf r} D(Id,\phi_{\mathbf r})\circ\frac{\p^2(\kappa^*_{\mathbf r}, \xi^*_{(\mathbf r)})}{\p r_{i}\p r_{j}} \\
= &H_{\mathbf r} D(Id,\phi_{\mathbf r})\circ\frac{\p^2I^*_{\mathbf r}(\kappa,\zeta) }{\partial r_{i}\p r_{j}} + \frac{\p^2(Q^* P_{\mathbf r} )}{\p r_{i}\p r_{j}} \circ H_{\mathbf r} f_{(\mathbf r)}I_{\mathbf r}(\kappa,\zeta)  + H_{\mathbf r} P_{\mathbf r} \frac{\p^2(H_{\mathbf r} f_{(\mathbf r)}I_{\mathbf r}(\kappa,\zeta))}{\p r_{i}\p r_{j}} \\
&+ H_{\mathbf r} D(Id,\phi_{\mathbf r})\circ\frac{\p (Q^* P_{\mathbf r})}{\p r_{i}}\frac{\p(H_{\mathbf r} f_{(\mathbf r)}I_{\mathbf r}(\kappa,\zeta))}{\p r_{j}}+H_{\mathbf r} D(Id,\phi_{\mathbf r})\circ \frac{\p (Q^* P_{\mathbf r})}{\p r_{j}}\frac{\p(H_{\mathbf r} f_{(\mathbf r)}I_{\mathbf r}(\kappa,\zeta))}{\p r_{i}}  .
\end{align*}
Using \eqref{operator_partial_right_inverse_kernel}, \eqref{sec_est_diff}, $\frac{\p(H_{\mathbf r}P_{\mathbf r})}{\p r_{j}}\subset V_{j}$, Lemma \ref{lem_est_f_r} and  $$\frac{\partial}{\partial r_{j}}(H_{\mathbf r}f_{(\mathbf r)}(I_{\mathbf r}(\kappa,\zeta)))=\frac{\p(H_{\mathbf r}P_{\mathbf r})}{\p r_{j}}\circ H_{\mathbf r}f_{(\mathbf r)}(I_{\mathbf r}(\kappa,\zeta))+H_{\mathbf r}P_{\mathbf r}\frac{\partial}{\partial r_{j}}(H_{\mathbf r}f_{(\mathbf r)}(I_{\mathbf r}(\kappa,\zeta))),$$ by the same  argument of \eqref{eqn_B_1} we get
$$
\left\|\frac{\p (Q^* P_{\mathbf r})}{\p r_{i}}\frac{\p(H_{\mathbf r} f_{(\mathbf r)}I_{\mathbf r}(\kappa,\zeta))}{\p r_{j}}\right\|_{k-2,2,\alpha}\leq Ce^{-(\fc-5\alpha)\frac{r_{i}+r_{j}}{4}}.
$$
Then repeating the proof of  \eqref{estimate_operator_partial_I} we have
\begin{equation*}
\left\| H_{\mathbf r} P_{\mathbf r} \frac{\p^2(H_{\mathbf r} f_{(\mathbf r)}I_{\mathbf r}(\kappa,\zeta))}{\p r_{i}\p r_{j}}\right\|_{k-2,2,\alpha}\leq C  e^{-(\mathfrak{c} -5\alpha)\frac{r_{i}+r_{j}}{4} }  +C\mathsf{d}\left\| \frac{\partial^2}{\p r_{i}\p r_{j}}\left(\kappa^*_{\mathbf r}, \xi^*_{(\mathbf r)}\right)\right\|_{k-2,2,\alpha}.
\end{equation*}
Then as in the proof of \eqref{exp_ker_coord_est} we conclude that
\begin{equation*}
\left\|\frac{\partial^2}{\p r_{i}\p r_{j}}\left(\kappa^*_{\mathbf r}, \xi^*_{(\mathbf r)}\right)\right\|_{k-2,2,\alpha } \leq Ce^{-(\mathfrak{c} -5\alpha)\frac{r_{i}+r_{j}}{4} } +C\mathsf{d} \left\|\frac{\partial^2}{\p r_{i}\p r_{j}}\left(\kappa^*_{\mathbf r}, \xi^*_{(\mathbf r)}\right)\right\|_{k-2,2,\alpha }. \end{equation*}
Choose  $\mathsf{d}$   small  such that $4C\mathsf{d}<1.$ Since  $$I^*_{\mathbf r}(\kappa,\zeta)+Q^*_{( \kappa_{o},b_{(\mathbf r)})}f_{(\mathbf r)}(I_{\mathbf r}(\kappa,\zeta))=I_{\mathbf r}(\kappa,\zeta)+Q_{( \kappa_{o},b_{(\mathbf r)})}f_{(\mathbf r)}(I_{\mathbf r}(\kappa,\zeta))\;\;\mbox{ on }\;\Sigma(R_0)$$
Theorem \ref{coordinate_decay-2} holds.\;\;\;$\Box$

\section{Appendix}
\subsection{Linearized operator }\label{Linearized operator}
Choose local normal coordinates $(x^{1},\cdots,x^{2m})$ in a neighborhood  $O_{u(q)}$ of $u(q)$ such that
$$
(x^{1},\cdots,x^{2m})(u(q))=0,\;\;\;\;\;\left.J\frac{\p}{\p x^{i}}\right|_{0}=\left.\frac{\p}{\p x^{m+i}}\right|_{0},\;\;\;\;\;\;\;
\left.J\frac{\p}{\p x^{m+i}} \right|_{0}=\left.-\frac{\p}{\p x^{i}}\right|_{0},\;\;\;\;\;\;i\leq m.
$$
For any $h\in W^{k,2}(\Sigma, u^{*}TM)$ we can write
$h=\sum_{i=1}^{2m} h^{i}\frac{\p }{\p x^{i}},$ with $h^{i}\in W^{k,2}(\Sigma, \mathbb R).$
For fixed $j,$ denote by $D_{u}^{(j)}$ the linearized operator of $\bar{\p}_{j,J}$ at $u.$  Let $(s,t)$ be the local coordinates on $\Sigma$ with $j\frac{\p}{\p s}=\frac{\p }{\p t}.$
 Since
\begin{equation}\label{eqn_line_j}
D_{u}^{(j)} h=\frac{1}{2}\left(\nabla h+J(u)\nabla h \circ j    \right)-\frac{1}{4} J(u)\nabla_{h}J\left(du-J(u)du \circ j    \right)
\end{equation}
we have
\begin{align*}
&D_{u}^{(j)}h\left(\frac{\p}{\p s}\right)=\frac{1}{2}\sum_{i=1}^{2m}\left(\frac{\p h^{i}}{\p s} +J_{0}\frac{\p h^{i}}{\p t}   \right)\frac{\p }{\p x^{i}} -\frac{1}{4} J(u)\nabla_{h}J\left(\frac{\p u^{i}}{\p s} -J(u)\frac{\p u^{i}}{\p t}  \right)\frac{\p }{\p x^{i}} \\
&+\frac{1}{2}\sum_{i,k=1}^{2m} h^{i}\left(\frac{\p u^{k}}{\p s}\nabla_{ \frac{\p }{\p x^{k}}} \frac{\p }{\p x^{i}}+J(u)\frac{\p u^{k}}{\p t}\nabla_{ \frac{\p }{\p x^{k}}} \frac{\p }{\p x^{i}}  \right)+\frac{1}{2}\sum_{i=1}^{2m}(J(u(q)-J_{0})\frac{\p h^{i}}{\p t}\frac{\p }{\p x^{i}}
\end{align*}
Let $(J_{i}^{k})$  be the matrix such that  $\sum_{k=1}^{2m} J_{i}^{k}\frac{\p}{\p x^{k}}:=J(\frac{\p}{\p x^{i}})$.
Let $\Gamma^{k}_{il}$ be the Christoffel symbol of a connection $\nabla$ with respect to a local frame $(\frac{\p}{\p x^{i}}),$ i.e., $\nabla_{ \frac{\p }{\p x^{k}}} \frac{\p }{\p x^{i}}=\sum_{l=1}^{2m} \Gamma_{ki}^{l}\frac{\p }{\p x^{l}}$. Then we can write $D_{u}h\left(\frac{\p}{\p s}\right)$ as
\begin{align*}
&2D_{u}^{(j)}h\left(\frac{\p}{\p s}\right)= \sum_{i=1}^{2m}\left(\frac{\p h^{i}}{\p s} +J_{0}\frac{\p h^{i}}{\p t}   \right)\frac{\p }{\p x^{i}} -\frac{1}{2} \sum_{i,k,l,a=1}^{2m} h^{l}J_{k}^{i}(u)\left(\nabla_{\frac{\p}{\p x^{l}}}J\right)_{a}^{k}\left(\frac{\p u^{a}}{\p s} -\sum_{e=1}^{2m}J^{a}_{e}(u)\frac{\p u^{e}}{\p t}  \right)\frac{\p }{\p x^{i}} \\
&+ \sum_{i,k,l=1}^{2m} h^{l}\left(\frac{\p u^{k}}{\p s}\Gamma_{kl}^{i}+ \sum_{a=1}^{2m}  J_{a}^{i}(u)\frac{\p u^{k}}{\p t}\Gamma_{kl}^{a} \right)\frac{\p }{\p x^{i}}+ \sum_{i,k=1}^{2m}(J_{l}^{i}(u)-(J_{0})_{l}^{i})\frac{\p h^{l}}{\p t}\frac{\p }{\p x^{i}}.
\end{align*}
$2D_{u}^{(j)}h\left(\frac{\p}{\p s}\right)$ may simply be written as $D_{u}^{(j)}h$ when no amlargeuity can arise.
In the matrix form,  $D_u^{(j)}$ can be written as
$$
D_u^{(j)} \begin{pmatrix} h^1 \\
     \vdots \\ h^{2m}
     \end{pmatrix}
  =  \frac{\partial  }{\partial s}\begin{pmatrix} h^1 \\
     \vdots \\
     h^{2m}
     \end{pmatrix}
   +J_{0}\frac{\partial }{\partial t}\begin{pmatrix} h^1 \\
     \vdots \\
     h^{2m}
     \end{pmatrix}
    +F^{1,(j)}_{u} \begin{pmatrix} h^1 \\
     \vdots \\
     h^{2m}
     \end{pmatrix}+F^{2,(j)}_{u}\frac{\partial }{\partial t} \begin{pmatrix} h^1 \\
     \vdots \\
     h^{2m}
     \end{pmatrix}
$$
where   $F_{u}^{1,(j)},F_{u}^{2,(j)}$ are matrixs given by
\begin{align}\label{expression_of_S}
(F_{u}^{1,(j)})_{l}^{i}&=\sum_{k=1}^{2m} \left(\frac{\p u^{k}}{\p s}\Gamma_{kl}^{i}+ \sum_{a=1}^{2m}  J_{a}^{i}(u)\frac{\p u^{k}}{\p t}\Gamma_{kl}^{a} \right)-\frac{1}{2} \sum_{k, a=1}^{2m} J_{k}^{i}(u)\left(\nabla_{\frac{\p}{\p x^{l}}}J\right)_{a}^{k}\left(\frac{\p u^{a}}{\p s} - \sum_{e=1}^{2m} J^{a}_{e}(u)\frac{\p u^{e}}{\p t}  \right)\\\label{expression_of_F}
(F_{u}^{2,(j)})_{l}^{i}&=J_{l}^{i}(u)-(J_{0})_{l}^{i}
\end{align}
If there is no confusion we  will denote $D_{u}^{(j)},F^{i,(j)}_{u}$ by $D_{u},F^{i}_{u}$, $i=1,2$.
We view $D_u$ as a operator
$D_u:W^{k,2,\alpha}(\Sigma, \mathbb R^{2m})\longrightarrow L^{k-1,2,\alpha}(\Sigma, \mathbb R^{2m}) $
\begin{equation}\label{loc_exp_Du}
D_u=\frac{\partial }{\partial s}+J_{0} \frac{\partial }{\partial t} + F_{u}^{1}+F_{u}^{2} \frac{\partial }{\partial t}.
\end{equation}
\v
\subsection{Implicit function theorem}\label{Implicit function theorem}

The following theorem is a restatement of Theorem A.3.3 and Proposition A.3.4 in \cite{MS}.

\begin{theorem}\label{details_implicit_function_theorem}
Let  $(A,\|\cdot\|_{A})$,  $(X,\|\cdot\|_{X})$ and $(Y,\|\cdot\|_{Y})$ be Banach spaces, $U\subset X$  be open sets and  $V\subset A$, $U\subset X$  be open sets and $F: V\times U\rightarrow Y$ be a continuously differentiable map.   For any $(a,x)\in V\times U$ define
$$
D_{a}F(a,x)(g)=\frac{d}{dt}F(a+tg ,x)|_{t=0},\;\;D_{x}F(a,x)(h)=\frac{d}{dt}F(a ,x+th)|_{t=0},\;\;\;\forall\;g\in A,\; h\in X.
$$
Suppose that   $D_{x}F(a_{o},x_{o})$  is surjective and has a bounded linear right inverse $Q_{(a_{o},x_{o})}:Y\longrightarrow X$ with $\|Q_{(a_{o},x_{o})}\|\leq \mathsf C$ for some constant $\mathsf C>0$.  Choose a positive constant $\delta >0$  such that
\begin{equation}\label{c_of_differential}
\|D_{x}F(a,x)- D_{x}F(a_{o},x_{o})\|\leq \frac{1}{2\mathsf C},\;\;\;\;\forall \;x\in B_{\delta}(x_{o},X),\;a\in B_{\delta}(a_{o},A).
\end{equation}
where $B_{\delta}(a_{o},A)=\{a\in A|\; \|a-a_{o}\|_{A} \leq \delta\},B_{\delta}(x_{o},X)=\{x\in X|\; \|x-x_{o}\|_{X} \leq \delta\}.$ Suppose that $x_1\in X$ and $a\in  B_{\delta}(a_{o},A)$ satisfies
\begin{equation}\label{small_value_of_F}
\|F(a,x_1)\|_{Y}<\frac{\delta}{4\mathsf C},\;\;\;\; \|x_{1}-x_{o}\|_{X}\leq \frac{\delta}{8}.
\end{equation}
Then there exists a unique $x\in X$ such that
\begin{equation}
F(a,x)=0,\;\;\;\;x-x_1\in \im\; Q,\;\;\;\;\|x-x_{o}\|_{X}\leq \delta,\;\;\;\; \|x-x_{1}\|_{X}\leq 2\mathsf C\|F(a,x_{1})\|_{Y}.
\end{equation}
Moreover, if $\|F(a_{o},x_{o})\|_{Y}\leq \frac{\delta}{4\mathsf C},$ there exist a constant $\delta'>0$ and a unique family differential map  $f_{a}:\ker D_{x}F(a_{o},x_{o})\rightarrow Y$ such that for any $(a,x)\in F^{-1}(0)\cap (B_{\delta'}(a_{o},A)\times B_{\delta'}(x_{o},X)),$ we have
 \begin{equation}\label{eqn_implicit_F}
 F(a,x)=0\Longleftrightarrow
 x=x_{o}+\zeta + Q_{(a_{o},x_{o})}\circ f_{a}(\zeta),\;\;\; \zeta \in \ker\;  D_{x}F(a_{o},x_{o})
 \end{equation}
 \end{theorem}
The following is a version of the implicit function theorem with parameters. For the proof please see \cite{LS-2}.
\begin{theorem}\label{smooth_implicit_function_theorem}
$F$ satisfies the assumption of Theorem \ref{details_implicit_function_theorem}.
   If $F:V\times U\longrightarrow Y$ is of class $C^{\ell}$, where $\ell$ is a positive integer, then  there exists a constant $\delta'>0$ such that  $F^{-1}(0)|_{B_{\delta'}(a_{o},A)\times  B_{\delta'}(x_{o},X)}$ is $C^{\ell}$ manifold,  and $\xi\rightarrow x_{o} + \xi + Q \circ f_{a}(\xi)$ is a $C^{\ell}$-chart of  $F^{-1}(0)|_{B_{\delta'}(a_{o},A)\times  B_{\delta'}(x_{o},X)}$.
 In particular,
\begin{equation}\label{eqn_impli_est_F}
\|D_{a} \left(x_{o}+\zeta + Q_{(a_{o},x_{o})}\circ f_{a}(\zeta)\right)\|\leq C,
\end{equation}
where $C>0$ is a constant depending only on $C_{1}$, $\mathsf C,\delta'$, $\|f_a\|$ and  $\|D^2_{ax}F(a,x_{o})\|$.
 \end{theorem}

\subsection{An isomorphism between $u^{*}TM\otimes \wedge^{0,1}_{j}T^{*}\Sigma$ and $u^{*}TM\otimes \wedge^{0,1}_{j_{o}}T^{*}\Sigma$ }\label{An isomorphism}
\v
Let $\mathcal{J}(\Sigma)\subset End(T\Sigma)$ denote the manifold of complex structures on $\Sigma$ and $j_o\in \mathcal{J}(\Sigma)$. For any $j\in \mathcal{J}(\Sigma)$ near $j_o$
we can write $j=(I + H)j_o(I + H)^{-1}$ where $H\in T_{j_o}\mathcal{J}(\Sigma)$.
We define two maps
$$\Psi_{j_{o},j}:u^{*}TM\otimes \wedge^{0,1}_{j_{o}}T^{*}\Sigma  \to  u^{*}TM\otimes \wedge^{0,1}_{j}T^{*}\Sigma$$ and $$\Psi_{j,j_{o}}:u^{*}TM\otimes \wedge^{0,1}_{j}T^{*}\Sigma  \to  u^{*}TM\otimes \wedge^{0,1}_{j_{o}}T^{*}\Sigma$$ by
\begin{align*}
 \Psi_{j_{o},j}(\eta)= \frac{1}{2}(\eta-\eta \cdot j_{o} j),\;\;\;\; \Psi_{j,j_{o}}(\varpi)= \frac{1}{2}(\varpi-\varpi \cdot jj_{o}).
\end{align*}
Since $J\eta=-\eta j_{o}$ and $J\varpi=-\varpi j,$ one can check that $J \Psi_{j_{o},j}(\eta) =-\Psi_{j_{o},j}(\eta) j$ and $J \Psi_{j,j_{o}}(\varpi) =-\Psi_{j,j_{o}}(\varpi) j_{o}.$ Then $\Psi_{j_{o},j}$ and $\Psi_{j,j_{o}}$ are well defined. 
\begin{lemma}\label{isomorphism}
$\Psi_{j_{o},j}$ is an isomorphism when $|H|$ small enough.
\end{lemma}
\v\n
{\bf Proof.} By the definition we have
\begin{align*}
\Psi_{j,j_{o}}\Psi_{j_{o},j}(\eta)=\frac{1}{4}\left(2\eta-\eta\cdot(jj_{o}+j_{o}j)\right) ).
\end{align*}
 A direct calculation gives us
$$ 1-C|H|\leq \|\Psi_{j,j_{o}}\Psi_{j_{o},j}\|\leq 1+C|H| $$
where $|H| =\sup\limits_{p\in \Sigma,X\in T_{p}\Sigma}\left\{   \left. (HX,X)_{\mathbf{g}_{o}(p)}\;\right| (X,X)_{\mathbf{g}_{o}(p)}=1\right\}.$
Then $\Psi_{j,j_{o}}\Psi_{j_{o},j}$ is an isomorphism as $|H|$ small enough. In particular, $\Psi_{j_{o},j}$ is injective and $\Psi_{j,j_{o}}$ is surjective.  Similarly  $\Psi_{j_{o},j}\Psi_{j,j_{o}}$ is also an isomorphism. Hence  $\Psi_{j_{o},j}$  and $\Psi_{j,j_{o}}$ are isomorphisms.
$\Box$
\v
$\Psi_{j_{o},j}$ induces an isomorphism  $$\Psi_{j_{o},j}: W^{k-1,2,\alpha}(\Sigma, u^{*}TM\otimes \wedge^{0,1}_{j_{o}}T^{*}\Sigma)\to W^{k-1,2,\alpha}(\Sigma, u^{*}TM\otimes \wedge^{0,1}_{j_{a}}T^{*}\Sigma)$$
in a natural way.

\end{document}